\newtheorem*{thm*}{Theorem}
\newtheorem{thm}{Theorem}[section]
\newtheorem{cor}[thm]{Corollary}
\newtheorem{lem}[thm]{Lemma}
\newtheorem{prop}[thm]{Proposition}
\theoremstyle{definition}
\newtheorem{defn}[thm]{Definition}
\theoremstyle{remark}
\newtheorem{rem}[thm]{Remark}
\numberwithin{equation}{section}
\DeclareSymbolFont{bbold}{U}{bbold}{m}{n}
\DeclareSymbolFontAlphabet{\mathbbold}{bbold}
\newcommand{\N}{\mathbb{N}}
\newcommand{\Z}{\mathbb{Z}}
\newcommand{\R}{\mathbb{R}}
\newcommand{\acts}{\curvearrowright}
\newcommand{\pmp}{p{$.$}m{$.$}p{$.$}}
\newcommand{\pv}{\bar{p}}
\newcommand{\qv}{\bar{q}}
\newcommand{\rv}{\bar{r}}
\newcommand{\cF}{\mathcal{F}}
\newcommand{\cJ}{\mathscr{I}}
\newcommand{\cK}{\mathscr{K}}
\newcommand{\cL}{\mathscr{L}}
\newcommand{\cP}{\mathcal{P}}
\newcommand{\cQ}{\mathcal{Q}}
\newcommand{\cR}{\mathcal{R}}
\newcommand{\E}{\mathscr{E}}
\newcommand{\dist}{\mathrm{dist}}
\newcommand{\dom}{\mathrm{dom}}
\newcommand{\rng}{\mathrm{rng}}
\newcommand{\symd}{\triangle}
\newcommand{\salg}{\sigma \text{-}\mathrm{alg}}
\newcommand{\ralg}{\sigma \text{-}\mathrm{alg}^{\mathrm{red}}}
\newcommand{\sH}{\mathrm{H}}
\newcommand{\dB}{d} %Metric on space of Borel sets or Borel partitions
\newcommand{\dR}{d^{\mathrm{Rok}}} %Rokhlin metric on partitions
\newcommand{\Prt}{\mathscr{P}} %Space of partitions
\newcommand{\HPrt}{\mathscr{P}_\sH} %Rokhlin space of partitions
\newcommand{\rh}{h^{\mathrm{Rok}}}
\newcommand{\suprh}[1]{h^{\mathrm{Rok}}_{sup}(#1)}
\newcommand{\Borel}{\mathcal{B}}
\newcommand{\given}{\mathbin{|}}
\newcommand{\Given}{\Big|}
\renewcommand{\:}{\,:\,}
\newcommand{\res}{\restriction}
\begin{document}

\title[Krieger's finite generator theorem for countable groups II]{Krieger's finite generator theorem for actions of countable groups II}
%Krieger's finite generator theorem for ergodic actions of countable groups
%Krieger's finite generator theorem for countable groups and the structure of Bernoulli shifts
%Krieger's finite generator theorem and the structure of Bernoulli shifts for countable groups
\author{Brandon Seward}
\address{Department of Mathematics, University of Michigan, 530 Church Street, Ann Arbor, MI 48109, U.S.A.}
\email{b.m.seward@gmail.com}
\keywords{Bernoulli shift, isomorphism, entropy, sofic entropy, generating partition, generator, Gottschalk's surjunctivity conjecture, Kaplansky's direct finiteness conjecture}
\subjclass[2010]{37A35, 37A15}
%\subjclass{37A35} %Entropy and other invariants, isomorphism, classification
%\subjclass{37A15} %General groups of measure-preserving transformations

\begin{abstract}
We continue the study of Rokhlin entropy, an isomorphism invariant for {\pmp} actions of countable groups introduced in the previous paper. We prove that every free ergodic action with finite Rokhlin entropy admits generating partitions which are almost Bernoulli, strengthening the theorem of Ab\'{e}rt--Weiss that all free actions weakly contain Bernoulli shifts. We then use this result to study the Rokhlin entropy of Bernoulli shifts. Under the assumption that every countable group admits a free ergodic action of positive Rokhlin entropy, we prove that: (i) the Rokhlin entropy of a Bernoulli shift is equal to the Shannon entropy of its base; (ii) Bernoulli shifts have completely positive Rokhlin entropy; and (iii) Gottschalk's surjunctivity conjecture and Kaplansky's direct finiteness conjecture are true.
\end{abstract}
\maketitle

\section{Introduction}

Let $(X, \mu)$ be a standard probability space, meaning $X$ is a standard Borel space and $\mu$ is a Borel probability measure. Let $G$ be a countable group and let $G \acts (X, \mu)$ be a probability-measure-preserving (\pmp) action. For a collection $\mathcal{C}$ of Borel subsets of $X$, we let $\salg_G(\mathcal{C})$ denote the smallest $G$-invariant $\sigma$-algebra containing $\mathcal{C} \cup \{X\}$ and the null sets. A Borel partition $\alpha$ is \emph{generating} if $\salg_G(\alpha)$ is the entire Borel $\sigma$-algebra $\Borel(X)$. For finite $T \subseteq G$ we write $\alpha^T$ for the join of the translates $t \cdot \alpha$, $t \in T$, where $t \cdot \alpha = \{t \cdot A : A \in \alpha\}$. The \emph{Shannon entropy} of a countable Borel partition $\alpha$ is
$$\sH(\alpha) = \sum_{A \in \alpha} - \mu(A) \cdot \log(\mu(A)).$$
A \emph{probability vector} is a finite or countable ordered tuple $\pv = (p_i)$ of positive real numbers which sum to $1$. We write $|\pv|$ for the length of $\pv$ and $\sH(\pv) = \sum - p_i \cdot \log(p_i)$ for the Shannon entropy of $\pv$.

In Part I of this series \cite{S14}, we defined the \emph{Rokhlin entropy} of a {\pmp} action $G \acts (X, \mu)$ as
$$\rh_G(X, \mu) = \inf \Big\{ \sH(\alpha \given \cJ) : \alpha \text{ is a countable partition and } \salg_G(\alpha) \vee \cJ = \Borel(X) \Big\},$$
where $\cJ$ is the $\sigma$-algebra of $G$-invariant sets. In this paper, we will only be interested in ergodic actions, in which case the Rokhlin entropy simplifies to
$$\rh_G(X, \mu) = \inf \Big\{ \sH(\alpha) : \alpha \text{ is a countable generating partition} \Big\}.$$
When $G$ is amenable and the action is free, the Rokhlin entropy coincides with classical Kolmogorov--Sinai entropy \cite{ST14,AS}. Rokhlin entropy is thus a natural analog of classical entropy. The main theorem of the prequel was the following generalization of Krieger's finite generator theorem.

\begin{thm}[\cite{S14}] \label{intro:krieger}
Let $G$ be a countably infinite group acting ergodically, but not necessarily freely, by measure-preserving bijections on a non-atomic standard probability space $(X, \mu)$. If $\pv = (p_i)$ is any finite or countable probability vector with $\rh_G(X, \mu) < \sH(\pv)$, then there is a generating partition $\alpha = \{A_i \: 0 \leq i < |\pv|\}$ with $\mu(A_i) = p_i$ for every $0 \leq i < |\pv|$.
\end{thm}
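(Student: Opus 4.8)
The plan is to reduce the statement to a coding problem and then solve it by a limiting construction. Since the Rokhlin entropy is an infimum and $\rh_G(X,\mu) < \sH(\pv)$, there is a countable Borel generating partition $\beta$ with $\sH(\beta) < \sH(\pv)$ (in particular $\sH(\beta) < \infty$). I claim it suffices to produce a Borel partition $\alpha = \{A_i : 0 \le i < |\pv|\}$ with $\mu(A_i) = p_i$ for all $i$ and with every cell of $\beta$ lying in $\salg_G(\alpha)$ modulo null sets. Indeed, $\salg_G(\alpha)$ is by definition a $G$-invariant $\sigma$-algebra containing the null sets, so if it contains each cell of $\beta$ it contains the smallest such $\sigma$-algebra over $\beta$, namely $\salg_G(\beta) = \Borel(X)$; hence $\alpha$ is generating. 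Thus the whole problem becomes: encode the generator $\beta$ into a partition whose marginal is \emph{exactly} $\pv$, spending only the entropy surplus $\sH(\pv) - \sH(\beta) > 0$.

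For the construction I would build $\alpha$ as a $d$-limit of Borel partitions $\alpha_n$, where $d(\zeta,\zeta') = \sum_i \mu(Z_i \symd Z_i')$ is the usual metric on ordered partitions indexed by $\{0 \le i < |\pv|\}$, each $\alpha_n$ having marginal exactly $\pv$. Fix finite partitions $\gamma_1 \le \gamma_2 \le \cdots$ with $\gamma_n \le \beta$ and $\bigvee_n \gamma_n = \beta$. Iterating over all pairs $(n,\ell)$, at a corresponding stage $s = s(n,\ell)$ I would arrange a finite window $T \subseteq G$ and an $\alpha_s^{T}$-measurable partition approximating $\gamma_n$ to within $2^{-\ell}$, and there commit the entire future perturbation $\sum_{m \ge s} d(\alpha_m,\alpha_{m+1})$ to be at most $2^{-\ell}/(1+|T|)$; this is legitimate because at any stage only the finitely many windows used so far constrain the future. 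Since altering a partition on a set of measure $\eta$ changes its $T$-window on a set of measure at most $|T|\,\eta$, in the limit $\gamma_n$ is approximated to within $2^{1-\ell}$ by a $\salg_G(\alpha)$-measurable partition; letting $\ell \to \infty$ forces $\gamma_n \in \salg_G(\alpha)$ for every $n$, whence $\beta \subseteq \salg_G(\alpha)$. Keeping the marginal exactly $\pv$ at each stage is possible because $(X,\mu)$ is non-atomic — prescribed cell measures can be realized, and mass transferred between cells across sets of arbitrarily small measure — and, the marginal being $d$-continuous, the limit $\alpha$ again has marginal exactly $\pv$.

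The heart of the matter, and the step I expect to be the main obstacle, is the single encoding move: given $\alpha_s$ with marginal $\pv$ and a finite $\gamma \le \beta$, produce a nearby partition of marginal $\pv$ together with a finite window $T$ whose $T$-join recovers $\gamma$ up to an arbitrarily small error. For $\Z$, or an amenable $G$, this is exactly where the entropy gap is spent: one invokes the Rokhlin lemma to carve out tall tower columns, uses the Shannon--McMillan--Breiman theorem to see that the $\gamma$-names along a column concentrate on roughly $e^{N\sH(\gamma)}$ typical strings, and paints the columns with codewords drawn from the $\approx e^{N\sH(\pv)}$ strings of letter-frequency $\pv$, the inequality $\sH(\pv) > \sH(\gamma)$ furnishing enough codewords to encode injectively while forcing the global letter frequencies to be exactly $\pv$. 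For a general, possibly non-amenable, countable $G$ there is no F\o lner sequence, no tower decomposition, and no pointwise ergodic or SMB theorem, so this device must be replaced by a substitute that simultaneously (i) encodes $\gamma$ through a finite window, (ii) respects the exact global marginal $\pv$, and (iii) is organized along some complete-section or tiling-like structure extracted from ergodicity alone. Reconciling the exact-marginal constraint (ii) with the local injective encoding (i) — the precise place where the surplus $\sH(\pv)-\sH(\beta)$ is consumed — is, I expect, the crux of the whole argument and the genuine new input beyond the amenable theory. Finally, the scheme is uniform in whether $\pv$ is finite or countable: in the countable case one truncates $\pv$ to finitely many coordinates, encodes there, and lets the truncation exhaust $\pv$, with $\sum_i -p_i\log p_i < \infty$ ensuring the tails cost negligible entropy.
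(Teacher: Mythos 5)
Your reduction and your limiting framework are fine as far as they go: it does suffice to produce $\alpha$ with $\dist(\alpha) = \pv$ and $\beta \subseteq \salg_G(\alpha)$ for some generating $\beta$ with $\sH(\beta) < \sH(\pv)$, and the device of committing future perturbations so that a $d$-limit of partitions with exact marginal $\pv$ still codes each $\gamma_n$ is a standard and correct piece of bookkeeping. But you have not proved the theorem, and you say so yourself: the ``single encoding move'' --- perturbing a partition of exact distribution $\pv$ on a small set so that finitely (or countably) many of its translates recover a given finite $\gamma \leq \beta$ up to small error, using only the surplus $\sH(\pv) - \sH(\beta) > 0$ --- is the entire content of the theorem for a general countable group. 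Everything else in your write-up is scaffolding around that step. Identifying the crux and observing that the Rokhlin lemma and Shannon--McMillan--Breiman are unavailable is not the same as supplying a substitute; as written, the proposal reduces the theorem to itself.

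For the record, the present paper does not prove Theorem \ref{intro:krieger} at all: it is imported from Part I \cite{S14}, and the version actually used here is Theorem \ref{thm:relbasic}, which produces pre-partitions of prescribed measures relative to a $\sigma$-algebra $\cF$. The proof in Part I is not organized around finite windows $\alpha^T$ as your sketch is. It works almost entirely inside the pseudo-group $[[E_G^X]]$ of the orbit equivalence relation, transporting information along orbits via $\cF$-expressible elements of the pseudo-group (the notions recalled in Section \ref{sec:approx} here), with the role of the tower decomposition played by Borel complete sections obtained from Lemma \ref{lem:marker} (Kechris--Solecki--Todorcevic \cite{KST99}) and the role of SMB played by a counting argument for names along orbit segments; the exact-marginal constraint is handled by building pre-partitions with prescribed cell measures and extending them, which is why Theorem \ref{thm:relbasic} is stated in terms of $\ralg_G(\alpha)$. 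This difference is not cosmetic: the paper itself remarks, after Theorem \ref{intro:bkrieger}, that because the Part I construction lives in the pseudo-group it is ``a bit unexpected'' that one can additionally control the joint distribution of the $T$-translates of the generator --- i.e.\ the finite-window structure your plan presupposes is precisely what the actual proof does not provide for free, and recovering it is the new content of Section \ref{sec:approx} of this paper. So the genuine gap is the non-amenable coding step; until you construct a replacement for the tower-plus-SMB mechanism, the proof is incomplete.
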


In the present paper we use the above theorem to study the Rokhlin entropy of Bernoulli shifts. Recall that for a standard probability space $(L, \lambda)$ the \emph{Bernoulli shift} over $G$ with \emph{base space} $(L, \lambda)$ is simply the product space $(L^G, \lambda^G)$ equipped with the natural left-shift action of $G$:
$$\text{for } g, h \in G \text{ and } x \in L^G \quad (g \cdot x)(h) = x(g^{-1} h).$$
The \emph{Shannon entropy} of the base space is
$$\sH(L, \lambda) = \sum_{\ell \in L} - \lambda(\ell) \cdot \log \lambda(\ell)$$
if $\lambda$ has countable support, and $\sH(L, \lambda) = \infty$ otherwise. Every Bernoulli shift $(L^G, \lambda^G)$ comes with the canonical, possibly uncountable, generating partition $\cL = \{R_\ell : \ell \in L\}$, where
$$R_\ell = \{x \in L^G : x(1_G) = \ell\}.$$
Note that if $\sH(L, \lambda) < \infty$ then $\cL$ is countable and $\sH(\cL) = \sH(L, \lambda)$. Thus one always has $\rh_G(L^G, \lambda^G) \leq \sH(L, \lambda)$.

A fundamental open problem in ergodic theory is to determine, for every countably infinite group $G$, whether $(2^G, u_2^G)$ can be isomorphic to $(3^G, u_3^G)$. Here we write $n$ for $\{0, \ldots, n-1\}$ and $u_n$ for the normalized counting measure on $\{0, \ldots, n-1\}$. Note that $\sH(n, u_n) = \log(n)$. For amenable groups $G$, the Bernoulli shift $(L^G, \lambda^G)$ has Kolmogorov--Sinai entropy $\sH(L, \lambda)$, and thus $(2^G, u_2^G)$ and $(3^G, u_3^G)$ are non-isomorphic. In 2010, groundbreaking work of Bowen \cite{B10b}, together with improvements by Kerr and Li \cite{KL11a}, created the notion of sofic entropy for {\pmp} actions of sofic groups. We remind the reader that the class of sofic groups contains the countable amenable groups, and it is an open question whether every countable group is sofic. Sofic entropy extends Kolmogorov--Sinai entropy, as when the acting sofic group is amenable the two notions coincide \cite{B12, KL13}. For sofic $G$, the Bernoulli shift $(L^G, \lambda^G)$ has sofic entropy $\sH(L, \lambda)$ \cite{B10b, KL11b}. Thus $(2^G, u_2^G)$ and $(3^G, u_3^G)$ are non-isomorphic for sofic $G$. Based on these results, it seems that the following statement may be true of all countably infinite groups $G$:
$$\mathbf{INV:} \ \sH(L, \lambda) \text{ is an isomorphism invariant for } (L^G, \lambda^G).$$

\begin{rem}
Another important question is whether $\sH(L, \lambda) = \sH(K, \kappa)$ implies that $(L^G, \lambda^G)$ is isomorphic to $(K^G, \kappa^G)$. In 1970, Ornstein famously answered this question positively for $G = \Z$, thus completely classifying Bernoulli shifts over $\Z$ up to isomorphism \cite{Or70a, Or70b}. This result was extended to amenable groups by Ornstein and Weiss in 1987 \cite{OW87}. Work of Stepin shows that this property is retained under passage to supergroups \cite{St75}, so the isomorphism result extends to all groups which contain an infinite amenable subgroup. In 2012, Bowen proved that for every countably infinite group $G$, if $\sH(L, \lambda) = \sH(K, \kappa)$ and the supports of $\lambda$ and $\kappa$ each have cardinality at least $3$, then $(L^G, \lambda^G)$ is isomorphic to $(K^G, \kappa^G)$ \cite{B12b}. Thus, this question is nearly resolved with only the case of a two atom base space incomplete.
\end{rem}

We previously noted that one always has $\rh_G(L^G, \lambda^G) \leq \sH(L, \lambda)$. When $G$ is sofic, Rokhlin entropy is bounded below by sofic entropy \cite{B10b,AS} and thus $\rh_G(L^G, \lambda^G) = \sH(L, \lambda)$ whenever $G$ is sofic. Since the definition of Rokhlin entropy does not require the acting group to be sofic, the statement
$$\mathbf{RBS:} \ \rh_G(L^G, \lambda^G) = \sH(L, \lambda) \text{ for every standard probability space } (L, \lambda).$$
(acronym for Rokhlin entropy of Bernoulli Shifts) may be true for all countably infinite groups $G$. Notice that {\bf RBS} $\Rightarrow$ {\bf INV}.

In this paper we investigate {\bf RBS} and its consequences. We first show in Section \ref{sec:trans} that Rokhlin entropy cannot be realized by a generating partition whose translates are correlated. 

\begin{thm} \label{intro:drop}
Let $G$ be a countably infinite group, let $G \acts (X, \mu)$ be a free {\pmp} ergodic action, and let $\alpha$ be a countable generating partition. If $T \subseteq G$ is finite, $\epsilon > 0$, and $\frac{1}{|T|} \cdot \sH(\alpha^T) < \sH(\alpha) - \epsilon$, then $\rh_G(X, \mu) < \sH(\alpha) - \epsilon / (16 |T|^3)$.
\end{thm}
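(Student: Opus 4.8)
The plan is to prove the stronger statement that $\rh_G(X,\mu) \le \sH(\alpha) - \epsilon/(16|T|^3)$ by exhibiting a single countable generating partition $\beta$ whose Shannon entropy is this small; since $\rh_G(X,\mu)$ is the infimum of $\sH(\beta)$ over generating partitions, any such $\beta$ suffices. The hypothesis $\frac{1}{|T|}\sH(\alpha^T) < \sH(\alpha) - \epsilon$ says precisely that the translates $\{t\alpha : t \in T\}$ carry a definite amount of \emph{redundancy}: their multi-information $\sum_{t \in T}\sH(t\alpha) - \sH(\alpha^T) = |T|\sH(\alpha) - \sH(\alpha^T)$ exceeds $|T|\epsilon$. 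The entire task is to convert this average redundancy into a genuine drop in the entropy of one partition.

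First I would localize the redundancy. Writing $T = \{t_1,\dots,t_n\}$ and applying the chain rule $\sH(\alpha^T) = \sum_{i}\sH(t_i\alpha \given t_1\alpha \vee \cdots \vee t_{i-1}\alpha)$, the hypothesis forces the average conditional term below $\sH(\alpha) - \epsilon$, so some translate satisfies $\sH(t_i\alpha \given \bigvee_{j<i} t_j\alpha) < \sH(\alpha) - \epsilon$. Translating by $t_i^{-1}$ and using invariance of conditional Shannon entropy under the {\pmp} action, this becomes a single clean inequality $\sH(\alpha \given \alpha^F) < \sH(\alpha) - \epsilon$ for a finite $F \subseteq t_i^{-1}T$ with $1_G \notin F$ and $|F| < |T|$. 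In words: at a typical point the value of $\alpha$ is, on average, $\epsilon$-predictable from the values of $\alpha$ at the nearby translates indexed by $F$.

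The heart of the argument is a coding construction that cashes in this predictability. I would organize it around a Rokhlin/marker structure valid for \emph{every} free ergodic action of a countable group (a sparse complete section for the orbit equivalence relation, which exists with arbitrarily small measure): using the markers together with a fixed enumeration of $G$ to break symmetry, one obtains a $G$-equivariant, hence decoder-recomputable, maximal packing of each orbit by pairwise disjoint translates of a fixed window. On the packed windows the full joint value $\alpha^T$ is recorded at a single base site (costing $\sH(\alpha^T)$, the \emph{joint} entropy, rather than $|T|\sH(\alpha)$), while the remaining sites of a window are reconstructed from that base site and so need not store their own $\alpha$-value. Recording the joint symbol is what lets the \emph{average} deficiency register directly as a reduction in the Shannon entropy of the resulting partition $\beta$; a maximal packing fixes a positive fraction $\gtrsim 1/|T|^2$ of base sites, and the per-window saving is the localized redundancy exceeding $|T|\epsilon$, which is the source of the factor $\epsilon/(16|T|^3)$.

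The main obstacle — and the reason the saving is only a small fraction of $\epsilon$ rather than all of it — is controlling the two inevitable losses so that net savings stay positive. The first is \emph{structural overhead}: a decoder must locate the packing, and naively marking the compressed sites costs entropy comparable to the logarithm of their density, which for small $\epsilon$ swamps the gain; this is exactly why the markers must be taken sparse and the packing made canonical from the markers, so that its location carries no extra single-site entropy, and why no appeal to the Shannon--McMillan--Breiman theorem is available, $G$ being possibly non-amenable. The second is \emph{typicality}: the realized saving is the redundancy computed under the conditional measure on the base sites, not under $\mu$, so one must choose the packing so that the distribution of $\alpha^T$ on the base sites approximates its global distribution, preserving a definite fraction of the $|T|\epsilon$ gap. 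Quantifying both losses — packing density $\sim 1/|T|^2$, a further $1/|T|$ from the localized per-site gain, and universal constants from the entropy continuity estimates — is what yields the explicit bound $\rh_G(X,\mu) < \sH(\alpha) - \epsilon/(16|T|^3)$.
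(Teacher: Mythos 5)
Your core construction is the right one and matches the paper's (Theorem \ref{thm:drop}): find a base set $D$ whose $T^{-1}$-translates are pairwise disjoint and cover a set $R$ of definite measure ($\geq \tfrac14|T|^{-3}$), then replace $\alpha$ on $R$ by the partition $\big(\alpha\res(X\setminus R)\big)\cup\{R\setminus D\}\cup\big(\alpha^T\res D\big)$, so that the joint symbol is paid for once and the savings $\sum_{t}\sH(t\cdot\alpha)-\sH(\alpha^T)$ is realized. (Your chain-rule localization to a single conditional inequality $\sH(\alpha\given\alpha^F)<\sH(\alpha)-\epsilon$ is never used by your own coding step, which records the full $\alpha^T$; the paper works directly with the global gap.) The gaps are in the two places you yourself flag as the obstacles, and your proposed resolutions are not the ones that work. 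First, the auxiliary structure: you need the base-set partition to be decoder-recomputable at negligible entropy cost \emph{and} to have genuinely disjoint $T$-translates with pieces of definite measure. The paper does not get this from a sparse complete section plus an enumeration of $G$; it invokes Theorem \ref{thm:robin} (Seward--Tucker-Drob) to produce a \emph{free} factor $(Z,\eta)$ with $\rh_G(Z,\eta)<\epsilon/(16|T|^3)$, applies Lemma \ref{lem:part} inside the associated $\sigma$-algebra $\Sigma$ to obtain a partition $\xi\subseteq\Sigma$ into sets of measure $\geq\tfrac14|T|^{-4}$ with disjoint $T$-translates, and charges the cost of this structure via the subadditivity $\rh_{G,X}(\alpha)\leq\rh_G(Z,\eta)+\rh_{G,X}(\alpha\given\Sigma)$ of Corollary \ref{cor:add2}. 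For a general countable group the existence of a free factor of arbitrarily small Rokhlin entropy is a substantial external theorem, not a routine marker argument, and your sketch supplies no substitute; note also that your new partition must be generating on its own, whereas the paper's $\beta$ only needs to generate jointly with $\Sigma$.

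Second, the survival of the redundancy after conditioning on the auxiliary structure. Once the packing is measurable with respect to $\Sigma$, all entropies must be computed relative to $\Sigma$, and a priori the gap $|T|\sH(\alpha)-\sH(\alpha^T)>|T|\epsilon$ could vanish when conditioned. The paper resolves this with a dichotomy: either $\sH(\alpha\given\Sigma)\leq\sH(\alpha)-\epsilon/2$, in which case subadditivity alone already yields the conclusion, or else the conditional gap is still $>|T|\epsilon/2$, and then a pigeonhole over the finitely many pieces of $\xi$ (each of measure $\geq\tfrac14|T|^{-4}$) produces one piece $D$ whose local conditional redundancy is at least the average. Your proposed fix --- arranging the base sites so that the distribution of $\alpha^T$ there approximates its global distribution --- is both stronger than needed and much harder to arrange (it is an Ab\'{e}rt--Weiss-type genericity requirement, whereas an averaging argument suffices). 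As written, the proposal identifies the correct coding scheme and the correct obstacles but does not close either one; the missing ingredients are precisely Theorem \ref{thm:robin} together with the case split and pigeonhole that convert the global entropy gap into a conditional one on a single base set.
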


\begin{rem}
This result was later improved to $\rh_G(X, \mu) \leq \inf_T \frac{1}{|T|} \cdot \sH(\alpha^T)$, where the infimum is over all finite $T \subseteq G$ \cite{S16}.
\end{rem}

When $\sH(\alpha) < \infty$, the equality $\sH(\alpha^T) = |T| \cdot \sH(\alpha)$ implies that the $T$-translates of $\alpha$ are mutually independent. So we obtain the following.

\begin{cor} \label{intro:realize}
Let $G$ be a countably infinite group acting freely and ergodically on a standard probability space $(X, \mu)$ by measure-preserving bijections. If $\alpha$ is a countable generating partition and
$$\rh_G(X, \mu) = \sH(\alpha) < \infty,$$
then $(X, \mu)$ is isomorphic to a Bernoulli shift.
\end{cor}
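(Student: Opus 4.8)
The plan is to show that the hypothesis $\rh_G(X,\mu) = \sH(\alpha) < \infty$ forces the $G$-translates of $\alpha$ to be mutually independent, and then to exhibit any generating partition with mutually independent translates as presenting $(X,\mu)$ as a Bernoulli shift. The first half is essentially the contrapositive of Theorem~\ref{intro:drop} and is where all the genuine work has already been done; the second half is a direct construction of the isomorphism.

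For the independence I would argue by contradiction. Suppose the family $\{g \cdot \alpha : g \in G\}$ is not mutually independent. Mutual independence of a countable family is equivalent to independence of every finite subfamily, and a finite family of partitions $\{t \cdot \alpha : t \in T\}$, each of finite Shannon entropy since $\sH(\alpha) < \infty$ and the action is measure-preserving, is independent if and only if $\sH(\alpha^T) = \sum_{t \in T} \sH(t \cdot \alpha) = |T| \cdot \sH(\alpha)$; otherwise strict subadditivity $\sH(\alpha^T) < |T| \cdot \sH(\alpha)$ holds. Hence failure of mutual independence gives a finite $T \subseteq G$ with $\frac{1}{|T|} \sH(\alpha^T) < \sH(\alpha)$. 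Choosing $\epsilon = \frac{1}{2}\big(\sH(\alpha) - \frac{1}{|T|}\sH(\alpha^T)\big) > 0$, Theorem~\ref{intro:drop} yields $\rh_G(X,\mu) < \sH(\alpha) - \epsilon/(16|T|^3) < \sH(\alpha)$, contradicting $\rh_G(X,\mu) = \sH(\alpha)$. Therefore the translates of $\alpha$ are mutually independent.

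Granting this, I would write $\alpha = \{A_i : i \in I\}$ for a countable index set $I$, let $\lambda$ be the probability measure on $I$ given by $\lambda(\{i\}) = \mu(A_i)$, and define the $\alpha$-name map $\phi : X \to I^G$ by $\phi(x)(g) = i$ for the unique $i$ with $g^{-1} \cdot x \in A_i$. A short computation shows $\phi$ is $G$-equivariant for the shift action $(h \cdot y)(g) = y(h^{-1} g)$. For a cylinder determined by a finite $T \subseteq G$ and a choice of cells $(i_t)_{t \in T}$, its $\phi$-preimage is the intersection $\bigcap_{t \in T} t \cdot A_{i_t}$, whose measure is $\prod_{t \in T} \mu(A_{i_t}) = \prod_{t \in T} \lambda(i_t)$ precisely by the mutual independence just established; since cylinders generate $\Borel(I^G)$, this gives $\phi_* \mu = \lambda^G$. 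Finally, because $\alpha$ is generating we have $\phi^{-1}(\Borel(I^G)) = \salg_G(\alpha) = \Borel(X)$ modulo null sets, so the measure-preserving factor map $\phi$ is an isomorphism of $(X,\mu)$ onto the Bernoulli shift $(I^G, \lambda^G)$.

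The conceptual obstacle, namely converting an entropy deficit $\frac{1}{|T|}\sH(\alpha^T) < \sH(\alpha)$ for a generating partition into a strictly cheaper generating partition and hence a drop in Rokhlin entropy, is exactly what Theorem~\ref{intro:drop} supplies, so within this corollary the only points requiring care are the reduction from ``not mutually independent'' to strict subadditivity on some finite $T$, and the standard fact that a measure-preserving factor map of standard probability spaces whose pullback $\sigma$-algebra is the full Borel $\sigma$-algebra is an isomorphism.
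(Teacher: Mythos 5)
Your proposal is correct and follows essentially the same route as the paper's proof of this statement (Corollary \ref{cor:bernoulli}): use Theorem \ref{intro:drop} to force $\sH(\alpha^T) = |T|\cdot\sH(\alpha)$ for every finite $T$, conclude mutual independence of the translates via Lemma \ref{lem:shan}(vi), and then invoke the standard fact that a generating partition with mutually independent translates exhibits the action as a Bernoulli shift. The only difference is that you spell out the $\alpha$-name map explicitly, which the paper leaves as a cited "simple fact."
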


As sofic entropy is always bounded above by Rokhlin entropy \cite{B10b,AS}, we have the following immediate corollary.

\begin{cor} \label{intro:sofic}
Let $G$ be a sofic group with sofic approximation $\Sigma$, and let $G$ act freely and ergodically on a standard probability space $(X, \mu)$ by measure-preserving bijections. If $\alpha$ is a countable generating partition and the sofic entropy $h_G^\Sigma(X, \mu)$ satisfies $h_G^\Sigma(X, \mu) = \sH(\alpha) < \infty$, then $(X, \mu)$ is isomorphic to a Bernoulli shift.
\end{cor}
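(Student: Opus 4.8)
The plan is to prove this by a simple squeeze argument, since the corollary follows immediately from Corollary \ref{intro:realize} once the sofic and Rokhlin entropies are pinned against each other. All the substantive work has already been carried out in Theorem \ref{intro:drop} and its Corollary \ref{intro:realize}, so here I only need to exhibit a chain of inequalities that forces $\rh_G(X, \mu) = \sH(\alpha) < \infty$.

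First I would invoke the inequality stated just above the corollary: for an ergodic action of a sofic group with sofic approximation $\Sigma$, the sofic entropy never exceeds the Rokhlin entropy, i.e. $h_G^\Sigma(X, \mu) \leq \rh_G(X, \mu)$, attributed to Bowen \cite{B10b}. Second, because $\alpha$ is a countable generating partition, the definition of Rokhlin entropy as an infimum over all such partitions yields $\rh_G(X, \mu) \leq \sH(\alpha)$. Combining these with the hypothesis $h_G^\Sigma(X, \mu) = \sH(\alpha) < \infty$, I would write
$$\sH(\alpha) = h_G^\Sigma(X, \mu) \leq \rh_G(X, \mu) \leq \sH(\alpha).$$
Since every term is finite, equality must hold throughout, and in particular $\rh_G(X, \mu) = \sH(\alpha) < \infty$.

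At this point the hypotheses of Corollary \ref{intro:realize} are exactly met: $G$ is countably infinite (a sofic group carrying a free ergodic action with a finite-entropy generator is infinite), the action is free and ergodic, $\alpha$ is a countable generating partition, and $\rh_G(X, \mu) = \sH(\alpha) < \infty$. Applying Corollary \ref{intro:realize} directly then gives that $(X, \mu)$ is isomorphic to a Bernoulli shift, completing the proof. I do not anticipate any genuine obstacle here; the only points warranting care are confirming that freeness and ergodicity carry over verbatim into the hypotheses of Corollary \ref{intro:realize} (they are assumed) and that Bowen's bound $h_G^\Sigma \leq \rh_G$ applies in the present generality, both of which are already supplied by the surrounding text.
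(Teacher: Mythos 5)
Your proposal is correct and is exactly the argument the paper intends: the corollary is stated as an immediate consequence of Corollary \ref{intro:realize} via Bowen's bound $h_G^\Sigma(X,\mu) \leq \rh_G(X,\mu)$ together with the trivial inequality $\rh_G(X,\mu) \leq \sH(\alpha)$, which is precisely your squeeze. No gaps.
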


With the above corollary we answer a question of N.-P. Chung in \cite[Question 5.4]{Ch13} regarding equilibrium states for sofic pressure for a certain class of functions. See Corollary \ref{cor:pressure}.

From Theorem \ref{intro:drop} we derive in Section \ref{sec:gott} a few properties which would follow if {\bf RBS} were found to be true. Recall that an action $G \acts (X, \mu)$ of an amenable group $G$ is said to have \emph{completely positive entropy} if every factor $G \acts (Y, \nu)$ of $(X, \mu)$, with $Y$ not essentially a single point, has positive Kolmogorov--Sinai entropy. For $G = \Z$, these actions are also called Kolmogorov or K-automorphisms. The standard example of completely positive entropy actions are Bernoulli shifts (see \cite{RW00}). In fact, for amenable groups factors of Bernoulli shifts are Bernoulli \cite{OW87}, but it is unknown if this holds for any non-amenable group. Recently, it was proven by Kerr that Bernoulli shifts over sofic groups have completely positive sofic entropy \cite{Ke13a}. Along these lines, we obtain the following corollary of Theorem \ref{intro:drop}.

\begin{cor} \label{intro:cpre}
Let $G$ be a countably infinite group. Assume that $\rh_G(L^G, \lambda^G) = \sH(L, \lambda)$ for all standard probability spaces $(L, \lambda)$. Then every Bernoulli shift over $G$ has completely positive Rokhlin entropy.
\end{cor}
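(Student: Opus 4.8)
The plan is to prove complete positivity directly rather than by contradiction: for every factor $G \acts (Y,\nu)$ of the Bernoulli shift with $Y$ not essentially a single point, I would show $\rh_G(Y,\nu) > 0$. Write $X = L^G$, $\mu = \lambda^G$, let $\pi : X \to Y$ be the factor map, and let $\cF = \pi^{-1}(\Borel(Y)) \subseteq \Borel(X)$ be the associated nontrivial $G$-invariant sub-$\sigma$-algebra. The engine is the subadditivity of Rokhlin entropy over factors, $\rh_G(X,\mu) \leq \rh_G(Y,\nu) + \rh_G(X,\mu \given \cF)$, combined with the hypothesis $\rh_G(X,\mu) = \sH(L,\lambda)$. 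If I can establish the strict inequality $\rh_G(X,\mu \given \cF) < \sH(L,\lambda)$, then subadditivity forces $\rh_G(Y,\nu) \geq \sH(L,\lambda) - \rh_G(X,\mu\given\cF) > 0$, which is exactly what complete positivity demands.

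To bound the relative Rokhlin entropy I would use the canonical generator $\cL$, which is a relative generator over $\cF$ since $\salg_G(\cL) = \Borel(X)$; hence $\rh_G(X,\mu \given \cF) \leq \sH(\cL \given \cF)$. I then split into two cases according to whether a single coordinate already detects the factor. If $\sH(\cL \given \cF) < \sH(\cL) = \sH(L,\lambda)$ (equivalently $I(\cL;\cF) > 0$), then we are immediately done. Otherwise $\sH(\cL \given \cF) = \sH(L,\lambda)$, and here I would exploit that the translates $\{g \cdot \cL\}$, although mutually independent, cannot be relatively independent over $\cF$. Indeed, relative independence for every finite $T$ would give $\sH(\cL^T \given \cF) = |T| \cdot \sH(\cL \given \cF) = |T| \cdot \sH(L,\lambda) = \sH(\cL^T)$, so $\cF$ would be independent of every $\cL^T$ and therefore, since the algebras $\salg(\cL^T)$ generate $\salg_G(\cL) = \Borel(X) \supseteq \cF$, independent of itself; thus $\cF$ would be trivial, contrary to assumption.

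Consequently there is a finite $T$ and an $\epsilon > 0$ with $\tfrac{1}{|T|}\sH(\cL^T \given \cF) < \sH(\cL \given \cF) - \epsilon$, a genuine \emph{relative} entropy drop for $\cL$ over $\cF$. Feeding this into the conditional analog of Theorem \ref{intro:drop} would yield $\rh_G(X,\mu \given \cF) < \sH(\cL \given \cF) = \sH(L,\lambda)$, completing the argument in this case as well. Note that the relative framing is what makes the quantitative bookkeeping work: subadditivity absorbs the (possibly wasteful) cost of generating the factor into the $\rh_G(Y,\nu)$ term, while the conditional drop theorem supplies the strict saving in $\rh_G(X,\mu\given\cF)$ with no entropy overhead, whereas naively refining $\cL$ by a cheap generator of $\cF$ inflates the Shannon entropy by more than the drop theorem's constant $1/(16|T|^3)$ can recover.

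The main obstacle is precisely the final step: Theorem \ref{intro:drop} is stated absolutely, whereas the argument needs its relativization to an arbitrary $G$-invariant sub-$\sigma$-algebra $\cF$, asserting that a relative drop $\tfrac{1}{|T|}\sH(\alpha^T \given \cF) < \sH(\alpha \given \cF) - \epsilon$ for a relative generator $\alpha$ forces $\rh_G(X,\mu \given \cF)$ strictly below $\sH(\alpha \given \cF)$. I expect this to follow by carrying out the proof of Theorem \ref{intro:drop} conditionally on $\cF$: because $\cF$ is $G$-invariant, the partition-surgery and recoding steps should relativize verbatim once every Shannon entropy is replaced by its conditional counterpart relative to $\cF$. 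A secondary point to settle is the case $\sH(L,\lambda) = \infty$, where one first passes to a finite-entropy Bernoulli factor, together with a check that the subadditivity and martingale/density inputs hold in the stated generality.
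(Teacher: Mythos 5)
Your argument for the case $\sH(L,\lambda)<\infty$ is correct, and it is a genuinely cleaner route than the paper's. The dichotomy is sound: if $\sH(\cL\given\cF)<\sH(\cL)$ you win immediately, and otherwise the translates of $\cL$ cannot be relatively independent over $\cF$ (else $\cF$ would be independent of every $\salg(\cL^T)$, hence of $\Borel(L^G)\supseteq\cF$, hence trivial), so some finite $T$ exhibits a relative drop. The ``main obstacle'' you flag is not actually an obstacle: the paper states and proves exactly the relativization you need as Theorem \ref{thm:drop} (with the variant Corollary \ref{cor:drop2}); the introduction's Theorem \ref{intro:drop} is just its special case $\cF=\{X,\varnothing\}$. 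Since $\cL$ is generating, $\rh_{G,X}(\cL\given\cF)=\rh_G(X,\mu\given\cF)$, so Theorem \ref{thm:drop} gives $\rh_G(X,\mu\given\cF)<\sH(L,\lambda)$, and one application of Corollary \ref{cor:add} finishes. By contrast the paper's proof (Corollary \ref{cor:cpe}) proves the stronger statement that factors have positive \emph{outer} Rokhlin entropy, which forces it to approximate a near-optimal generator $\cQ$ of $\cF$ by partitions $\gamma\subseteq\salg_G(\cL_k)$ and run a longer chain of subadditivity inequalities; your version buys simplicity at the cost of only bounding $\rh_G(Y,\nu)$, which is all the stated corollary asks for.

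The genuine gap is the case $\sH(L,\lambda)=\infty$, which you defer as a ``secondary point'' with a fix that does not work as described. There the hypothesis gives $\rh_G(L^G,\lambda^G)=\infty$, so the subadditivity inequality $\infty\le\rh_G(Y,\nu)+\rh_G(L^G,\lambda^G\given\cF)$ carries no information unless you can prove $\rh_G(L^G,\lambda^G\given\cF)<\infty$, which is false in general (take $\cF$ a finite-entropy Bernoulli factor); moreover $\cL$ may be uncountable, so the dichotomy on $\sH(\cL\given\cF)$ does not even parse. ``Passing to a finite-entropy Bernoulli factor'' $L_k^G$ does not repair this, because the given factor $Y$ is not a factor of any $L_k^G$: if you instead work in the joining $W$ generated by $\salg_G(\cL_k)\vee\cF$, you need the lower bound $\rh_G(W)\ge\sH(\cL_k)$, and Rokhlin entropy is not known to be monotone under extensions, nor is $W$ a Bernoulli shift to which the hypothesis applies. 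This is precisely why the paper's proof replaces $\cF$ by a partition $\gamma\subseteq\salg_G(\cL_k)$ of Shannon entropy close to $\rh_{G,L^G}(Y)$ and runs the entire subadditivity chain inside $L_k^G$, where the hypothesis pins down $\rh_G(L_k^G,\lambda_k^G)=\sH(\cL_k)<\infty$ and yields the sandwich identity $\sH(\cL_m)+\rh_G(L_k^G,\lambda_k^G\given\salg_G(\cL_m))=\rh_G(L_k^G,\lambda_k^G)$. Some such approximation step is needed to make your argument cover the infinite-entropy base spaces that the statement includes.
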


Our next corollary relates to two well-known open conjectures from outside ergodic theory. The first is \emph{Kaplansky's direct finiteness conjecture}, which states that for every countable group $G$ and every field $K$, if $a$ and $b$ are elements of the group ring $K[G]$ and satisfy $a b = 1$ then $b a = 1$. Kaplansky proved this for $K = \mathbb{C}$ in 1972 \cite{Ka72} (see also a shorter proof by Burger and Valette \cite{BV98}). For general fields $K$, this conjecture was proven for abelian groups by Ara, O'Meara, and Perera in 2002 \cite{AOP02}, and then proven for sofic groups by Elek and Szab\'{o} in 2004 \cite{ES04}.

The second conjecture is \emph{Gottschalk's surjunctivity conjecture}, which states that if $G$ is a countable group, $n \in \N$, and $\phi : n^G \rightarrow n^G$ is a continuous $G$-equivariant injection, then $\phi$ is surjective. This conjecture has a simple topological proof when $G$ is residually finite (this is due to Lawton, see \cite{Got} or \cite{W00}), and can be proven for amenable groups using topological entropy. Gromov proved the conjecture for sofic groups, and in fact he defined the class of sofic groups for this purpose \cite{Gr99, W00}. Later, after the discovery of sofic entropy, a topological entropy proof was given for sofic groups \cite{KL11a}. We point out that it is known that Gottschalk's surjunctivity conjecture implies Kaplansky's direct finiteness conjecture \cite[Section I.5]{CL13}.

From Corollary \ref{intro:realize} we deduce the following.

\begin{cor} \label{intro:gott}
Let $G$ be a countably infinite group. Assume that $\rh_G(L^G, \lambda^G) = \sH(L, \lambda)$ for all standard probability spaces $(L, \lambda)$. Then $G$ satisfies Gottschalk's surjunctivity conjecture and Kaplansky's direct finiteness conjecture.
\end{cor}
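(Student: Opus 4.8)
The plan is to deduce both conjectures from Corollary~\ref{intro:realize} by contradiction, exploiting the fact that a failure of either conjecture produces a "proper" $G$-equivariant Borel map on a Bernoulli shift whose image is a $G$-invariant factor that is itself Bernoulli with a full entropy generating partition. Let me sketch each implication.

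For Gottschalk's conjecture, suppose $\phi : n^G \to n^G$ is a continuous $G$-equivariant injection that is \emph{not} surjective. The idea is to run $\phi$ against the Bernoulli measure $\mu = u_n^G$ on the source. Since $\phi$ is $G$-equivariant and the shift action on $n^G$ is ergodic for $u_n^G$, the pushforward $\nu = \phi_* \mu$ is a $G$-invariant ergodic measure on $n^G$ supported on $\phi(n^G) \subsetneq n^G$. Because $\phi$ is a continuous injection on a compact space, it is a homeomorphism onto its image, so $(n^G, \mu)$ and $(n^G, \nu)$ are isomorphic as {\pmp} systems; in particular the latter is isomorphic to the Bernoulli shift $(n^G, u_n^G)$ and hence, by the hypothesis {\bf RBS}, has $\rh_G = \log n$. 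Now I would use the canonical generating partition $\cL$ of $(n^G, \nu)$ pulled back appropriately: the partition $\alpha = \{\phi(R_0), \ldots, \phi(R_{n-1})\}$ (images of the time-zero cylinders) is a generating partition for $(n^G, \nu)$ with $\sH(\alpha) = \log n = \rh_G(n^G, \nu)$. By Corollary~\ref{intro:realize}, $(n^G, \nu)$ is measure-isomorphic to a Bernoulli shift of entropy $\log n$ --- which it already is, so no contradiction arises at the measure level directly. The actual contradiction must come from comparing the two \emph{topological} supports: I would need to show that non-surjectivity of $\phi$ forces $\nu$ to live on a proper closed shift-invariant subset while still having full Rokhlin entropy $\log n$, and then argue this is impossible. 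The cleanest route is to invoke the standard reduction that surjunctivity for $G$ follows once one knows that $(n^G, u_n^G)$ cannot embed $G$-equivariantly (measurably) into a proper subshift supporting an invariant measure of strictly smaller "size"; here the key point is that a proper subshift cannot carry the same full-entropy structure.

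For Kaplansky's direct finiteness conjecture, rather than argue from scratch I would invoke the known implication, stated in the excerpt, that Gottschalk's surjunctivity conjecture implies Kaplansky's direct finiteness conjecture \cite[Section I.5]{CL13}. Thus once Gottschalk is established for $G$ under the hypothesis {\bf RBS}, direct finiteness follows immediately with no extra work.

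\textbf{The main obstacle}, and the step I expect to require the most care, is converting the \emph{measurable} isomorphism supplied by Corollary~\ref{intro:realize} into a genuine \emph{topological} contradiction with non-surjectivity. Corollary~\ref{intro:realize} only tells us that $(n^G, \nu)$ is abstractly Bernoulli, which it trivially is; the content must come from the fact that the support of $\nu$ is a proper closed invariant subset $Y \subsetneq n^G$, so one needs an argument that no shift-invariant measure of entropy $\log n$ can be supported on a proper subshift of $n^G$. The natural mechanism is that if $\phi$ misses a point, then by equivariance and minimality-type arguments $\phi(n^G)$ avoids an open set of positive measure, which should force the time-zero partition $\{\phi(R_i)\}$ to fail to be ``independent enough'' to achieve full entropy --- contradicting the equality $\rh_G(n^G,\nu) = \log n = \sH(\alpha)$ together with the Bernoullicity forced by Corollary~\ref{intro:realize}. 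Making this quantitative is where Theorem~\ref{intro:drop} --- the entropy-drop mechanism behind Corollary~\ref{intro:realize} --- would do the real work: any deficiency in independence of the translates of the generating partition strictly lowers the Rokhlin entropy below $\log n$, contradicting {\bf RBS}. I would organize the proof so that the combinatorial consequence of $\phi$ being non-surjective is precisely such a strict drop.
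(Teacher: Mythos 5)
Your overall strategy matches the paper's (push $u_n^G$ forward through $\phi$, use the hypothesis to pin the Rokhlin entropy of the image system at $\log n$, and extract a contradiction from non-surjectivity via the entropy-drop mechanism of Theorem~\ref{intro:drop}; Kaplansky is reduced to Gottschalk via \cite[Section I.5]{CL13} exactly as in the paper). But the decisive step is left as a gap, and the partition you propose to feed into Theorem~\ref{intro:drop} is the wrong one. The translates of $\alpha = \{\phi(R_0), \ldots, \phi(R_{n-1})\}$ are \emph{exactly} mutually independent under $\nu = \phi_* (u_n^G)$, because $\phi$ conjugates the two systems; so no entropy drop can ever be extracted from $\alpha$, and the assertion that non-surjectivity ``should force the time-zero partition $\{\phi(R_i)\}$ to fail to be independent enough'' is false as stated. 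Likewise, Corollary~\ref{intro:realize} applied to $\alpha$ only re-proves that $(n^G,\nu)$ is Bernoulli, which you already know. The claim you flag as the main obstacle --- that a proper subshift cannot carry an invariant measure of Rokhlin entropy $\log n$ --- is precisely the content of the corollary and is never actually proved in your sketch.

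The missing idea is to switch partitions: use the canonical coordinate partition $\cL = \{R_0, \ldots, R_{n-1}\}$ of the \emph{ambient} shift $n^G$, restricted to the image $Y = \phi(n^G)$, rather than the $\phi$-images of the cylinders. Since $Y$ is a subshift, $\cL \res Y$ is a generating partition for $(Y, \nu)$ with at most $n$ pieces, so
$$\log n = \rh_G(n^G, u_n^G) = \rh_G(Y, \nu) \leq \sH_\nu(\cL) \leq \log n,$$
forcing $\sH_\nu(\cL) = \log n = \rh_G(Y,\nu)$; Theorem~\ref{intro:drop} then forces $\sH_\nu(\cL^T) = |T| \cdot \log n$ for every finite $T \subseteq G$. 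On the other hand, non-surjectivity needs no minimality argument: $\phi(n^G)$ is a closed proper subset of $n^G$, so its complement contains a basic cylinder, i.e.\ there are a finite $T \subseteq G$ and a pattern $w \in n^{T^{-1}}$ with $y \res T^{-1} \neq w$ for all $y \in Y$. Since the cells of $\cL^T$ are determined by the restriction to $T^{-1}$, this gives $|\cL^T \res Y| \leq n^{|T|} - 1$ and hence $\sH_\nu(\cL^T) \leq \log(n^{|T|}-1) < |T| \cdot \log n$, the desired contradiction. This is exactly the paper's argument; without the change of partition your outline cannot be completed.
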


If we define the statements
\begin{align*}
\mathbf{CPE:} & \text{ Every Bernoulli shift over } G \text{ has completely positive Rokhlin entropy}.\\
\mathbf{GOT:} & \ G \text{ satisfies Gottschalk's surjunctivity conjecture}.\\
\mathbf{KAP:} & \ G \text{ satisfies Kaplansky's direct finiteness conjecture}.
\end{align*}
then from earlier comments and Corollaries \ref{intro:cpre} and \ref{intro:gott} we deduce that for every countably infinite group $G$
$$\mathbf{RBS} \ \Rightarrow \ \mathbf{INV} + \mathbf{CPE} + \mathbf{GOT} + \mathbf{KAP}.$$

Beginning with Section \ref{sec:approx}, the rest of the paper investigates the validity of {\bf RBS}. We remark that, a priori, there is nothing obvious one can say about $\rh_G(L^G, \lambda^G)$ except that
$$\rh_G((L \times K)^G, (\lambda \times \kappa)^G) \leq \rh_G(L^G, \lambda^G) + \rh_G(K^G, \kappa^G) \leq \sH(L, \lambda) + \sH(K, \kappa).$$
Indeed, we do not know if Rokhlin entropy is additive under direct products, even for Bernoulli shifts.

For a countably infinite group $G$, define
$$\suprh{G} = \sup_{G \acts (X, \mu)} \rh_G(X, \mu),$$
where the supremum is taken over all free ergodic {\pmp} actions $G \acts (X, \mu)$ with $\rh_G(X, \mu) < \infty$. We will relate the validity of {\bf RBS} to the following two statements.
\begin{align*}
\mathbf{POS:} & \text{ There is a free ergodic {\pmp} action } G \acts (X, \mu) \text{ with } \rh_G(X, \mu) > 0.\\
\mathbf{INF:} & \ \suprh{G} = \infty.
\end{align*}
Both statements are known to be true when $G$ is a countably infinite sofic group since sofic entropy is a lower bound to Rokhlin entropy. We do not know whether {\bf POS} implies {\bf INF} (see the discussion following Corollary \ref{cor:infjump}).

Our main tool to study {\bf RBS} is the construction, in Section \ref{sec:approx}, of generating partitions $\alpha$ which are almost Bernoulli in the sense that $\sH(\alpha^T) / |T| > \sH(\alpha) - \epsilon$ for some large but finite $T \subseteq G$ and some small $\epsilon > 0$. By well known properties of Shannon entropy \cite[Fact 3.1.3]{Do11}, this condition is equivalent to saying that the $T$-translates of $\alpha$ are close to being mutually independent. The theorem below may be viewed as a generalization of a similar result obtained by Grillenberger and Krengel for $G = \Z$ \cite{GK76}.

\begin{thm} \label{intro:bkrieger}
Let $G$ be a countably infinite group acting freely and ergodically on a standard probability space $(X, \mu)$ by measure-preserving bijections. If $\pv = (p_i)$ is any finite or countable probability vector with $\rh_G(X, \mu) < \sH(\pv) < \infty$, then for every finite $T \subseteq G$ and $\epsilon > 0$ there is a generating partition $\alpha = \{A_i \: 0 \leq i < |\pv|\}$ with $\mu(A_i) = p_i$ for every $0 \leq i < |\pv|$ and
$$\frac{1}{|T|} \cdot \sH(\alpha^T) > \sH(\alpha) - \epsilon.$$
\end{thm}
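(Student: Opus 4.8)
The plan is to build the desired generator by splicing together two partitions---an almost-Bernoulli partition supplied by weak containment, and a generating partition of finite Shannon entropy---while hiding the generating information on a set of arbitrarily small density so as not to disturb the near-independence. Throughout I may assume $\pv$ is finite, say with $|\pv| = d$; the countable case follows by truncating $\pv$ to a long finite initial segment, applying the finite case, and transmitting the successive refinements that recover the tail on ever sparser coding sites (Step 3 below), the errors being summable. Note that Theorem~\ref{intro:drop} does not close the argument on its own: the constraint $\mu(A_i)=p_i$ pins $\sH(\alpha)=\sH(\pv)$, so the failure of near-independence only yields $\rh_G(X,\mu) < \sH(\pv) - \epsilon/(16|T|^3)$, which is consistent with the hypothesis and does not iterate. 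Hence a genuine construction is needed.

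First I would record the two ingredients. Since $G \acts (X,\mu)$ is free, the Ab\'{e}rt--Weiss theorem gives, for any finite $T' \supseteq T$ and any $\eta > 0$, a partition $\beta = \{B_i : 0 \le i < d\}$ with $\mu(B_i) = p_i$ whose $T'$-joint distribution lies within $\eta$ of the product distribution; in particular $\frac{1}{|T|} \sH(\beta^T) > \sH(\pv) - \epsilon/2$ once $\eta$ is small. This $\beta$ is almost Bernoulli but need not generate. On the other hand, because $\rh_G(X,\mu) < \sH(\pv) < \infty$, the definition of Rokhlin entropy furnishes a countable generating partition $\gamma$ with $\sH(\gamma) < \infty$, which generates but carries no independence.

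The heart of the argument is the splice. Using freeness and ergodicity I would produce a Borel marker set $M$ whose translates across a large window are disjoint, with $\mu(M)$ as small as desired, together with bounded coding windows around the markers; I then define $\alpha$ to equal $\beta$ off the coding windows and to carry an encoding of $\gamma$ (truncated to finite partitions $\gamma_1 \le \gamma_2 \le \cdots \nearrow \gamma$, the $k$-th truncation placed on a marker family of measure $< \eta 2^{-k}$) on the coding windows. Two things must be checked. For near-independence, the modified coordinates have total measure $O(\mu(M) + \eta)$, so by continuity of Shannon entropy $\frac{1}{|T|}\sH(\alpha^T)$ differs from $\frac{1}{|T|}\sH(\beta^T)$ by $O(\eta)$ and the bound $> \sH(\pv) - \epsilon$ survives. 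For generation, one must recover $M$ from the $\alpha$-name and then decode each $\gamma_k$ by equivariance, giving $\salg_G(\alpha) \supseteq \bigvee_k \salg_G(\gamma_k) = \salg_G(\gamma) = \Borel(X)$. Finally, since the splice perturbs the marginals of $\alpha$ away from $\pv$ by only $O(\mu(M)+\eta)$, I would restore them to exactly $\pv$ by a small measure-preserving surgery along the orbits (or, invoking the flexibility of Theorem~\ref{intro:krieger}, by passing to a generating partition with the exact prescribed distribution), the perturbation being small enough to keep $\frac{1}{|T|}\sH(\alpha^T) > \sH(\pv) - \epsilon$.

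The main obstacle is the splice over an arbitrary countable group, where no F\o lner sets are available to run a block code. Two points are delicate: constructing the marker set $M$ so that it is both sufficiently sparse and separated (to limit the measure of modified coordinates) and self-locating, i.e.\ recoverable from $\alpha$ alone via a $G$-equivariant Borel rule---this requires a self-synchronizing code with guard regions, since in the near-i.i.d.\ partition $\beta$ any fixed pattern recurs spuriously; and reconciling exact marginal $\pv$ with global generation, since generation demands injecting genuine long-range information while near-independence on $T$ forbids visible correlations there. The resolution in both cases is to carry the generating information only on sets of arbitrarily small density, so that its statistical footprint on the window $T$ is negligible while its cumulative content still recovers all of $\gamma$.
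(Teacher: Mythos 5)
Your strategy---keep the Ab\'{e}rt--Weiss partition $\beta$ intact off a sparse family of coding windows and hide a generator $\gamma$ inside those windows---has a genuine gap, in fact two. The first is an information-capacity obstruction. If $\alpha$ agrees with $\beta$ outside a set $W$ of measure $\eta$, then $\salg_G(\alpha) \subseteq \salg_G(\beta) \vee \salg_G(\{W\}) \vee \salg_G\big((\alpha \res W) \cup \{X \setminus W\}\big)$, so by Corollary \ref{cor:add2} the windows contribute at most $2\sH(\eta, 1-\eta) + \eta \log |\pv|$ of outer Rokhlin entropy beyond $\salg_G(\beta)$. The Ab\'{e}rt--Weiss theorem gives no lower bound on what $\beta$ generates, so for $\alpha$ to generate you need $2\sH(\eta,1-\eta) + \eta\log|\pv| \geq \rh_G(X,\mu \given \salg_G(\beta))$, and the right-hand side may be as large as (essentially) $\rh_G(X,\mu)$. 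Meanwhile preserving $\frac{1}{|T|}\sH(\alpha^T) > \sH(\pv)-\epsilon$ forces the total measure of modified coordinates, hence $\eta$, to be small compared to $\epsilon$. When $\epsilon$ is much smaller than $\rh_G(X,\mu)$ these requirements are incompatible: the windows are too small to carry the generating information. The second problem is the self-locating marker set. Over an arbitrary countable group there are no F\o lner sets or Rokhlin towers with which to build a self-synchronizing code with guard regions, and making $M$ recoverable from the $\alpha$-name by an equivariant Borel rule is precisely the hard point; you name this difficulty but do not resolve it.

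The paper's proof (Theorem \ref{thm:fake}) avoids both problems by inverting the logic. It does not keep a Bernoulli layer clean. Instead it places an almost-Bernoulli partition $\gamma$ \emph{inside a free factor $\cF$ of tiny Rokhlin entropy} (Theorem \ref{thm:robin}), and uses the relative generator theorem from Part I (Theorem \ref{thm:relbasic}, via Lemmas \ref{lem:subset} and \ref{lem:superrel}) to build $\alpha$ on a set of measure $1-\kappa$ with prescribed conditional distributions over the cells of $\gamma$ and with $\salg_G(\alpha)\vee\cF = \Borel(X)$; the ``where are the markers'' problem is absorbed into the $\ralg_G$ conclusion of Theorem \ref{thm:relbasic}, which guarantees that any extension of a small pre-partition already recovers $\cF$. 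The construction makes $\sH(\alpha\given\cF)$ within $O(\epsilon/|T|^3)$ of $\rh_G(X,\mu\given\cF)$, and then Theorem \ref{thm:drop}, applied in the contrapositive, \emph{forces} $\frac{1}{|T|}\sH(\alpha^T\given\cF) \geq \sH(\alpha\given\cF) - \epsilon/4$: near-optimality of the conditional Shannon entropy yields near-independence of the translates, rather than near-independence being inherited from an untouched i.i.d.\ layer. Combining this with the near-independence of $\gamma^T$ gives the bound for $\alpha^T$. This reversal is the idea your proposal is missing, and it is why your correct observation that Theorem \ref{intro:drop} ``does not close the argument on its own'' understates its role: used relative to $\cF$ and in the contrapositive, it is exactly what replaces your splice.
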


Note that $\sH(\alpha)$ could be tremendously larger than $\rh_G(X, \mu)$.

The above theorem strengthens the result of Ab\'{e}rt and Weiss that all free actions weakly contain a Bernoulli shift \cite{AW13}. Specifically, assuming only that $\sH(\pv) > 0$, they proved the existence of an $\alpha$ which is not necessarily generating but otherwise satisfies the conditions stated in the above theorem.

In Section \ref{sec:fin} we establish two semi-continuity properties of Rokhlin entropy, and then we use these semi-continuity properties and Theorem \ref{intro:bkrieger} in order to prove the theorem below. This theorem addresses the validity of {\bf RBS} when $\sH(L, \lambda) < \infty$.

\begin{thm} \label{intro:finite}
Let $G$ be a countably infinite group and let $(L, \lambda)$ be a standard probability space with $\sH(L, \lambda) < \infty$. Then
$$\rh_G(L^G, \lambda^G) = \min \Big( \sH(L, \lambda), \ \suprh{G} \Big).$$
\end{thm}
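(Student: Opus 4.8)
Write $H=\sH(L,\lambda)$ and $S=\suprh{G}$. The plan is to prove the two inequalities separately. The upper bound $\rh_G(L^G,\lambda^G)\le\min(H,S)$ is immediate: one always has $\rh_G(L^G,\lambda^G)\le\sH(\cL)=H$ via the canonical generator, and since $(L^G,\lambda^G)$ is itself a free ergodic action of finite Rokhlin entropy (as $H<\infty$), its entropy is at most $S$ by the definition of $\suprh{G}$. I would also reduce the lower bound to the case of finite $L$: for countable $L$ with $H<\infty$, truncate to finitely supported $\lambda_k$ with $\sH(\lambda_k)\uparrow H$, note that $(L_k^G,\lambda_k^G)$ is a coordinatewise factor of $(L^G,\lambda^G)$ (so its Rokhlin entropy is no larger), and let $k\to\infty$. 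Thus it suffices to treat $L$ finite.

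The engine for the lower bound is Theorem~\ref{intro:bkrieger} combined with the upper-semicontinuity of Corollary~\ref{intro:mups}. The key claim is that for every free ergodic $G\acts(Y,\nu)$ with $\rh_G(Y,\nu)<H$ one has $\rh_G(L^G,\lambda^G)\ge\rh_G(Y,\nu)$. To see this, fix a finite exhaustion $T_1\subseteq T_2\subseteq\cdots$ of $G$ and apply Theorem~\ref{intro:bkrieger} with $\pv=\lambda$ and parameters $(T_n,\epsilon_n)$, choosing $\epsilon_n=1/(n\,|T_n|)$. This yields generating partitions $\alpha_n=\{A^{(n)}_\ell:\ell\in L\}$ of $(Y,\nu)$ with $\nu(A^{(n)}_\ell)=\lambda(\ell)$ and total correlation $|T_n|\sH(\alpha_n)-\sH(\alpha_n^{T_n})<\epsilon_n|T_n|=1/n$. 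Let $\phi_n\colon Y\to L^G$ be the itinerary map and $\mu_n=(\phi_n)_*\nu$; each $\mu_n$ is invariant and ergodic, and since $\alpha_n$ is generating, $\phi_n$ is an isomorphism, so $\rh_G(L^G,\mu_n)=\rh_G(Y,\nu)$. Because total correlation is monotone under adding coordinates, for any fixed finite $F\subseteq G$ the correlation of $\{t\cdot\alpha_n\}_{t\in F}$ is at most $1/n$ once $F\subseteq T_n$; by Pinsker's inequality the $F$-marginal of $\mu_n$ (whose one-dimensional marginals are exactly $\lambda$) converges to $\lambda^F$, so $\mu_n\to\lambda^G$ weak$^*$. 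Corollary~\ref{intro:mups} then gives $\rh_G(L^G,\lambda^G)\ge\limsup_n\rh_G(L^G,\mu_n)=\rh_G(Y,\nu)$, proving the claim.

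It remains to show $\sup\{\rh_G(Y,\nu):(Y,\nu)\text{ free ergodic},\ \rh_G(Y,\nu)<H\}=\min(H,S)$. When $S<H$ this is immediate from the definition of $\suprh{G}$, since every free ergodic action of finite entropy automatically satisfies $\rh_G<H$, so the supremum is $S=\min(H,S)$. The substantive case is $S\ge H$, where I must produce free ergodic actions of Rokhlin entropy arbitrarily close to $H$ from below; \emph{this is the main obstacle}. The difficulty is structural: passing to factors only ever gives \emph{upper} bounds on Rokhlin entropy, so realizing an intermediate value requires a matching lower bound, which is the genuinely hard side of the theory (and is exactly why the statement reduces the open problem $\mathbf{RBS}$ to the growth question $\mathbf{INF}$: for finite-entropy bases, $\mathbf{RBS}$ holds iff $\suprh{G}=\infty$).

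I would realize intermediate values as follows. Pick a free ergodic $(Z,\eta)$ with $\rh_G(Z,\eta)=h$ as close to $S$ as desired, and a near-optimal generating partition $\beta$ with $\sH(\beta)$ close to $h$. For a target $c<H$, coarsen $\beta$ to $\beta_1\le\beta$ with $\sH(\beta_1)=c$ and set $Y_1=\salg_G(\beta_1)$. On one hand $\rh_G(Y_1)\le\sH(\beta_1)=c$. On the other hand, by the subadditivity $\rh_G(Z,\eta)\le\rh_G(Y_1)+\rh_G(Z\given Y_1)$ together with the relative generator bound $\rh_G(Z\given Y_1)\le\sH(\beta\given\beta_1)=\sH(\beta)-c$, we get $\rh_G(Y_1)\ge h-(\sH(\beta)-c)$, which is within $o(1)$ of $c$ as $\sH(\beta)\to h$. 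Multiplying by a Bernoulli shift of negligible entropy to guarantee freeness, this produces free ergodic actions of entropy arbitrarily close to any $c<H$, so the supremum above equals $H=\min(H,S)$ and the proof closes. The crux is therefore this realization step; it rests on the relative Rokhlin entropy inequalities, and can alternatively be extracted from the Kolmogorov--Sinai-type Theorem~\ref{intro:ks} and its corollaries.
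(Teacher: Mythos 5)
Your overall strategy is the same as the paper's: combine the approximately Bernoulli generating partitions of Theorem \ref{intro:bkrieger} with the weak$^*$ upper semicontinuity of Corollary \ref{intro:mups} to show that $\rh_G(L^G,\lambda^G)$ dominates the Rokhlin entropy of every free ergodic action with entropy below $\sH(L,\lambda)$, and then fill the interval $[0,\min(\sH(L,\lambda),\suprh{G}))$ with free ergodic actions by coarsening a near-optimal generating partition. The central approximation argument is correct (the paper packages it as Corollary \ref{cor:iso} and runs it as a proof by contradiction inside Proposition \ref{prop:finite}, but the content is the same), and your realization step is essentially the paper's Lemma \ref{lem:dfactor}.

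There is, however, one recurring gap: twice you silently assume that Rokhlin entropy cannot increase when passing to a factor, and this is not known --- it is one of the basic open problems about the invariant, since a generating partition of $X$ need not be measurable with respect to the factor $\sigma$-algebra and so does not push forward. First, in the reduction from countable $L$ to finite $L$ you assert that $(L_k^G,\lambda_k^G)$ being a coordinatewise factor of $(L^G,\lambda^G)$ forces $\rh_G(L_k^G,\lambda_k^G)\le\rh_G(L^G,\lambda^G)$; the inequality you need is true here, but only because of Corollary \ref{cor:up} (equivalently the upper semicontinuity of Corollary \ref{cor:pups}), which rests on the Kolmogorov--Sinai machinery of Section \ref{sec:ks} and is exactly how the paper performs this reduction. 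Second, in the realization step you restore freeness by taking an external product with a low-entropy Bernoulli shift; but then the needed lower bound $\rh_G(Y_1\times B)\ge\rh_G(Y_1)$ is again unproven factor monotonicity, and the subadditivity chain through $Z$ breaks because $Y_1\times B$ is not a factor of $Z$. The paper's Lemma \ref{lem:dfactor} avoids this by taking the auxiliary low-entropy free action to be a factor of $Z$ itself (Theorem \ref{thm:robin}) and joining its $\sigma$-algebra to $\salg_G(\beta_1)$, so that Corollary \ref{cor:add} applies entirely inside $Z$. (A minor further point: a given $\beta$ need not admit a coarsening of entropy exactly $c$; the paper arranges this by first invoking Theorem \ref{intro:krieger} to produce a generating partition with a product distribution.) All of these are repairable with tools already in the paper, but as written those two steps are appeals to an open problem rather than proofs.
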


Note that when $\rh_G(L^G, \lambda^G) < \sH(L, \lambda)$, the supremum $\suprh{G}$ is achieved by $(L^G, \lambda^G)$. We point out that the above theorem places a significant restriction on the nature of the map $\sH(L, \lambda) \mapsto \rh_G(L^G, \lambda^G)$. Prior to obtaining this theorem, there is no obvious reason why this map should be monotone or even piece-wise linear.

In Section \ref{sec:fin} we also prove the following.

\begin{thm} \label{intro:kill}
Let $P$ be a countable group containing arbitrarily large finite subgroups. If $G$ is any countably infinite group with $\suprh{G} < \infty$ then $\suprh{P \times G} = 0$.
\end{thm}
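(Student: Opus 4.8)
The plan is to reduce the statement to a vanishing of Rokhlin entropy for individual actions and then to exploit the finite subgroups of $P$ to divide entropy by arbitrarily large integers. Write $\Gamma = P \times G$, $H = \{1_P\} \times G \cong G$, and for a finite subgroup $F \le P$ of order $n$ put $K = F \times G$. Since every $K$-generating partition is automatically $\Gamma$-generating (as $K \le \Gamma$ implies $\salg_K(\cdot) \subseteq \salg_\Gamma(\cdot)$), one has $\rh_\Gamma(X,\mu) \le \rh_K(X,\mu)$ for every free ergodic action. It therefore suffices to bound $\rh_{F \times G}$ from above, and the key inequality I would establish is the finite-index scaling
\[ \rh_{F \times G}(X,\mu) \le \tfrac{1}{n}\, \rh_G(X,\mu), \]
for the restriction to $H \cong G$, mirroring the classical Kolmogorov--Sinai identity $h_H = [K:H]\cdot h_K$ for the subgroup $H$ of index $n$ in $K$.

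To prove this inequality I would start from a near-optimal $G$-generating partition $\gamma$ with finite alphabet and $\sH(\gamma) \le \rh_G(X,\mu)+\delta$, and build a $K$-generating partition of Shannon entropy close to $\sH(\gamma)/n$. The factor $1/n$ comes from the $n$-fold redundancy supplied by the free $F$-action: fixing a Borel fundamental domain for $F$ (of measure $1/n$) and a large finite ``tower'' $S \subseteq G$, I would code the values of $\gamma$ on each $F \times S$ block across the $n|S|$ sites of that block, together with a marker locating the block. The per-site cost of the $\gamma$-information is $\sH(\gamma)/n$, while the marker contributes $O\big((\log(n|S|))/(n|S|)\big)$, which tends to $0$ as the tower $S$ grows; the requisite approximate tiling of $G$ is furnished by the Ab\'{e}rt--Weiss / tower technology underlying Theorems \ref{intro:krieger} and \ref{intro:bkrieger}. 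The resulting partition $\alpha$ has the property that its $K$-translates recover $\gamma$, so $\salg_K(\alpha) = \Borel(X)$, and $\sH(\alpha) \le \sH(\gamma)/n + o(1)$.

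With this in hand the proof concludes quickly in the generic case. The restriction to $H$ of our ergodic $\Gamma$-action need not be ergodic, but each ergodic component is a free $G$-action; when these components have finite Rokhlin entropy the hypothesis $\suprh{G} < \infty$ bounds each by $M := \suprh{G}$, and a measurable selection of near-optimal component generators (with the finite subgroup $F$ permuting the components) assembles into a global $G$-generator of entropy at most $M + \delta$. Feeding this into the scaling inequality gives $\rh_\Gamma(X,\mu) \le \rh_{F\times G}(X,\mu) \le (M+\delta)/n + o(1)$. Since $P$ contains finite subgroups of arbitrarily large order $n$, letting $n \to \infty$ forces $\rh_\Gamma(X,\mu) = 0$; as the free ergodic finite-entropy action was arbitrary, $\suprh{P \times G} = 0$.

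The main obstacle is the case in which the restricted $G$-action has components of \emph{infinite} Rokhlin entropy, which is exactly what occurs for Bernoulli shifts: $B^{P\times G}$ restricts to $H$ as a Bernoulli $G$-shift over the infinite base $B^{P}$, so the scaling inequality becomes vacuous. Here one must reduce to finite-base data: by Theorem \ref{intro:finite} it is enough to treat finite-base Bernoulli shifts, and I would approach these through Theorem \ref{intro:infinite} (whose dichotomy says infinite-base Bernoulli $G$-shifts have Rokhlin entropy $0$ or $\infty$ according to whether $G$ admits a positive-entropy action) together with the upper-semicontinuity of Corollary \ref{intro:mups}, approximating $\rh_\Gamma(L^\Gamma,\lambda^\Gamma)$ by finite-subgroup models on which the scaling argument does apply. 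Controlling the marker overhead uniformly as $n \to \infty$, and globalising the component generators measurably through the ergodic decomposition while $F$ moves those components, are the delicate technical points on which the argument stands or falls.
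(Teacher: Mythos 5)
Your opening reduction---via Theorem \ref{intro:finite} it suffices to prove $\rh_{P\times G}(L^{P\times G},\lambda^{P\times G})=0$ for a finite base $L$---matches the paper, and the guiding idea of using a finite subgroup $T\leq P$ of order $n$ to divide entropy by $n$ is the right one. But the proposal has a genuine gap exactly where you flag ``the main obstacle,'' and the fix you sketch there is not an argument. Your scaling inequality $\rh_{F\times G}(X,\mu)\leq \frac{1}{n}\rh_G(X,\mu)$ is vacuous for the only actions you actually need to treat: when $\suprh{G}>0$, the restriction to $G$ of the finite-base Bernoulli $(P\times G)$-shift is a $G$-Bernoulli shift over the infinite-entropy base $L^P$, which by Theorem \ref{intro:infinite} has infinite Rokhlin entropy (and its ergodic component is not a finite-entropy action, so $\suprh{G}$---a supremum over finite-entropy actions only---bounds nothing). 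Saying one should ``approximate by finite-subgroup models'' via Corollary \ref{intro:mups} does not identify which finite-entropy free $G$-action carries the information to be compressed; producing that action is the heart of the proof and is missing here. A secondary concern is that your route to the scaling inequality relies on quasi-tiling $G$ by a large finite set $S$ with markers, which is not available for general countable groups (the tower technology of Part I lives in the pseudo-group of the orbit relation, not in $G$ itself).

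The paper supplies the missing object as follows. With $T\leq P$ finite, $\Gamma=P\times G$, and $\cL$ the canonical partition of $L^\Gamma$, take a fundamental domain $M$ for the $T$-action coming from a free factor $Y$ with $\rh_\Gamma(Y,\nu)<\epsilon$ (Theorem \ref{thm:robin}), so that knowing $M$ costs less than $\epsilon$. From $\cL^T$ and $M$ one builds a \emph{finite} partition $\xi$ with $|L|^{|T|}$ classes, each class $T$-invariant, satisfying $\salg_\Gamma(\xi)\vee\cF=\Borel(L^\Gamma)$. The $G$-factor $W$ generated by $\salg_G(\xi)$ is a free ergodic $G$-action with $\rh_G(W,\omega)\leq\sH(\xi)<\infty$, hence $\rh_G(W,\omega)\leq\suprh{G}<\log k$; Theorem \ref{intro:krieger} then gives a $k$-piece $G$-generating partition $\beta$ of $W$ whose classes are automatically $T$-invariant (since $G$ and $T$ commute and $\xi$ has $T$-invariant classes). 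The compression is then simply $\alpha=\{L^\Gamma\setminus M\}\cup(\beta\res M)$, and $\sH(\alpha\given\cF)\leq\frac{1}{|T|}\log k<\epsilon$. Note that the $T$-invariance of the classes of $\beta$ is precisely what a fundamental-domain compression needs and what your coding scheme does not address: for a general $G$-generating $\gamma$ one has $t\cdot(\gamma\res M)=(t\cdot\gamma)\res(t\cdot M)$, which differs from $\gamma\res(t\cdot M)$, so restricting $\gamma$ to $M$ genuinely loses information unless its classes are $T$-invariant.
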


Thus $(\forall G \ \mathbf{POS}) \Rightarrow (\forall G \ \mathbf{INF})$.

In Section \ref{sec:inf} we first establish a formula for the Rokhlin entropy of inverse limits of actions (Theorem \ref{thm:ks}), and then we use this formula together with Theorem \ref{intro:finite} in order to study {\bf RBS} when $\sH(L, \lambda) = \infty$. In the case $\sH(L, \lambda) = \infty$ we obtain a result stronger than Theorem \ref{intro:finite}. This is surprising from a historical perspective, since when Kolmogorov defined entropy in 1958 he could only handle Bernoulli shifts with a finite Shannon entropy base \cite{Ko58,Ko59}. It was not until the improvements of Sinai that infinite Shannon entropy bases could be considered \cite{Si59}. Similarly, when Bowen defined sofic entropy he studied Bernoulli shifts with both finite and infinite Shannon entropy bases \cite{B10b}, but he was only fully successful in the finite case. The infinite case was resolved through improvements by Kerr and Li \cite{KL11a,KL11b,Ke13}.

\begin{thm} \label{intro:infinite}
Let $G$ be a countably infinite group and let $(L, \lambda)$ be a standard probability space with $\sH(L, \lambda) = \infty$. Then $\rh_G(L^G, \lambda^G) = \infty$ if and only if there exists a free ergodic {\pmp} action $G \acts (X, \mu)$ with $\rh_G(X, \mu) > 0$.
\end{thm}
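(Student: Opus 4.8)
The plan is to treat the two implications separately; the forward one is immediate and the reverse one carries all the content. For the forward direction, suppose $\rh_G(L^G, \lambda^G) = \infty$. Since $G$ is infinite and $\sH(L,\lambda) = \infty$ forces $\lambda$ to be non-trivial, the shift $(L^G, \lambda^G)$ is itself a free ergodic {\pmp} action, and it satisfies $\rh_G(L^G, \lambda^G) = \infty > 0$. Hence it directly witnesses the existence of a free ergodic action of positive Rokhlin entropy, and there is nothing further to prove. All the work is in the reverse direction, which I would phrase contrapositively: assuming $\rh_G(L^G, \lambda^G) < \infty$, I would show that \emph{every} free ergodic {\pmp} action $G \acts (Y, \nu)$ has $\rh_G(Y, \nu) = 0$.

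The main tool for this is the Kolmogorov--Sinai formula of Theorem \ref{intro:ks}. Choose finite partitions $P_0 \leq P_1 \leq \cdots$ of the base with $\bigvee_n P_n = \Borel(L)$ and $\sH(P_n) \to \sH(L,\lambda) = \infty$, and let $\alpha_n$ be the partition of $L^G$ recording the $P_n$-class of the coordinate at $1_G$. Then $(\alpha_n)$ is increasing, $\sH(\alpha_n) < \infty$, and $\bigvee_n \salg_G(\alpha_n) = \Borel(L^G)$, so Theorem \ref{intro:ks} applies. Its dichotomy asserts that $\rh_G(L^G, \lambda^G) = \infty$ unless the first (``jump'') expression vanishes, so the hypothesis $\rh_G(L^G, \lambda^G) < \infty$ forces that expression to equal $0$. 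Unwinding this, for each fixed $n$ and each $\epsilon > 0$ one can, for arbitrarily large $m$, find a finite $T \subseteq G$ and a partition $\beta \leq \alpha_k^T$ whose translates almost determine $\alpha_m$ (that is, $\sH(\alpha_m \given \beta^T) < \epsilon$) while $\beta$ carries a vanishing amount of information beyond $\alpha_n$ (that is, $\sH(\beta \given \alpha_n^T)$ is arbitrarily small). In other words, the \emph{infinite}-entropy shift would admit generating partitions of arbitrarily small Shannon entropy: it is ``infinitely compressible.''

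The heart of the proof, and the step I expect to be the main obstacle, is to convert this infinite compressibility of $(L^G, \lambda^G)$ into the conclusion $\rh_G(Y, \nu) = 0$ for an arbitrary free ergodic target $(Y, \nu)$. Here I would exploit that the arguments underlying Theorems \ref{intro:krieger} and \ref{intro:bkrieger} are carried out inside the pseudo-group of the orbit equivalence relation, so that the super-efficient codings produced above can be transported --- via co-induction and the upper-semicontinuity of Corollary \ref{intro:mups} used to pass to limits --- into generating partitions of $(Y, \nu)$ of arbitrarily small Shannon entropy. Making this transport rigorous is the delicate point, and one cannot shortcut it by continuity from below: if one knew $\rh_G(L^G, \lambda^G) = \sup_n \rh_G(K_n^G, \kappa_n^G)$ for the finite coarsenings $(K_n, \kappa_n)$ of the base, the conclusion would immediately upgrade to $\suprh{G} = \infty$ and thereby settle the open implication $\mathbf{POS} \Rightarrow \mathbf{INF}$. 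The entire value of the jump in Theorem \ref{intro:ks} is that it reaches $\rh_G(L^G, \lambda^G) = \infty$ \emph{without} establishing such continuity. As a clean, unconditional check in the easy case, realizing each $(K_n, \kappa_n)$ as a factor Bernoulli shift and combining Theorem \ref{intro:finite} with monotonicity of Rokhlin entropy under factors \cite{S14} already gives $\rh_G(L^G, \lambda^G) \geq \suprh{G}$, so the sub-case $\suprh{G} = \infty$ of the reverse implication is immediate; the genuine content is to handle $\mathbf{POS}$ when $\suprh{G}$ may be finite, which is exactly what the transport argument above is designed to do.
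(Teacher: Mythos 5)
Your easy direction is fine, but the hard direction has a genuine gap: the step you yourself flag as ``the main obstacle'' --- transporting the compressibility of $(L^G,\lambda^G)$ into an arbitrary free ergodic action $(Y,\nu)$ ``via co-induction and upper-semicontinuity'' --- is not an argument, and no mechanism in the paper performs such a transport. Corollary \ref{intro:mups} varies the measure on a fixed shift space $L^G$ with $L$ finite; it does not move information between two unrelated actions, and co-induction is not developed here at all. Two subsidiary claims are also wrong. First, the vanishing of the jump expression in Theorem \ref{intro:ks} does \emph{not} say that $(L^G,\lambda^G)$ admits generating partitions of arbitrarily small Shannon entropy: it says that $\beta$ carries little information \emph{beyond} $\alpha_n^T$, so the compression is only relative to the finite-entropy partition $\alpha_n$, and what one recovers is $\rh_G(L^G,\lambda^G)<\infty$, not $=0$. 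Second, your ``clean, unconditional check'' invokes monotonicity of Rokhlin entropy under factors ($\rh_G(L^G,\lambda^G)\geq \rh_G(K_n^G,\kappa_n^G)$), which is not proved in \cite{S14} or here and is essentially an open problem --- indeed the entire apparatus of outer Rokhlin entropy and Corollary \ref{intro:cpre} exists precisely because one cannot bound the Rokhlin entropy of a factor from below by that of the extension.

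The paper's proof runs in the direct (non-contrapositive) direction and avoids any such transport. Given a free ergodic action with positive Rokhlin entropy, it first uses Theorem \ref{thm:ks} and Corollary \ref{cor:kolm} to extract a free factor $(Y,\nu)$ with $0<c=\rh_G(Y,\nu)<\infty$. It then applies Lemma \ref{lem:carve} to write $\Borel(L)=\bigvee_n\salg(\cL_n)$ with finite partitions satisfying $\sH(\cL_m\given\bigvee_{n\neq m}\cL_n)\geq c$, and uses Proposition \ref{prop:finite} --- the \emph{relative} version of Theorem \ref{intro:finite}, computing $\rh_G(L_{[0,k]}^G,\lambda_{[0,k]}^G\given \cL_{[0,k],m}^G)=\min(\sH(\cL_{[0,k]}\given\cL_{[0,k],m}),\suprh{G})\geq\min(c,c)=c$ --- to show that the jump expression (\ref{eqn:ks0}) for $(L^G,\lambda^G)$ is at least $c/2>0$, whence Theorem \ref{thm:ks} gives $\rh_G(L^G,\lambda^G)=\infty$. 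The point you correctly sense --- that the jump clause of Theorem \ref{intro:ks} is what lets one reach $\infty$ without proving continuity from below --- is exactly how the paper uses it, but the input that makes the jump expression positive is the relative Bernoulli computation of Proposition \ref{prop:finite}, not a transport of codings out of the shift.
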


Thus, if $\sH(L, \lambda) = \infty$ then $\rh_G(L^G, \lambda^G)$ is either $0$ or infinity.

It follows from Theorems \ref{intro:finite} and \ref{intro:infinite} that for every countably infinite group $G$
$$\mathbf{INF} \Rightarrow \mathbf{RBS}.$$

By putting all of our results together, we obtain the following.

\begin{cor} \label{intro:posall}
Assume that every countably infinite group $G$ admits a free ergodic {\pmp} action with $\rh_G(X, \mu) > 0$. Then:
\begin{enumerate}
\item[\rm (i)] $\rh_G(L^G, \lambda^G) = \sH(L, \lambda)$ for every countably infinite group $G$ and every probability space $(L, \lambda)$;
\item[\rm (ii)] Every Bernoulli shift over any countably infinite group has completely positive Rokhlin entropy;
\item[\rm (iii)] Gottschalk's surjunctivity conjecture is true;
\item[\rm (iv)] Kaplansky's direct finiteness conjecture is true.
\end{enumerate}
\end{cor}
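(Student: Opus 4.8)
The plan is to show that the hypothesis—that every countably infinite group $G$ admits a free ergodic {\pmp} action of positive Rokhlin entropy—is exactly the statement $(\forall G \ \mathbf{POS})$, and then to chain together the theorems already proved in the excerpt. The key engine is Theorem \ref{intro:kill}, which upgrades $\mathbf{POS}$ to $\mathbf{INF}$: I would first argue that for an arbitrary countably infinite group $G$, the product group $P \times G$ (with $P$, say, the locally finite group $\bigoplus_{n} \Z/n\Z$, or any countable group with arbitrarily large finite subgroups) again admits a free ergodic action of positive Rokhlin entropy by hypothesis, and that $\suprh{G} < \infty$ would force $\suprh{P \times G} = 0$, contradicting $\mathbf{POS}$ for $P \times G$. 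Hence $\suprh{G} = \infty$ for every $G$, i.e.\ $(\forall G \ \mathbf{POS}) \Rightarrow (\forall G \ \mathbf{INF})$.

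With $\mathbf{INF}$ in hand for every $G$, I would establish part (i) by splitting on whether $\sH(L, \lambda)$ is finite. When $\sH(L, \lambda) < \infty$, Theorem \ref{intro:finite} gives $\rh_G(L^G, \lambda^G) = \min(\sH(L,\lambda), \suprh{G})$, and since $\suprh{G} = \infty$ the minimum is simply $\sH(L, \lambda)$. When $\sH(L, \lambda) = \infty$, Theorem \ref{intro:infinite} asserts that $\rh_G(L^G, \lambda^G) = \infty$ precisely when $\mathbf{POS}$ holds for $G$; since $\mathbf{POS}$ holds by hypothesis, $\rh_G(L^G, \lambda^G) = \infty = \sH(L, \lambda)$. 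Together these cases yield $\mathbf{RBS}$ for every countably infinite $G$, which is part (i).

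Parts (ii)--(iv) then follow as corollaries of $\mathbf{RBS}$. Having verified that $\rh_G(L^G, \lambda^G) = \sH(L, \lambda)$ for all $(L, \lambda)$, the hypothesis of Corollary \ref{intro:cpre} is met, giving completely positive Rokhlin entropy for every Bernoulli shift—this is part (ii). Likewise the hypothesis of Corollary \ref{intro:gott} is met, so $G$ satisfies both Gottschalk's surjunctivity conjecture and Kaplansky's direct finiteness conjecture; since $G$ was arbitrary these hold for all countable groups, giving parts (iii) and (iv). (One should note the finite-versus-infinite bookkeeping: the corollaries are stated for countably infinite $G$, but Gottschalk and Kaplansky for finite groups are classical, so the universal statements follow.)

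The only genuinely delicate point is confirming that Theorem \ref{intro:kill} can be applied to \emph{every} $G$ to extract $\mathbf{INF}$. The subtlety is that $\mathbf{POS}$ is assumed for all groups including $P \times G$, and one must check that $P = \bigoplus_n \Z/n\Z$ (or another suitable locally finite group) indeed contains arbitrarily large finite subgroups and that $P \times G$ is again countably infinite—both immediate—so that the contrapositive of Theorem \ref{intro:kill} forces $\suprh{G} = \infty$. I expect this dichotomy-closing step to be the main conceptual hinge; the rest is a direct assembly of the stated theorems.
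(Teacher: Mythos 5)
Your proposal follows essentially the same route as the paper: Theorem \ref{intro:kill} upgrades $(\forall G \ \mathbf{POS})$ to $(\forall G \ \mathbf{INF})$, Theorems \ref{intro:finite} and \ref{intro:infinite} then give (i) by splitting on whether $\sH(L,\lambda)$ is finite, and Corollaries \ref{intro:cpre} and \ref{intro:gott} give (ii)--(iv).

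The one step you elide is the passage from ``$P \times G$ admits a free ergodic {\pmp} action of positive Rokhlin entropy'' to ``this contradicts $\suprh{P \times G} = 0$.'' Since $\suprh{\cdot}$ is by definition a supremum taken only over free ergodic actions with \emph{finite} Rokhlin entropy, the action witnessing $\mathbf{POS}$ for $P \times G$ could a priori have infinite Rokhlin entropy and contribute nothing to that supremum, so the contradiction is not immediate from the definitions. The paper closes exactly this gap with Corollary \ref{cor:infjump}, which shows that the existence of a free ergodic action with infinite Rokhlin entropy already implies the existence of one with finite positive Rokhlin entropy, hence $\suprh{P \times G} > 0$. With that citation inserted, your argument is complete and coincides with the paper's.
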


This corollary indicates that the validity of $(\forall G \ \mathbf{POS})$ should be considered an important open problem.

Finally, for convenience to the reader we summarize the implications we uncovered in the two lines below:
$$\mathbf{INF} \Rightarrow \mathbf{RBS} \Rightarrow \mathbf{INV} + \mathbf{CPE} + \mathbf{GOT} + \mathbf{KAP}$$
$$(\forall G \ \mathbf{POS}) \Rightarrow (\forall G \ \mathbf{INF}).$$ 

\subsection*{Acknowledgments}
This research was partially supported by the National Science Foundation Graduate Student Research Fellowship under Grant No. DGE 0718128. The author thanks his advisor, Ralf Spatzier, for numerous productive conversations, Tim Austin for many suggestions to improve the paper, and Damien Gaboriau for helpful discussions. Finally, the author thanks Lewis Bowen for pointing out that Corollary \ref{intro:sofic} provides an answer to the question \cite[Question 5.4]{Ch13} asked by N.-P. Chung.

\section{Preliminaries} \label{sec:prelim}

Throughout this paper, whenever working with a probability space $(X, \mu)$ we will generally ignore sets of measure zero. In particular, we write $A = B$ for $A, B \subseteq X$ if their symmetric difference is null. Similarly, we will use the term \emph{probability vector} more freely than described in the introduction. A probability vector $\pv = (p_i)$ will be any finite or countable ordered tuple of non-negative real numbers which sum to $1$ (so some terms $p_i$ may be $0$).

Every probability space $(X, \mu)$ which we consider will be assumed to be standard. In particular, $X$ will be a standard Borel space. A well-known property of standard Borel spaces is that they are \emph{countably generated} \cite[Prop. 12.1]{K95}, meaning there is a sequence $B_n \subseteq X$ of Borel sets such that $\Borel(X)$ is the smallest $\sigma$-algebra containing all of the sets $B_n$. In particular, every sub-$\sigma$-algebra $\cF$ is countably generated mod $\mu$-null sets, since the factor $(Y, \nu)$ of $(X, \mu)$ associated to $\cF$ is standard. For $\mathcal{C} \subseteq \Borel(X)$, we let $\salg(\mathcal{C})$ denote the smallest sub-$\sigma$-algebra containing $\mathcal{C} \cup \{X\}$ and the $\mu$-null sets (not to be confused with the notation $\salg_G(\mathcal{C})$ from the introduction). When $G \acts X$ is a Borel action, we write $\E_G(X)$ for the collection of ergodic invariant Borel probability measures on $X$.

For a countable ordered partition $\alpha = \{A_i : 0 \leq i < |\alpha|\}$ we let $\dist(\alpha)$ denote the probability vector $\pv$ satisfying $p_i = \mu(A_i)$. For two partitions $\alpha$ and $\beta$ we write $\alpha \geq \beta$ if $\alpha$ is finer than $\beta$. We let $\HPrt$ denote the set of countable Borel partitions $\alpha$ with $\sH(\alpha) < \infty$. The space $\HPrt$ is a complete separable metric space \cite[Fact 1.7.15]{Do11} under the \emph{Rokhlin metric} $\dR_\mu$ defined by
$$\dR_\mu(\alpha, \beta) = \sH(\alpha \given \beta) + \sH(\beta \given \alpha).$$
We refer the reader to Appendix \ref{sec:metric} for some of the basic properties of this metric.

Let $(X, \mu)$ be a probability space, and let $\cF$ be a sub-$\sigma$-algebra. Let $\pi : (X, \mu) \rightarrow (Y, \nu)$ be the factor associated to $\cF$, and let $\mu = \int \mu_y \ d \nu(y)$ be the disintegration of $\mu$ over $\nu$. For a countable Borel partition $\alpha$ of $X$, the \emph{conditional Shannon entropy} of $\alpha$ relative to $\cF$ is
$$\sH(\alpha \given \cF) = \int_Y \sum_{A \in \alpha} -\mu_y(A) \cdot \log \mu_y(A) \ d \nu(y) = \int_Y \sH_{\mu_y}(\alpha) \ d \nu(y).$$
When necessary, we will write $\sH_\mu(\alpha \given \cF)$ to emphasize the measure. For a partition $\beta$ of $X$ we set $\sH(\alpha \given \beta) = \sH(\alpha \given \salg(\beta))$. For $B \subseteq X$ we write
$$\sH_B(\alpha \given \cF) = \sH_{\mu_B}(\alpha \given \cF),$$
where $\mu_B$ is the normalized restriction of $\mu$ to $B$ defined by $\mu_B(A) = \mu(A \cap B) / \mu(B)$. Note that if $\beta \subseteq \cF$ is a countable partition of $X$ then
$$\sH(\alpha \given \cF) = \sum_{B \in \beta} \mu(B) \cdot \sH_B(\alpha \given \cF).$$
In particular, $\sH(\alpha \given \beta) = \sum_{B \in \beta} \mu(B) \cdot \sH_B(\alpha)$.

We will need the following standard properties of Shannon entropy (proofs can be found in \cite{Do11}, specifically Equation 1.3.2 and Facts 1.6.24, 1.6.27, 1.6.38, and 1.6.39):

\begin{lem} \label{lem:shan}
Let $(X, \mu)$ be a standard probability space, let $\alpha$ and $\beta$ be countable Borel partitions of $X$, and let $\cF$, $\Sigma$, and $(\cF_n)_{n \in \N}$ be sub-$\sigma$-algebras. Then
\begin{enumerate}
\item[\rm (i)] $\sH(\alpha \given \cF) = 0$ if and only if $\alpha \subseteq \cF$ mod null sets;
\item[\rm (ii)] $\sH(\alpha \given \cF) \leq \log |\alpha|$;
\item[\rm (iii)] if $\alpha \geq \beta$ then $\sH(\alpha \given \cF) \geq \sH(\beta \given \cF)$;
\item[\rm (iv)] if $\Sigma \subseteq \cF$ then $\sH(\alpha \given \Sigma) \geq \sH(\alpha \given \cF)$;
\item[\rm (v)] $\sH(\alpha \vee \beta \given \cF) = \sH(\beta \given \cF) + \sH(\alpha \given \beta \vee \cF)$;
\item[\rm (vi)] if $\sH(\alpha) < \infty$ then $\sH(\alpha \given \cF) = \sH(\alpha)$ if and only if $\alpha$ and $\cF$ are independent;
\item[\rm (vii)] $\sH(\alpha \given \cF) = \sup_\beta \sH(\beta \given \cF)$ where $\beta$ ranges over all finite partitions coarser than $\alpha$;
\item[\rm (viii)] $\sH(\alpha \given \bigvee_{n \in \N} \cF_n) = \inf_{n \in \N} \sH(\alpha \given \cF_n)$ if the $\cF_n$'s are increasing and the right-hand side is finite;
\item[\rm (ix)] if $\sum_n \sH(\alpha_n) < \infty$ then $\bigvee_{n \in \N} \alpha_n$ is essentially countable and $\sH(\bigvee_{n \in \N} \alpha_n) \leq \sum_n \sH(\alpha_n)$.
\end{enumerate}
\end{lem}

A \emph{pre-partition} of $X$ is a collection of pairwise-disjoint subsets of $X$. We say that a partition $\beta$ extends a pre-partition $\alpha$, written $\beta \sqsupseteq \alpha$, if there is an injection $\iota : \alpha \rightarrow \beta$ with $A \subseteq \iota(A)$ for every $A \in \alpha$. Equivalently, $\beta \sqsupseteq \alpha$ if and only if the restriction of $\beta$ to $\cup \alpha$ coincides with $\alpha$. For a Borel pre-partition $\alpha$, we define the \emph{reduced $\sigma$-algebra} $\ralg_G(\alpha)$ to be the collection of Borel sets $R \subseteq X$ such that there is a conull $X' \subseteq X$ satisfying:
\begin{quote}
for every $r \in R \cap X'$ and $x \in X' \setminus R$ there is $g \in G$ with $g \cdot r, g \cdot x \in \cup \alpha$ and with $g \cdot r$ and $g \cdot x$ lying in distinct classes of $\alpha$.
\end{quote}
It is a basic exercise to verify that $\ralg_G(\alpha)$ is indeed a $\sigma$-algebra. We note two basic lemmas related to reduced $\sigma$-algebras which we will need.

\begin{lem}[{\cite[Lem.2.2]{S14}}] \label{LEM EXT}
Let $G \acts (X, \mu)$ be a {\pmp} action, and let $\alpha$ be a pre-partition. If $\beta$ is a partition and $\beta \sqsupseteq \alpha$ then $\salg_G(\beta) \supseteq \ralg_G(\alpha)$.
\end{lem}

\begin{lem} \label{LEM RFACT}
Let $G \acts (X, \mu)$ be a {\pmp} action and let $G \acts (Y, \nu)$ be a factor of $(X, \mu)$ under the map $\pi : (X, \mu) \rightarrow (Y, \nu)$. If $\alpha$ is a countable pre-partition of $Y$ then $\pi^{-1}(\ralg_G(\alpha)) \subseteq \ralg_G(\pi^{-1}(\alpha))$.
\end{lem}

\begin{proof}
Fix $S \in \ralg_G(\alpha)$ and set $R = \pi^{-1}(S)$. By definition, there is a conull $Y' \subseteq Y$ so that for all $s \in S \cap Y'$ and all $y \in Y' \setminus S$ there is $g \in G$ with $g \cdot s, g \cdot y \in \cup \alpha$ and with $g \cdot s$ and $g \cdot y$ lying in distinct classes of $\alpha$. Let $X'$ be the conull set $\pi^{-1}(Y')$ and pick any $r \in R \cap X'$ and $x \in X' \setminus R$. Then $\pi(r) \in S \cap Y'$ and $\pi(x) \in Y' \setminus S$. So there is $g \in G$ with $\pi(g \cdot r), \pi(g \cdot x) \in \cup \alpha$ and with $\pi(g \cdot r)$ and $\pi(g \cdot x)$ lying in distinct classes of $\alpha$. Clearly then $g \cdot r, g \cdot x \in \cup \pi^{-1}(\alpha)$ and $g \cdot r$ and $g \cdot x$ are in distinct classes of $\pi^{-1}(\alpha)$. Therefore $R \in \ralg_G(\pi^{-1}(\alpha))$.
\end{proof}

For a {\pmp} action $G \acts (X, \mu)$ and a $G$-invariant sub-$\sigma$-algebra $\cF$, we let $\cJ$ denote the $\sigma$-algebra of $G$-invariant sets and we define the \emph{relative Rokhlin entropy} of $G \acts (X, \mu)$ relative to $\cF$, denoted $\rh_G(X, \mu \given \cF)$, as
$$\inf \Big\{ \sH(\alpha \given \cF \vee \cJ) \: \alpha \text{ is a countable Borel partition and } \salg_G(\alpha) \vee \cF \vee \cJ = \Borel(X) \Big\}.$$
Since we only work with ergodic actions here, $\cJ$ will always be trivial and hence
$$\rh_G(X, \mu \given \cF) = \inf \Big\{ \sH(\alpha \given \cF) : \alpha \text{ a countable partition and } \salg_G(\alpha) \vee \cF = \Borel(X)\}.$$
When $G$ is amenable and the action is free, the relative Rokhlin entropy coincides with relative Kolmogorov--Sinai entropy \cite{S14,AS}. Additionally, similar to the Rudolph--Weiss theorem \cite{RW00}, it is known that $\rh_G(X, \mu \given \cF)$ is invariant under orbit equivalences for which the orbit-change cocycle is $\cF$-measurable \cite{S14}. The following is the strongest version of the main theorem from Part I \cite{S14}.

\begin{thm} \label{thm:relbasic}
Let $G \acts (X, \mu)$ be a {\pmp} ergodic action with $(X, \mu)$ non-atomic, and let $\cF$ be a $G$-invariant sub-$\sigma$-algebra. If $\xi$ is a countable Borel partition of $X$, $0 < r \leq 1$, and $\pv$ is a probability vector with $\sH(\xi \given \cF) < r \cdot \sH(\pv)$, then there is a Borel pre-partition $\alpha = \{A_i : 0 \leq i < |\pv|\}$ with $\mu(\cup \alpha) = r$, $\mu(A_i) = r p_i$ for every $0 \leq i < |\pv|$, and $\salg_G(\xi) \vee \cF \subseteq \ralg_G(\alpha) \vee \cF$.
\end{thm}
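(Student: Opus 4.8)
The plan is to reduce the whole construction to a coding problem along a single ergodic transformation sitting inside the orbit equivalence relation $\cR$ of the action, where the classical entropy-coding machinery is available, and then to transfer the resulting partition back to $G$ while upgrading ordinary generation to the reduced $\sigma$-algebra statement. First I would pass to the relative setting: disintegrating $\mu$ over the factor associated to $\cF$, the hypothesis reads $\sH(\xi \given \cF) = \int \sH_{\mu_y}(\xi)\, d\nu(y) < r \cdot \sH(\pv)$, and all entropy bookkeeping below is carried out fiberwise, treating $\cF$ as side information available to both the ``encoder'' and the ``decoder''. Using Lemma~\ref{lem:shan}(vii) I would also replace $\xi$ by an increasing sequence of finite coarsenings $\xi_n$ with $\sH(\xi_n \given \cF) \to \sH(\xi \given \cF)$, so that the strict surplus $r\cdot \sH(\pv) - \sH(\xi \given \cF) > 0$ persists uniformly and leaves room to encode one $\xi_n$ at each stage of an iteration.

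The heart of the argument is to select an ergodic transformation $T$ in the full group $[\cR]$ (equivalently, to build Kakutani--Rokhlin towers in the pseudo-group of $\cR$) and to code $\xi$ into a pre-partition $\alpha$ of distribution $(r p_i)$ \emph{along the $T$-orbits}, using the classical finite-generator and near-independence coding theorems for $\Z$ (Krieger; Grillenberger--Krengel \cite{GK76}). Here the scaling $r$ has a transparent meaning: one lays down $\alpha$-symbols on a set of measure $r$, namely a density-$r$ subset of each $T$-orbit in the tower decomposition, leaving the complementary set of measure $1 - r$ as the uncoded leftover; the per-symbol entropy budget of such a coding is $\approx r\cdot\sH(\pv)$, which is exactly the quantity bounding $\sH(\xi \given \cF)$ from above. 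Since $\langle T\rangle$-orbits lie inside $G$-orbits, a code whose $T$-translates determine $\xi$ modulo $\cF$ yields $\xi \in \salg_G(\alpha) \vee \cF$, and $G$-translating then gives $\salg_G(\xi) \subseteq \salg_G(\alpha) \vee \cF$.

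Two features must then be upgraded. For the exact constraints $\mu(\cup \alpha) = r$ and $\mu(A_i) = r p_i$ I would run the coding as a limit over the stages $\xi_n$, regarding each pre-partition as the honest partition obtained by adjoining the leftover cell $X \setminus \cup\alpha$ as one extra class so that the iterates live in $\HPrt$, and arranging the $n$-th perturbation to be summable in the Rokhlin metric $\dR$; completeness of $(\HPrt,\dR)$ then produces a limit $\alpha$ with the prescribed distribution, and Lemma~\ref{lem:shan}(vii)--(viii) lets the finite-stage containments pass to $\salg_G(\xi) \subseteq \ralg_G(\alpha)\vee\cF$ in the limit. For the \emph{reduced} $\sigma$-algebra I must produce, for the decoded value, a single set lying in $\bigcap_{\beta \sqsupseteq \alpha}\salg_G(\beta) = \ralg_G(\alpha)$, i.e.\ a decoding that is genuinely \emph{independent} of the extension; this requires the code to be self-synchronizing and uniquely decodable even after an adversarial extension merges a named cell with the leftover. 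Two transfer requirements feed into this: the orbit-change cocycle $x \mapsto (g : Tx = g\cdot x)$ of $T$ must itself be measurable with respect to $\ralg_G(\alpha)\vee\cF$, so that $T$-translates of $\alpha$ remain in that algebra; and the synchronizing markers that locate the tower must be carried by $\alpha$ in a way robust to every extension. When the action is free, Theorem~\ref{thm:robin} supplies a low-entropy free scaffold inside which such a $T$ and its cocycle can be placed and then absorbed; in general the orbit structure must be encoded within $\alpha$ itself.

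The main obstacle is precisely this bootstrapping: the orbit structure along which one codes has to be recoverable from the reduced $\sigma$-algebra of the very partition that the coding produces, with no amenability of $G$ to fall back on (so there is no Rokhlin lemma for $G$, and the tower must be manufactured in the pseudo-group of $\cR$). Reconciling the demand that the synchronizing/cocycle information be readable from $\ralg_G(\alpha)\vee\cF$ with the adversarial behaviour of extensions on the measure-$(1-r)$ leftover — which can disguise a named symbol as uncoded mass — is the delicate core of the proof, and is where the bulk of the technical work, and the careful exploitation of the entropy surplus, has to go.
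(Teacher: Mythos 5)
First, a point of bookkeeping: this paper does not prove Theorem \ref{thm:relbasic} at all --- it is imported verbatim as ``the strongest version of the main theorem from Part I \cite{S14}'' --- so the only comparison available is with the argument of the prequel. Your general setting is the right one: the Part I construction does live almost entirely in the pseudo-group of the orbit equivalence relation (the author says as much after Theorem \ref{intro:bkrieger}), it builds towers from the marker lemma (Lemma \ref{lem:marker}), and it manipulates $\cF$-expressible pseudo-group elements of exactly the kind you describe. But what you have written is a plan that explicitly defers its hardest step --- you yourself say the reconciliation of the reduced-$\sigma$-algebra requirement with adversarial extensions is ``where the bulk of the technical work \dots has to go'' --- and that step is where the theorem lives, so this cannot be counted as a proof.

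Concretely, there are three gaps. (1) The conclusion concerns the \emph{reduced} $\sigma$-algebra $\ralg_G(\alpha) = \bigcap_{\beta \sqsupseteq \alpha} \salg_G(\beta)$: the decoding must succeed for \emph{every} partition $\beta$ extending the pre-partition $\alpha$, including extensions that absorb a coded cell into the measure-$(1-r)$ leftover. You name this difficulty (``self-synchronizing and uniquely decodable even after an adversarial extension'') but supply no mechanism for achieving it; this is the core of the Part I argument, not a finishing touch. (2) Your coding scaffold --- the transformation $T \in [[E_G^X]]$, its towers, and its orbit-change cocycle --- must itself be measurable with respect to $\ralg_G(\alpha) \vee \cF$ for the $T$-translates of $\alpha$ to stay inside that algebra, and your proposed source of a low-entropy scaffold, Theorem \ref{thm:robin}, assumes the action is free. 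Theorem \ref{thm:relbasic} does not assume freeness, and it is applied in this paper to actions that need not be free (e.g.\ to the induced $\Gamma$-action on $B$ in the proof of Lemma \ref{lem:subset}), so freeness is not a removable hypothesis; your fallback ``in general the orbit structure must be encoded within $\alpha$ itself'' is again a restatement of the problem rather than a solution. (3) The reduction to classical $\Z$-coding (Krieger, Grillenberger--Krengel \cite{GK76}) along a single ergodic $T$ does not obviously deliver the stated bound: those theorems control the dynamical entropy of a stationary code, whereas the hypothesis here is the static bound $\sH(\xi \given \cF) < r \cdot \sH(\pv)$ together with the exact constraints $\mu(\cup\alpha) = r$ and $\mu(A_i) = r p_i$, and the Part I proof does not route through $\Z$-theory at all --- it performs a direct, non-stationary combinatorial coding over towers, with a counting argument driven by the entropy surplus and an iteration over finite coarsenings of $\xi$. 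Your outline identifies the right obstacles but resolves none of them.
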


For a {\pmp} ergodic action $G \acts (X, \mu)$, a collection $\mathcal{C}$ of Borel sets, and a $G$-invariant sub-$\sigma$-algebra $\cF$, we define the \emph{outer Rokhlin entropy} as
$$\rh_{G, \mu}(\mathcal{C} \given \cF) = \inf \Big\{ \sH(\alpha \given \cF) : \alpha \text{ is a countable Borel partition and } \mathcal{C} \subseteq \salg_G(\alpha) \vee \cF \Big\}.$$
When $\cF = \{X, \varnothing\}$ we simply write $\rh_{G, \mu}(\mathcal{C})$ for $\rh_{G, \mu}(\mathcal{C} \given \cF)$. If $G \acts (Y, \nu)$ is a factor of $(X, \mu)$, then we define $\rh_{G, \mu}(Y, \nu) = \rh_{G, \mu}(\Sigma)$, where $\Sigma$ is the $G$-invariant sub-$\sigma$-algebra of $X$ associated to $Y$.

A fundamental property of Rokhlin entropy is that it is countably sub-additive. This fact will be critical to nearly all the main theorems of this paper.

\begin{cor} \label{cor:add2}
Let $G \acts (X, \mu)$ be a {\pmp} ergodic action, let $\mathcal{C} \subseteq \Borel(X)$, let $\Sigma$ be a $G$-invariant sub-$\sigma$-algebra, and let $(\cF_n)_{n \in \N}$ be an increasing sequence of $G$-invariant sub-$\sigma$-algebras with $\mathcal{C} \subseteq \bigvee_{n \in \N} \cF_n \vee \Sigma$. Then
\begin{equation} \label{eqn:add2}
\rh_{G, \mu}(\mathcal{C} \given \Sigma) \leq \rh_{G, \mu}(\cF_1 \given \Sigma) + \sum_{n \geq 2} \rh_{G, \mu}(\cF_n \given \cF_{n-1} \vee \Sigma).
\end{equation}
\end{cor}

Note that we do not assume that $\mu$ is non-atomic, and note that one may choose to have $\cF_n = \cF_{n-1}$ for all large $n$ (in which case the sum becomes finite).

\begin{proof}
Assume that $\rh_{G, \mu}(\mathcal{C} \given \Sigma) > 0$ and that the right-hand side of (\ref{eqn:add2}) is finite, as otherwise there is nothing to show.

If $\mu$ has an atom, then by ergodicity $X$ is finite. Note that a partition consisting of a single point and its complement is both generating and of minimum (non-zero) Shannon entropy. This furthermore remains true when working relative to a $G$-invariant sub-$\sigma$-algebra. Therefore for any $\mathcal{D} \subseteq \Borel(X)$ and $G$-invariant sub-$\sigma$-algebra $\Psi$, $\rh_{G, \mu}(\mathcal{D} \given \Psi)$ is $0$ if $\mathcal{D} \subseteq \Psi$ and otherwise is the minimum of $\sH(\alpha \given \Psi)$ among all partitions with $\sH(\alpha \given \Psi) > 0$. From this observation, we see that the first non-zero term on the right-hand side of (\ref{eqn:add2}) is equal to $\rh_{G, \mu}(\mathcal{C} \given \Sigma)$.

Now assume that $\mu$ is non-atomic. Denote the value of the right-hand side of (\ref{eqn:add2}) by $h$. Fix $\epsilon > 0$. Set $\cF_0 = \{X, \varnothing\}$. For each $n \in \N$ fix a partition $\beta_n'$ with $\sH(\beta_n' \given \cF_{n-1} \vee \Sigma) < \rh_{G, \mu}(\cF_n \given \cF_{n-1} \vee \Sigma) + \epsilon / 2^n$ and $\cF_n \subseteq \salg_G(\beta_n') \vee \cF_{n-1} \vee \Sigma$. Apply Theorem \ref{thm:relbasic} to obtain a partition $\beta_n$ with $\sH(\beta_n) < \rh_{G, \mu}(\cF_n \given \cF_{n-1} \vee \Sigma) + \epsilon / 2^n$ and with $\cF_n \subseteq \salg_G(\beta_n) \vee \cF_{n-1} \vee \Sigma$. Then $\sum_{n \in \N} \sH(\beta_n) < h + \epsilon < \infty$, so by Lemma \ref{lem:shan} $\beta = \bigvee_{n \in \N} \beta_n$ is essentially countable and $\sH(\beta) < h + \epsilon$. We have
$$\mathcal{C} \subseteq \bigvee_{n \in \N} \cF_n \vee \Sigma \subseteq \bigvee_{n \in \N} \salg_G(\beta_n) \vee \Sigma \subseteq \salg_G(\beta) \vee \Sigma$$
and thus $\rh_{G, \mu}(\mathcal{C} \given \Sigma) \leq \sH(\beta) < h + \epsilon$. Now let $\epsilon$ tend to $0$.
\end{proof}

In the remainder of the paper, we will simply refer to Corollary \ref{cor:add2} as the property of \emph{sub-additivity}.

We mention one last fact we will need.

\begin{thm}[Seward--Tucker-Drob \cite{ST14}] \label{thm:robin}
Let $G$ be a countably infinite group and let $G \acts (X, \mu)$ be a free {\pmp} action. Then for every $\epsilon > 0$ there is a factor $G \acts (Y, \nu)$ of $(X, \mu)$ such that $\rh_G(Y, \nu) < \epsilon$ and $G$ acts freely on $Y$.
\end{thm}

\section{Translations and independence} \label{sec:trans}

In this section we show that if the Rokhlin entropy of a free ergodic action is finite and is realized by a generating partition, then the action is isomorphic to a Bernoulli shift.

We recall the following well known lemma. This lemma is a special case of a more general result due to Kechris--Solecki--Todorcevic \cite[Prop. 4.2 and Prop. 4.5]{KST99}.

\begin{lem} \label{lem:marker}
Let $G \acts (X, \mu)$ be a {\pmp} action. If $Y \subseteq X$ is Borel and $T \subseteq G$ is finite, then there exists a Borel set $D \subseteq Y$ such that $Y \subseteq T^{-1} T \cdot D$ and $T \cdot d \cap T \cdot d' = \varnothing$ for all $d \neq d' \in D$.
\end{lem}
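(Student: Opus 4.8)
The plan is to recast the two requirements on $D$ in graph-theoretic language and then build $D$ as a maximal independent set. Set $F = T^{-1} T$, a finite symmetric subset of $G$ containing $1_G$, and note that for $d \neq d'$ one has $T \cdot d \cap T \cdot d' = \varnothing$ if and only if $d' \notin F \cdot d$: indeed $s d = t d'$ for some $s, t \in T$ is equivalent to $d' \in (T^{-1} T) d$. Thus the disjointness requirement says exactly that $D$ is \emph{independent} for the relation on $Y$ in which $y$ and $y'$ are adjacent when $y \neq y'$ and $y' \in F \cdot y$; since the action is by Borel automorphisms of the countable group $G$, this is a Borel graph whose vertices each have at most $|F| - 1 \leq |T|^2 - 1$ neighbours. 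Observe first that \emph{maximality} of such an independent set yields the covering condition for free: if $D$ is a maximal independent set and $y \in Y$, then either $y \in D$, whence $y = 1_G \cdot y \in F \cdot D$, or $y \notin D$ and adjoining $y$ destroys independence, so $y \in F \cdot d$ for some $d \in D$; in either case $Y \subseteq F \cdot D = T^{-1} T \cdot D$. So the entire problem reduces to constructing a \emph{Borel} maximal independent set for this bounded-degree Borel graph, which is precisely the content of the cited Kechris--Solecki--Todorcevic results, but which I will produce by hand.

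For the self-contained construction I would fix a refining sequence of countable Borel partitions $\mathcal{P}_0 \leq \mathcal{P}_1 \leq \cdots$ of $X$ separating points, so that $\Borel(X) = \bigvee_n \salg(\mathcal{P}_n)$, and exploit that each conflict-neighbourhood $N(y) = (F \setminus \{1_G\}) \cdot y \cap Y$ is finite. Let $Y_k$ be the Borel set of $y \in Y$ for which every point of $F \cdot y \setminus \{y\}$ lies in a $\mathcal{P}_k$-cell distinct from that of $y$; since the partitions separate points and $N(y)$ is finite, every $y \in Y$ lies in $Y_k$ for all large $k$, and crucially \emph{no two points of the same $\mathcal{P}_k$-cell that both lie in $Y_k$ can be adjacent}. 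Now enumerate all pairs $(k, j)$ with $j$ indexing the cells of $\mathcal{P}_k$, process them in a single countable sequence of sub-stages, and at the sub-stage for cell $P^k_j$ add to $D$ every $y \in P^k_j \cap Y_k$ that is not adjacent to any point already selected. Because $P^k_j \cap Y_k$ has no internal adjacencies, the points added at a given sub-stage are mutually non-adjacent, and by the non-adjacency test they are non-adjacent to everything chosen earlier; hence the resulting $D = \bigcup_{k,j}(\cdots)$ is independent, and it is Borel as a countable union of Borel sets (the "not adjacent to a previously selected point" condition is Borel because $F$ is finite).

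It remains to check maximality, which then delivers the lemma via the observation above. Given $y \in Y \setminus D$, choose $k$ with $y \in Y_k$; at the sub-stage processing $y$'s cell at level $k$, the point $y$ failed to be added, and the only possible reason is that $y$ was adjacent to some point already placed in $D$, so $y \in F \cdot d$ for some $d \in D$. Hence $D$ is a maximal independent set, giving $Y \subseteq T^{-1} T \cdot D$ while the disjointness $T \cdot d \cap T \cdot d' = \varnothing$ for $d \neq d'$ is exactly independence. The main obstacle is purely the bookkeeping that makes the greedy selection simultaneously Borel and maximal; the device that resolves it is the set $Y_k$, which guarantees that within each cell at a sufficiently fine level there are no conflicts to adjudicate, reducing the selection at each sub-stage to the manifestly Borel test of non-adjacency with the previously chosen Borel set. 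I note that the measure $\mu$ plays no role here, as the statement is purely Borel.
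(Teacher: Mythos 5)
Your proof is correct: the reduction of the disjointness condition to independence for the Borel graph induced by $F = T^{-1}T$, the observation that maximality of an independent set gives the covering $Y \subseteq F \cdot D$, and the construction of a Borel maximal independent set by greedily processing the cells of a point-separating refining sequence of countable partitions (restricted to the sets $Y_k$ where a cell contains no conflicts) together constitute the standard Kechris--Solecki--Todorcevic argument. The paper gives no proof of this lemma, deferring to \cite{KST99} and to the self-contained proof in Part I \cite{S14}, and your argument is essentially that same one, so there is nothing to fault beyond the harmless implicit assumption that $T \neq \varnothing$.
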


\begin{lem} \label{lem:part}
Let $G$ be a countably infinite group, let $G \acts (X, \mu)$ be a free {\pmp} action, and let $T \subseteq G$ be finite. Then there is a Borel partition $\xi$ of $X$ such that for every $C \in \xi$ we have $\mu(C) \geq \frac{1}{4} \cdot |T|^{-4}$ and $t \cdot C \cap s \cdot C = \varnothing$ for all $t \neq s \in T$.
\end{lem}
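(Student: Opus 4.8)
The plan is to recast the separation requirement graph-theoretically, produce a partition into good pieces using only finitely many of them, and then repair the measures by a balancing argument. First I would record that the condition ``$t \cdot C \cap s \cdot C = \varnothing$ for all $t \neq s \in T$'' is equivalent to $C \cap g \cdot C = \varnothing$ for every $g$ in the symmetric set $F := T^{-1} T \setminus \{1\}$, which has $|F| \leq |T|^2 - 1$. Thus a set is \emph{good} exactly when it is an independent set in the Borel graph $\Gamma$ on $X$ in which $x$ is joined to $g \cdot x$ for $g \in F$; by freeness each vertex has exactly $|F|$ neighbours, so $\Gamma$ has bounded degree.

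Next I would produce a Borel partition $\{C_1, \ldots, C_k\}$ of $X$ into good pieces with $k \leq |F| + 1 \leq |T|^2$. This can be obtained from Lemma \ref{lem:marker} (a maximal good set $D$ is dominating, whence $X \subseteq T^{-1} T \cdot D$), or equivalently from the finiteness of the Borel chromatic number of the bounded-degree graph $\Gamma$, which is part of the Kechris--Solecki--Todorcevic results cited above. At this stage the pieces are good but their measures are entirely uncontrolled, and many may fall far below the target value $c := \tfrac{1}{4} |T|^{-4}$.

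The core of the argument is then a balancing step, built on the estimate $\mu(F \cdot B) \leq |F| \cdot \mu(B)$. Given a deficient piece $C_j$ (the recipient) and a piece $C_i$ (the donor), we have $\mu(C_i \setminus F C_j) \geq \mu(C_i) - |F| \cdot \mu(C_j)$, and any Borel set $A \subseteq C_i \setminus F C_j$ may be transferred from $C_i$ to $C_j$ while keeping both pieces good: $C_i \setminus A$ is good as a subset of a good set, and $C_j \cup A$ is good because $F = F^{-1}$ ensures no $F$-edge joins $A$ to $C_j$. Since the free action is non-atomic, $A$ can be chosen of any prescribed measure up to $\mu(C_i \setminus F C_j)$. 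I would then iterate: while some piece has measure below $c$, select as donor a piece of measure at least $\tfrac{3}{4} |T|^{-2}$ and move a set of measure $c - \mu(C_j)$ into the deficient piece $C_j$. Such a donor always exists, because the total mass of the sub-$c$ pieces is below $k c \leq |T|^2 c = \tfrac{1}{4} |T|^{-2}$, leaving mass at least $\tfrac{3}{4}$ spread over at most $|T|^2$ larger pieces; and the transfer is feasible since $(|F| + 1) c \leq |T|^2 c = \tfrac{1}{4} |T|^{-2}$. Each step freezes one recipient at measure exactly $c$ while the donor remains above $\tfrac{1}{2}|T|^{-2} > c$, so the number of deficient pieces strictly decreases and the process terminates after at most $|T|^2$ steps, leaving every piece good and of measure at least $c$.

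The main obstacle is precisely the uniform lower bound on the measures: producing good pieces is routine via the cited separation/coloring results, but the pieces these yield can be arbitrarily small, and the difficulty is to redistribute mass so that \emph{every} piece exceeds $\tfrac{1}{4}|T|^{-4}$ \emph{without} destroying goodness of any piece. The features that make the redistribution possible are the bounded-degree estimate $\mu(F \cdot B) \leq |F| \cdot \mu(B)$, the symmetry $F = F^{-1}$, and the freeness/non-atomicity of the action; the stated constant $\tfrac{1}{4}|T|^{-4}$ falls out (with room to spare) from the counting bound $k, |F|+1 \leq |T|^2$.
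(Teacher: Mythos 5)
Your proposal is correct and follows essentially the same route as the paper: first produce a partition into at most roughly $|T|^2$ ``good'' pieces by iterating Lemma \ref{lem:marker} (the paper's version of your maximal-independent-set/coloring step), and then repair the measures by transferring, from a piece of measure at least on the order of $|T|^{-2}$, a subset disjoint from $T^{-1}T$ applied to the deficient piece, which preserves goodness of both pieces and strictly decreases the number of deficient ones. The only differences are cosmetic (graph-coloring language, slightly different constants in the counting and in the donor threshold), and your observation that freeness of the infinite group action forces non-atomicity, which both arguments use implicitly when extracting a transfer set of prescribed measure, is correct.
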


\begin{proof}
If $|T| = 1$ then by setting $\xi = \{X\}$ we are done. So assume $|T| \geq 2$. Since the action is free, the condition $t \cdot C \cap s \cdot C = \varnothing$ for all $t \neq s \in T$ is equivalent to the condition $T \cdot c \cap T \cdot c' = \varnothing$ for all $c \neq c' \in C$. By repeatedly applying Lemma \ref{lem:marker} we can inductively construct disjoint sets $C_1, C_2, \ldots$ such that for every $i$
$$X \setminus (C_1 \cup C_2 \cup \cdots \cup C_{i-1}) \subseteq T^{-1} T \cdot C_i$$
and $T \cdot c \cap T \cdot c' = \varnothing$ for all $c \neq c' \in C_i$. We claim that there is $n \leq |T^{-1} T| + 1$ such that $X = C_1 \cup \cdots \cup C_n$. If not, then there is $x \in X \setminus (C_1 \cup \cdots \cup C_{|T^{-1} T|+1})$. Then $x \in T^{-1} T \cdot C_i$ for every $i$ and hence $T^{-1} T \cdot x$ meets every $C_i$, $1 \leq i \leq |T^{-1} T| + 1$. This contradicts the $C_i$'s being disjoint.

Set $\xi = \{C_i \: 1 \leq i \leq n\}$. If $\mu(C_i) < \frac{1}{4} \cdot |T|^{-4}$ for some $i$, then since $\xi$ is a partition of $X$ with $|\xi| \leq 2 |T|^2$, there must be some $j$ with $\mu(C_j) > \frac{1}{2} |T|^{-2}$. So
$$\mu \Big( C_j \setminus T^{-1} T \cdot C_i \Big) \geq \frac{1}{2 |T|^2} - \frac{|T|^2}{4 |T|^4} = \frac{1}{4 |T|^2} > 2 \cdot \frac{1}{4 |T|^4}.$$
Thus by removing from $C_j$ a subset $B \subseteq C_j \setminus T^{-1} T \cdot C_i$ having measure $\mu(B) = \frac{1}{4} \cdot |T|^{-4}$ and by enlarging $C_i$ to contain $B$, we will have reduced the number of sets in $\xi$ having measure less than $\frac{1}{4} \cdot |T|^{-4}$. This process can be repeated until every set in $\xi$ has measure at least $\frac{1}{4} \cdot |T|^{-4}$.
\end{proof}

We are ready for the main result of this section.

\begin{thm} \label{thm:drop}
Let $G$ be a countably infinite group, let $G \acts (X, \mu)$ be a free {\pmp} ergodic action, and let $\cF$ be a $G$-invariant sub-$\sigma$-algebra. If $\alpha$ is a countable partition, $T \subseteq G$ is finite, $\epsilon > 0$, and $\frac{1}{|T|} \cdot \sH(\alpha^T \given \cF) < \sH(\alpha \given \cF) - \epsilon$, then $\rh_{G, \mu}(\alpha \given \cF) < \sH(\alpha \given \cF) - \epsilon / (16 |T|^3)$.
\end{thm}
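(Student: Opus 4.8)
The goal is to exhibit a single countable partition $\eta$ with $\alpha \subseteq \salg_G(\eta) \vee \cF$ and $\sH(\eta \given \cF) < \sH(\alpha \given \cF) - \epsilon/(16|T|^3)$; since $\rh_{G,X}(\alpha \given \cF)$ is an infimum, any such $\eta$ witnesses the bound, so no appeal to the realization theorem (Theorem \ref{thm:relbasic}) is needed. First note that the hypothesis forces $\sH(\alpha \given \cF) < \infty$, since $\alpha^T \geq \alpha$. Writing $D = |T| \cdot \sH(\alpha \given \cF) - \sH(\alpha^T \given \cF)$ and using $\sH(t \cdot \alpha \given \cF) = \sH(\alpha \given \cF)$ for the $G$-invariant $\cF$, the hypothesis is exactly $D > |T| \epsilon$, so $D$ records the total failure of $\{t \cdot \alpha : t \in T\}$ to be independent over $\cF$.

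The core idea is a local recoding over a single tile. Applying Lemma \ref{lem:part} to $T^{-1}$ produces a Borel set $C$ with $\mu(C) \geq \tfrac14 |T|^{-4}$ whose $T^{-1}$-translates are pairwise disjoint; let $W = T^{-1} \cdot C$. Over $W$ the partition $\alpha$ is equivalent to $\alpha^T$ read off on $C$, so I would let $\eta$ equal $\alpha$ off $W$, equal $\alpha^T$ on $C$, and be trivial on $W \setminus C$. Then $\eta$ recovers $\alpha$, because each forgotten value $\alpha(t^{-1}c)$ is a coordinate of $\alpha^T$ at $c$ and is reached via the $G$-action; and a region-wise entropy computation shows the saving $\sH(\alpha\given\cF) - \sH(\eta\given\cF)$ equals $\mu(C)$ times the local defect $\sum_{t \in T} \sH_C(t \cdot \alpha \given \cF) - \sH_C(\alpha^T \given \cF)$, up to the cost of distinguishing the tile.

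The main obstacle is precisely that $\eta$ must \emph{locate} the tile. Doing this by a disjoint relabelling costs conditional entropy of order $\mu(W)\log(1/\mu(W))$, which dominates the saving $\sim \mu(C)\epsilon$ when $\epsilon$ is small. I would eliminate this cost by making the tile structure essentially free: by Theorem \ref{thm:robin}, pass to a free factor $Y$ with $\rh_G(Y) < \epsilon'$ for a tiny $\epsilon'$, build the markers inside $Y$ by applying Lemma \ref{lem:part} to the free action $G \acts Y$, and set $\cF' = \cF \vee \Sigma$ for $\Sigma$ the sub-$\sigma$-algebra associated to $Y$. Corollary \ref{cor:add2} then charges only $\epsilon'$ for the markers, giving $\rh_{G,X}(\alpha \given \cF) \leq \epsilon' + \rh_{G,X}(\alpha \given \cF')$. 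Since the tiles now lie in $\cF'$, the recoding above carries \emph{no} location overhead and yields $\rh_{G,X}(\alpha \given \cF') \leq \sH(\alpha \given \cF') - \mu(C)\,d(C)$, where $d(C) = \sum_{t \in T} \sH_C(t\cdot\alpha \given \cF') - \sH_C(\alpha^T \given \cF')$.

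It remains to find a tile with large local defect and to reconcile the two base $\sigma$-algebras, which I expect to be the delicate bookkeeping of the argument. Taking the markers to be a whole tile partition $\{C_j\}$ of $X$ (again via Lemma \ref{lem:part} in $Y$), the conditional-entropy identity $\sum_j \mu(C_j)\, d(C_j) = D'$ holds with $D' = |T| \cdot \sH(\alpha \given \cF') - \sH(\alpha^T \given \cF')$, so some tile $C$ has $d(C) \geq D'$ while keeping $\mu(C) \geq \tfrac14|T|^{-4}$. Writing $\Delta = \sH(\alpha \given \cF) - \sH(\alpha \given \cF') \geq 0$ and $\Delta^T = \sH(\alpha^T \given \cF) - \sH(\alpha^T \given \cF') \geq 0$, one has $D' = D - |T|\Delta + \Delta^T$, whence, using $\mu(C)|T| \leq 1$ and $\Delta^T \geq 0$, the crucial compensation $\Delta + \mu(C)D' \geq \mu(C) D \geq \epsilon/(4|T|^3)$. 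Combining the displays gives $\rh_{G,X}(\alpha\given\cF) \leq \epsilon' + \sH(\alpha\given\cF) - \Delta - \mu(C)D' \leq \epsilon' + \sH(\alpha\given\cF) - \epsilon/(4|T|^3)$, which beats the target once $\epsilon'$ is chosen small; the degenerate case $D' \leq 0$ forces $\Delta > \epsilon$ and is settled directly with $\eta = \alpha$ and no recoding. The two points demanding care are this compensation estimate — showing that enlarging $\cF$ to $\cF'$ cannot destroy the entropy gap — and the verification that the recoded $\eta$ genuinely generates $\alpha$ over $\cF'$.
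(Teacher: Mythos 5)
Your proposal is correct and follows essentially the same route as the paper: pass to a low-entropy free factor via Theorem \ref{thm:robin} so that the tiles from Lemma \ref{lem:part} live in an enlarged $\sigma$-algebra $\cF \vee \Sigma$ charged only $\epsilon'$ by Corollary \ref{cor:add2}, select a tile with at least average defect, and recode $\alpha$ on $T^{-1}\cdot C$ as $\alpha^T$ on $C$ (modulo the harmless normalization $1_G \in T$ so that $C \subseteq T^{-1}\cdot C$, which the paper arranges by translating $T$). The only difference is bookkeeping: where the paper splits into cases according to whether $\sH(\alpha \given \cF \vee \Sigma)$ drops by $\epsilon/2$, your compensation inequality $\Delta + \mu(C) D' \geq \mu(C) D$ handles the drop and the defect simultaneously and in fact yields the slightly better constant $\epsilon/(4|T|^3)$.
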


\begin{proof}
By invariance of $\mu$ and $\cF$, $\sH(\alpha^{s T} \given \cF) = \sH(\alpha^T \given \cF)$ for all $s \in G$. So by replacing $T$ with a translate $s T$ we may assume that $1_G \in T$. By Theorem \ref{thm:robin}, there is a factor $G \acts (Z, \eta)$ of $(X, \mu)$ such that the action of $G$ on $Z$ is free and $\rh_G(Z, \eta) < \epsilon / (16 \cdot |T|^3)$. Let $\Sigma$ be the $G$-invariant sub-$\sigma$-algebra of $X$ associated to $Z$. If $\sH(\alpha \given \cF \vee \Sigma) \leq \sH(\alpha \given \cF) - \epsilon / 2$, then by sub-additivity (Corollary \ref{cor:add2})
\begin{align*}
\rh_{G, \mu}(\alpha \given \cF) & \leq \rh_{G, \mu}(\Sigma \given \cF) + \rh_{G, \mu}(\alpha \given \cF \vee \Sigma)\\
 & \leq \rh_G(Z, \eta) + \sH(\alpha \given \cF \vee \Sigma)\\
 & < \frac{\epsilon}{16 \cdot |T|^3} + \sH(\alpha \given \cF) - \frac{\epsilon}{2}\\
 & < \sH(\alpha \given \cF) - \frac{\epsilon}{16 |T|^3},
\end{align*}
and thus we are done. So assume $\sH(\alpha \given \Sigma \vee \cF) > \sH(\alpha \given \cF) - \epsilon / 2$. Note that
$$\frac{1}{|T|} \cdot \sH(\alpha^T \given \cF \vee \Sigma) \leq \frac{1}{|T|} \cdot \sH(\alpha^T \given \cF) < \sH(\alpha \given \cF) - \epsilon < \sH(\alpha \given \cF \vee \Sigma) - \epsilon / 2.$$
By definition the action $G \acts (Z, \eta)$ is free. So we can apply Lemma \ref{lem:part} to obtain a partition $\xi \subseteq \Sigma$ of $X$ such that for every $C \in \xi$ we have $t^{-1} \cdot C \cap s^{-1} \cdot C = \varnothing$ for all $t \neq s \in T$ and $\mu(C) \geq \frac{1}{4} \cdot |T|^{-4}$.

Let $\pi : (X, \mu) \rightarrow (Y, \nu)$ be the factor associated to $\cF \vee \Sigma$, and let $\mu = \int \mu_y \ d \nu(y)$ be the disintegration of $\mu$ over $\nu$. We have
\begin{align*}
\sum_{C \in \xi} & \int_{\pi(C)} \left( \sum_{t \in T} \sH_{\mu_y}(t \cdot \alpha) - \sH_{\mu_y}(\alpha^T) \right) \ d \nu(y) \\
 & = \int_Y \left( \sum_{t \in T} \sH_{\mu_y}(t \cdot \alpha) - \sH_{\mu_y}(\alpha^T) \right) \ d \nu(y) \\
 & = \sum_{t \in T} \sH(t \cdot \alpha \given \cF \vee \Sigma) - \sH(\alpha^T \given \cF \vee \Sigma) \\
 & = |T| \cdot \sH(\alpha \given \cF \vee \Sigma) - \sH(\alpha^T \given \cF \vee \Sigma) \\
 & > |T| \cdot \frac{\epsilon}{2}.
\end{align*}
So we can fix $D \in \xi$ with
$$\int_{\pi(D)} \left( \sum_{t \in T} \sH_{\mu_y}(t \cdot \alpha) - \sH_{\mu_y}(\alpha^T) \right) \ d \nu(y) > |T| \cdot \frac{\epsilon}{2} \cdot \mu(D).$$
Set $R = T^{-1} \cdot D$ and observe that $\mu(R) = |T| \cdot \mu(D)$. Note that for almost-every $y \in Y$ and all $g \in G$ we have $\mu_y(E) = \mu_{g \cdot y}(g \cdot E)$ for Borel $E \subseteq X$ and hence also $\sH_{\mu_y}(\alpha) = \sH_{\mu_{g \cdot y}}(g \cdot \alpha)$. Thus
\begin{align*}
\sH & _R (\alpha \given \cF \vee \Sigma) - \frac{1}{|T|} \cdot \sH_D(\alpha^T \given \cF \vee \Sigma) \\
 & = \frac{1}{\mu(R)} \cdot \int_{T^{-1} \cdot \pi(D)} \sH_{\mu_y}(\alpha) \ d \nu(y) - \frac{1}{|T| \cdot \mu(D)} \cdot \int_{\pi(D)} \sH_{\mu_y}(\alpha^T) \ d \nu(y) \\
 & = \frac{1}{|T| \cdot \mu(D)} \cdot \sum_{t \in T} \int_{t^{-1} \cdot \pi(D)} \sH_{\mu_y}(\alpha) \ d \nu(y) - \frac{1}{|T| \cdot \mu(D)} \cdot \int_{\pi(D)} \sH_{\mu_y}(\alpha^T) \ d \nu(y) \\
 & = \frac{1}{|T| \cdot \mu(D)} \cdot \int_{\pi(D)} \left( \sum_{t \in T} \sH_{\mu_y}(t \cdot \alpha) - \sH_{\mu_y}(\alpha^T) \right) \ d \nu(y) \\
 & > \frac{\epsilon}{2}.
\end{align*}

Define a new partition
$$\beta = \Big( \alpha \res (X \setminus R) \Big) \cup \Big\{ R \setminus D \Big\} \cup \Big( \alpha^T \res D \Big).$$
Observe that $D \subseteq R$ since $1_G \in T$. Let $\gamma$ be the partition of $X$ consisting of the sets $t^{-1} \cdot D$, $t \in T$, and $X \setminus R$. Then $\gamma \subseteq \Sigma$ and $\alpha$ is coarser than
$$\alpha \vee \gamma = \Big( \alpha \res (X \setminus R) \Big) \cup \bigcup_{t \in T} \Big( \alpha \res t^{-1} \cdot D \Big).$$
Since $\alpha \res (X \setminus R) \subseteq \beta$ and for each $t \in T$ the partition $t \cdot (\alpha \res t^{-1} \cdot D) = (t \cdot \alpha \res D)$ of $D$ is coarser than $\alpha^T \res D$, we see that
$$\alpha \leq \alpha \vee \gamma \subseteq \salg_G(\beta) \vee \Sigma.$$
Therefore $\rh_{G, \mu}(\alpha \given \cF \vee \Sigma) \leq \sH(\beta \given \cF \vee \Sigma)$.

Since $R, D \in \Sigma$ and $\mu(R) = |T| \cdot \mu(D) \geq \frac{1}{4} \cdot |T|^{-3}$ we have
\begin{align*}
\sH(\beta \given \cF \vee \Sigma) & = \mu(X \setminus R) \cdot \sH_{X \setminus R}(\alpha \given \cF \vee \Sigma) + \mu(D) \cdot \sH_D(\alpha^T \given \cF \vee \Sigma)\\
 & = \mu(X \setminus R) \cdot \sH_{X \setminus R}(\alpha \given \cF \vee \Sigma) + \mu(R) \cdot \frac{1}{|T|} \cdot \sH_D(\alpha^T \given \cF \vee \Sigma)\\
 & < \mu(X \setminus R) \cdot \sH_{X \setminus R}(\alpha \given \cF \vee \Sigma) + \mu(R) \cdot \sH_R(\alpha \given \cF \vee \Sigma) - \mu(R) \cdot \frac{\epsilon}{2}\\
 & = \sH(\alpha \given \cF \vee \Sigma) - \mu(R) \cdot \frac{\epsilon}{2}\\
 & \leq \sH(\alpha \given \cF \vee \Sigma) - \frac{\epsilon}{8 |T|^3}
\end{align*}
Therefore
\begin{align*}
\rh_{G, \mu}(\alpha \given \cF \vee \Sigma) + \rh_G(Z, \eta) & \leq \sH(\beta \given \cF \vee \Sigma) + \rh_G(Z, \eta)\\
 & < \sH(\alpha \given \cF \vee \Sigma) - \frac{\epsilon}{8 |T|^3} + \frac{\epsilon}{16 \cdot |T|^3}\\
 & \leq \sH(\alpha \given \cF) - \frac{\epsilon}{16 |T|^3}.
\end{align*}
Thus we are done by sub-additivity of Rokhlin entropy.
\end{proof}

We will also need the following variant of Theorem \ref{thm:drop} where we replace both instances of $\sH(\alpha \given \cF)$ with $\sH(\alpha)$.

\begin{cor} \label{cor:drop2}
Let $G$ be a countably infinite group, let $G \acts (X, \mu)$ be a free {\pmp} ergodic action, and let $\cF$ be a $G$-invariant sub-$\sigma$-algebra. If $\alpha$ is a countable partition, $T \subseteq G$ is finite, $\epsilon > 0$, and $\frac{1}{|T|} \cdot \sH(\alpha^T \given \cF) < \sH(\alpha) - \epsilon$, then $\rh_{G, \mu}(\alpha \given \cF) < \sH(\alpha) - \epsilon / (32 |T|^3)$.
\end{cor}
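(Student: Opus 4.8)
The plan is to deduce this variant from Theorem \ref{thm:drop} by reducing to the situation where the two quantities $\sH(\alpha \given \cF)$ and $\sH(\alpha)$ agree up to a controlled error. The key observation is that the hypothesis only differs from that of Theorem \ref{thm:drop} in that the baseline is the unconditional entropy $\sH(\alpha)$ rather than $\sH(\alpha \given \cF)$. Since conditioning can only decrease entropy, we always have $\sH(\alpha \given \cF) \leq \sH(\alpha)$, so the hypothesis $\frac{1}{|T|} \sH(\alpha^T \given \cF) < \sH(\alpha) - \epsilon$ is \emph{weaker} than the corresponding hypothesis of Theorem \ref{thm:drop}. I would split into two cases according to how large the defect $\sH(\alpha) - \sH(\alpha \given \cF)$ is.

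First I would handle the case where $\sH(\alpha) - \sH(\alpha \given \cF) \geq \epsilon/2$. In this case the conclusion is almost immediate: the partition $\alpha$ already carries most of its information modulo $\cF$, so taking $\alpha$ itself as the witnessing partition gives
\begin{equation*}
\rh_{G,X}(\alpha \given \cF) \leq \sH(\alpha \given \cF) \leq \sH(\alpha) - \frac{\epsilon}{2} < \sH(\alpha) - \frac{\epsilon}{32 |T|^3},
\end{equation*}
using that $\alpha \subseteq \salg_G(\alpha) \vee \cF$ trivially, and that $\epsilon/2 > \epsilon/(32|T|^3)$.

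In the complementary case $\sH(\alpha) - \sH(\alpha \given \cF) < \epsilon/2$, I would feed the hypothesis into Theorem \ref{thm:drop} with $\epsilon$ replaced by $\epsilon/2$. Indeed, under this case assumption we have $\sH(\alpha \given \cF) > \sH(\alpha) - \epsilon/2$, so the hypothesis $\frac{1}{|T|}\sH(\alpha^T \given \cF) < \sH(\alpha) - \epsilon$ yields
\begin{equation*}
\frac{1}{|T|} \cdot \sH(\alpha^T \given \cF) < \sH(\alpha) - \epsilon < \sH(\alpha \given \cF) + \frac{\epsilon}{2} - \epsilon = \sH(\alpha \given \cF) - \frac{\epsilon}{2}.
\end{equation*}
Applying Theorem \ref{thm:drop} with the parameter $\epsilon/2$ then gives $\rh_{G,X}(\alpha \given \cF) < \sH(\alpha \given \cF) - (\epsilon/2)/(16|T|^3) = \sH(\alpha \given \cF) - \epsilon/(32|T|^3)$, and since $\sH(\alpha \given \cF) \leq \sH(\alpha)$ this is bounded by $\sH(\alpha) - \epsilon/(32|T|^3)$, as desired. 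The factor of $32 = 2 \cdot 16$ in the statement is exactly what accounts for halving $\epsilon$.

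I do not expect any genuine obstacle here, since the corollary is a bookkeeping consequence of the theorem. The only mild subtlety is choosing the case threshold so that both branches land below $\sH(\alpha) - \epsilon/(32|T|^3)$ simultaneously; the threshold $\epsilon/2$ works precisely because in the first case the gap $\epsilon/2$ already dominates the required $\epsilon/(32|T|^3)$, while in the second case halving $\epsilon$ before invoking Theorem \ref{thm:drop} reproduces the $32|T|^3$ denominator. One should double-check that the hypothesis $\frac{1}{|T|}\sH(\alpha^T \given \cF) < \sH(\alpha)-\epsilon$ together with the case bound indeed produces a strictly positive slack for Theorem \ref{thm:drop}, which the chain of inequalities above confirms.
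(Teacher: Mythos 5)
Your proposal is correct and follows essentially the same route as the paper: split on whether $\sH(\alpha) - \sH(\alpha \given \cF)$ exceeds $\epsilon/2$, handle the large-defect case by the trivial bound $\rh_{G,X}(\alpha \given \cF) \leq \sH(\alpha \given \cF)$, and otherwise apply Theorem \ref{thm:drop} with parameter $\epsilon/2$, which is exactly where the factor $32 = 2 \cdot 16$ comes from.
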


\begin{proof}
If $\sH(\alpha \given \cF) < \sH(\alpha) - \epsilon / 2$ then clearly
$$\rh_{G, \mu}(\alpha \given \cF) \leq \sH(\alpha \given \cF) < \sH(\alpha) - \frac{\epsilon}{32 |T|^3}.$$
So suppose that $\sH(\alpha \given \cF) \geq \sH(\alpha) - \epsilon / 2$. Then
$$\sH(\alpha^T \given \cF) < |T| \cdot \sH(\alpha) - |T| \cdot \epsilon \leq |T| \cdot \sH(\alpha \given \cF) - |T| \cdot \epsilon / 2.$$
In this case we can apply Theorem \ref{thm:drop}.
\end{proof}

We recall the simple fact that a free ergodic {\pmp} action $G \acts (X, \mu)$ is isomorphic to a Bernoulli shift if and only if there is a generating partition whose $G$-translates are mutually independent.

\begin{cor} \label{cor:bernoulli}
Let $G$ be a countably infinite group and let $G \acts (X, \mu)$ be a free {\pmp} ergodic action. If $\alpha$ is a generating partition with $\sH(\alpha) = \rh_G(X, \mu) < \infty$ then $G \acts (X, \mu)$ is isomorphic to a Bernoulli shift and $\alpha$ is a Bernoulli generating partition.
\end{cor}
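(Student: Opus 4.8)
The plan is to show that the $G$-translates of $\alpha$ are mutually independent, whence by the cited fact the action is isomorphic to a Bernoulli shift. Since $\sH(\alpha) < \infty$, mutual independence of the family $\{t \cdot \alpha : t \in G\}$ is equivalent, by Lemma \ref{lem:shan}(vi), to the assertion that $\sH(\alpha^T) = |T| \cdot \sH(\alpha)$ for every finite $T \subseteq G$. So it suffices to rule out the strict inequality $\sH(\alpha^T) < |T| \cdot \sH(\alpha)$ for any finite $T$.

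I would argue by contradiction. Suppose some finite $T \subseteq G$ satisfies $\sH(\alpha^T) < |T| \cdot \sH(\alpha)$. Then I can pick $\epsilon > 0$ small enough that
$$\frac{1}{|T|} \cdot \sH(\alpha^T) < \sH(\alpha) - \epsilon.$$
Now I apply Theorem \ref{thm:drop} (or, equivalently here, its corollary) with the trivial invariant $\sigma$-algebra $\cF = \{X, \varnothing\}$. Since $\rh_{G,X}(\alpha \given \cF)$ with $\cF$ trivial is just the outer Rokhlin entropy $\rh_{G,X}(\alpha)$, and since $\alpha$ is \emph{generating} so that $\salg_G(\alpha) = \Borel(X)$, any partition $\zeta$ with $\alpha \subseteq \salg_G(\zeta)$ is itself generating; hence $\rh_{G,X}(\alpha) = \rh_G(X, \mu)$. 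Theorem \ref{thm:drop} therefore yields
$$\rh_G(X, \mu) = \rh_{G,X}(\alpha) < \sH(\alpha) - \frac{\epsilon}{16 |T|^3} < \sH(\alpha).$$
This directly contradicts the hypothesis $\sH(\alpha) = \rh_G(X, \mu)$.

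Consequently no such finite $T$ exists, so $\sH(\alpha^T) = |T| \cdot \sH(\alpha)$ for every finite $T$, and the $G$-translates of $\alpha$ are mutually independent. By the recalled fact about Bernoulli shifts (a free ergodic action is Bernoulli precisely when it admits a generating partition with mutually independent translates), $G \acts (X, \mu)$ is isomorphic to a Bernoulli shift. The one point requiring care is the identification $\rh_{G,X}(\alpha) = \rh_G(X,\mu)$ under the generating hypothesis; everything else is a direct invocation of Theorem \ref{thm:drop} with trivial $\cF$. I expect this identification to be the only genuine step, since it is exactly where the \emph{generating} assumption on $\alpha$ is used to convert the outer-entropy bound into a statement about the global Rokhlin entropy.
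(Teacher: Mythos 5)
Your proposal is correct and is essentially the paper's own argument: the paper likewise applies Theorem \ref{thm:drop} (with trivial $\cF$) to conclude $\sH(\alpha^T) = |T| \cdot \sH(\alpha)$ for all finite $T$, then invokes Lemma \ref{lem:shan}(vi) and the characterization of Bernoulli shifts via generating partitions with independent translates. Your explicit verification that $\rh_{G,X}(\alpha) = \rh_G(X,\mu)$ for generating $\alpha$ is exactly the (implicit) step the paper relies on, and it is valid.
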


\begin{proof}
Since $\rh_G(X, \mu) = \sH(\alpha)$, Theorem \ref{thm:drop} implies that $\sH(\alpha^T) = |T| \cdot \sH(\alpha)$ for every finite $T \subseteq G$. Since $\sH(\alpha) < \infty$, this implies that the $G$-translates of $\alpha$ are mutually independent. As $\alpha$ is a generating partition, it follows that $G \acts (X, \mu)$ is isomorphic to a Bernoulli shift.
\end{proof}

With the above corollary we answer a question of N.-P. Chung in \cite[Question 5.4]{Ch13} regarding equilibrium states for sofic pressure. We refer the reader to \cite{Ch13} for the relevant definitions.

\begin{cor} \label{cor:pressure}
Let $G$ be a sofic group, let $L$ be a finite set, and let $f_0 : L \rightarrow \R$ be a function. Consider the Bernoulli shift $L^G$ and define $f : L^G \rightarrow \R$ by $f(x) = f_0(x(1_G))$. Define a probability measure $\lambda$ on $L$ by
$$\lambda(\ell) = \frac{\exp(f_0(\ell))}{\sum_{\ell' \in L} \exp(f_0(\ell'))}.$$
Then $\lambda^G$ is the unique equilibrium state for $f$ for every sofic approximation $\Sigma$ to $G$.
\end{cor}

\begin{proof}
Chung proved that $\lambda^G$ is an equilibrium state, and he proved that it is the unique equilibrium state among Bernoulli measures. So we only need to show that every equilibrium state is a Bernoulli measure. Denote by $P_\Sigma(f, L^G, G)$ the sofic pressure of $f$ with respect to a sofic approximation $\Sigma$ to $G$. Let $\mu$ be a $G$-invariant probability measure on $L^G$ which is an equilibrium state for $f$ and $\Sigma$, meaning
\begin{equation} \label{eqn:eqlb}
P_\Sigma(f, L^G, G) = h^\Sigma_G(L^G, \mu) + \int f \ d \mu,
\end{equation}
where $h^\Sigma_G(L^G, \mu)$ denotes the sofic entropy with respect to $\Sigma$. Define a measure $\nu$ on $L$ by $\nu(\ell) = \mu(\{ x \in L^G : x(1_G) = \ell\})$. Let $\cL = \{R_\ell : \ell \in L\}$ be the canonical generating partition of $L^G$, where $R_\ell = \{x \in L^G : x(1_G) = \ell\}$. Then $\int f \ d \nu^G = \int f \ d \mu$ and
\begin{equation} \label{eqn:pres}
h_G^\Sigma(L^G, \nu^G) = \sH(L, \nu) = \sH_\nu(\cL) = \sH_\mu(\cL) \geq \rh_G(L^G, \mu) \geq h_G^\Sigma(L^G, \mu).
\end{equation}
However, by the variational principle \cite{Ch13} we have
$$P_\Sigma(f, L^G, G) \geq h_G^\Sigma(L^G, \nu^G) + \int f \ d \nu^G.$$
Combining this with (\ref{eqn:eqlb}) and (\ref{eqn:pres}), we conclude that $\rh_G(L^G, \mu) = \sH_\mu(\cL)$. By Corollary \ref{cor:bernoulli}, $\cL$ is a Bernoulli generating partition for $(L^G, \mu)$ and thus $\mu = \nu^G$.
\end{proof}

\section{Gottschalk's surjunctivity conjecture and CPE} \label{sec:gott}

In this section we relate the Rokhlin entropy values of Bernoulli shifts with Gottschalk's surjunctivity conjecture and the property of completely positive entropy.

\begin{cor} \label{cor:gott}
Let $G$ be a countably infinite group. Assume that $\rh_G(k^G, u_k^G) = \log(k)$ for every $k \in \N$. Then $G$ satisfies Gottschalk's surjunctivity conjecture and Kaplansky's direct finiteness conjecture.
\end{cor}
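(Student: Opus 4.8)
The plan is to first reduce to Gottschalk's conjecture: since it is known \cite[Section I.5]{CL13} that Gottschalk's surjunctivity conjecture implies Kaplansky's direct finiteness conjecture, it suffices to verify the former. So fix $n \in \N$ (we may assume $n \geq 2$, since $n^G$ is a single point when $n \leq 1$) and suppose toward a contradiction that $\phi : n^G \to n^G$ is a continuous $G$-equivariant injection which is \emph{not} surjective. The key idea is to transport the problem into the measure-theoretic setting by pushing forward the uniform Bernoulli measure and then invoking Corollary \ref{cor:bernoulli}.

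Since $n^G$ is compact and $\phi$ is a continuous injection, $\phi$ is a homeomorphism onto its image $Y := \phi(n^G)$, which is therefore a closed, $G$-invariant, \emph{proper} subset of $n^G$. I would then set $\nu = \phi_* u_n^G$. By construction $\nu$ is concentrated on $Y$, and $\phi$ becomes an isomorphism of {\pmp} $G$-systems from $(n^G, u_n^G)$ onto $(n^G, \nu)$. As $u_n^G$ is free and ergodic (here $n \geq 2$ and $G$ is infinite) and these properties are isomorphism invariants, $G \acts (n^G, \nu)$ is again free and ergodic; and by isomorphism invariance of Rokhlin entropy together with the hypothesis, $\rh_G(n^G, \nu) = \rh_G(n^G, u_n^G) = \log n$.

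Next I would exploit the canonical partition $\cL = \{R_i : 0 \le i < n\}$. Under any invariant measure the translate $g \cdot \cL$ recovers the coordinate $x \mapsto x(g)$, so $\salg_G(\cL) = \Borel(n^G)$ and $\cL$ is generating for $(n^G, \nu)$, with $\sH_\nu(\cL) \le \log n$. Combining this with the previous paragraph gives $\log n = \rh_G(n^G, \nu) \le \sH_\nu(\cL) \le \log n$, so $\sH_\nu(\cL) = \log n = \rh_G(n^G, \nu)$; in particular equality in the Shannon bound forces $\nu(R_i) = 1/n$ for each $i$. Now Corollary \ref{cor:bernoulli} applies: because $\cL$ is a generating partition realizing the Rokhlin entropy, its $G$-translates are mutually $\nu$-independent. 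Together with $\nu(R_i) = 1/n$ this means $\nu$ assigns measure $n^{-|F|}$ to every cylinder over a finite window $F$, exactly as $u_n^G$ does; since cylinders generate $\Borel(n^G)$, we conclude $\nu = u_n^G$.

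This yields the contradiction: $u_n^G$ has full support, so the nonempty open set $n^G \setminus Y$ satisfies $u_n^G(n^G \setminus Y) > 0$, whereas $\nu(n^G \setminus Y) = 0$ because $\nu$ is concentrated on $Y$. I expect the only genuinely delicate point to be this final translation — converting the topological failure of surjectivity into a positive-measure statement and then contradicting it through the independence conclusion of Corollary \ref{cor:bernoulli}; everything else is a direct application of the isomorphism invariance of Rokhlin entropy and of the fact that a Bernoulli realization of the entropy pins the measure down to $u_n^G$.
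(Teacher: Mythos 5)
Your argument is correct and follows essentially the same route as the paper: push forward $u_n^G$ under $\phi$, use the hypothesis together with Theorem \ref{thm:drop} (via Corollary \ref{cor:bernoulli}) to force $\sH_\nu(\cL^T) = |T|\cdot\sH_\nu(\cL)$, and contradict non-surjectivity. The only difference is the endgame: the paper notes that a missed cylinder gives $|\cL^T \res Y| \leq n^{|T|}-1$ and hence a strict entropy drop, whereas you push the independence all the way to $\nu = u_n^G$ and contradict full support --- both finishes are valid and of comparable length.
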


\begin{proof}
We verify Gottschalk's surjunctivity conjecture as Kaplansky's direct finiteness conjecture will then hold automatically \cite[Section I.5]{CL13}. Let $k \geq 2$ and let $\phi : k^G \rightarrow k^G$ be a continuous $G$-equivariant injection. Set $(Y, \nu) = (\phi(k^G), \phi_*(u_k^G))$ where $\nu = \phi_*(u_k^G)$ is the push-forward measure. Let $\cL = \{R_i : 0 \leq i < k\}$ denote the canonical generating partition for $k^G$, where
$$R_i = \{x \in k^G : x(1_G) = i\}.$$
Note that $\cL \res Y$ is generating for $Y$. Since $\phi$ is injective, it is an isomorphism between $(k^G, u_k^G)$ and $(Y, \nu)$. Therefore
$$\log(k) = \rh_G(k^G, u_k^G) = \rh_G(Y, \nu) \leq \sH_\nu(\cL) \leq \log | \cL | = \log(k).$$
So $\rh_G(Y, \nu) = \sH_\nu(\cL) = \log(k)$. In particular, $\sH_\nu(\cL^T) = |T| \cdot \sH_\nu(\cL)$ for all finite $T \subseteq G$ by Theorem \ref{thm:drop}.

Towards a contradiction, suppose that $\phi$ is not surjective. Then its image is a proper closed subset of $k^G$ and hence there is some finite $T \subseteq G$ and $w \in k^{T^{-1}}$ such that $y \res T^{-1} \neq w$ for all $y \in Y$. This implies that $|\cL^T \res Y| \leq k^{|T|} - 1$. So
$$\sH_\nu(\cL^T) \leq \log | \cL^T \res Y | \leq \log(k^{|T|} - 1) < |T| \cdot \log(k) = |T| \cdot \sH_\nu(\cL),$$
a contradiction.
\end{proof}

Next we consider the property of completely positive outer Rokhlin entropy. We say that an ergodic action $G \acts (X, \mu)$ has \emph{completely positive outer Rokhlin entropy} if every factor $G \acts (Y, \nu)$ which is non-trivial (i.e. $Y$ is not a single point) satisfies $\rh_{G, \mu}(Y, \nu) > 0$.

\begin{cor} \label{cor:cpe}
Let $G$ be a countably infinite group. Assume that $\rh_G(L^G, \lambda^G) = \sH(L, \lambda)$ for every probability space $(L, \lambda)$. Then every Bernoulli shift over $G$ has completely positive outer Rokhlin entropy.
\end{cor}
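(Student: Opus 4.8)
The plan is to bound $\rh_{G,L^G}(\Sigma)$ from below for every nontrivial $G$-invariant sub-$\sigma$-algebra $\Sigma \subseteq \Borel(L^G)$, writing $(X,\mu) = (L^G,\lambda^G)$ and recalling that this action is free and ergodic. The engine is the subadditivity estimate of Corollary \ref{cor:add2}: with $\cF$ trivial and $\mathcal{C} = \Sigma$ (so $\salg_G(\mathcal{C}) = \Sigma$) it gives
\[
\rh_G(L^G,\lambda^G) \ \le\ \rh_{G,L^G}(\Sigma) + \rh_G(L^G,\lambda^G \given \Sigma),
\]
and hence, using the hypothesis $\rh_G(L^G,\lambda^G) = \sH(L,\lambda)$,
\[
\rh_{G,L^G}(\Sigma) \ \ge\ \sH(L,\lambda) - \rh_G(L^G,\lambda^G \given \Sigma).
\]
So it suffices to prove the strict drop $\rh_G(L^G,\lambda^G\given\Sigma) < \sH(L,\lambda)$ whenever $\Sigma$ is nontrivial; that is, conditioning on a nontrivial factor strictly lowers the relative Rokhlin entropy of a Bernoulli shift.

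To produce this drop I would use the canonical generating partition $\cL$, for which $\sH(\cL) = \sH(L,\lambda)$ and whose $G$-translates are mutually independent. Since $\cL$ generates, $\salg_G(\cL)\vee\Sigma = \Borel(L^G)$, whence $\rh_{G,L^G}(\cL\given\Sigma) = \rh_G(L^G,\lambda^G\given\Sigma)$ and always $\rh_{G,L^G}(\cL\given\Sigma) \le \sH(\cL\given\Sigma)$. If $\cL$ is already not independent of $\Sigma$, then $\sH(\cL\given\Sigma) < \sH(\cL) = \sH(L,\lambda)$ and we are done at once. Otherwise $\sH(\cL\given\Sigma) = \sH(\cL)$; but since $\Sigma$ is nontrivial and the partitions $\cL^T$ generate $\Borel(L^G)\supseteq\Sigma$, some finite $T$ (which, after translating, I may take to contain $1_G$) has $\cL^T$ not independent of $\Sigma$, i.e. $\sH(\cL^T\given\Sigma) < \sH(\cL^T) = |T|\,\sH(\cL) = |T|\,\sH(\cL\given\Sigma)$. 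Choosing $\epsilon>0$ below the gap $\sH(\cL\given\Sigma) - \tfrac{1}{|T|}\sH(\cL^T\given\Sigma)$ and applying Theorem \ref{thm:drop} with $\alpha = \cL$ and $\cF = \Sigma$ yields $\rh_{G,L^G}(\cL\given\Sigma) < \sH(\cL\given\Sigma) - \epsilon/(16|T|^3) < \sH(L,\lambda)$, which is exactly the required strict drop. Combined with the displayed inequality this gives $\rh_{G,L^G}(\Sigma) > 0$.

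This completes the case $\sH(L,\lambda) < \infty$, where $\cL$ is countable and Theorem \ref{thm:drop} applies directly. The main obstacle is the case $\sH(L,\lambda) = \infty$: there the subtraction $\sH(L,\lambda) - \rh_G(L^G,\lambda^G\given\Sigma)$ degenerates to $\infty - \infty$, since conditioning on $\Sigma$ can lower an infinite total entropy by at most a finite amount. To handle it I would localize the entropy drop by reducing to finite-entropy Bernoulli factors. Fix a finite coarsening $\cL_0 \le \cL$ and a finite $T$ with $\cL_0^T$ not independent of $\Sigma$ (such exist, as the $\cL_0^T$ generate $\Borel(L^G)\supseteq\Sigma$), and pass to finite-entropy sub-Bernoulli shifts $B_1 = \salg_G(\cL_1)$ with $\cL_0 \le \cL_1 \le \cL$, where the hypothesis supplies the exact values $\rh_G(B_1,\beta_1) = \sH(\cL_1)$ and the drop is a genuine positive quantity. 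The crux is transferring the resulting lower bound from the finite sub-factor back to the ambient infinite-entropy shift: one must rule out that the infinitely many additional independent coordinates of $L^G$ allow the finite Bernoulli factor $B_0 = \salg_G(\cL_0)$ to be captured by a partition of strictly smaller Shannon entropy than its base entropy.

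Concretely, I expect the decisive lemma to be that a finite Bernoulli factor retains full outer Rokhlin entropy inside the larger shift, namely $\rh_{G,L^G}(\cL_0) = \sH(\cL_0)$. Granting this, the finite-case mechanism runs in contrapositive form: assuming $\rh_{G,L^G}(\Sigma) = 0$, Corollary \ref{cor:add2} forces $\rh_{G,L^G}(\cL_0\given\Sigma) = \rh_{G,L^G}(\cL_0) = \sH(\cL_0)$, so there is no drop and $\cL_0 \perp \Sigma$; the contrapositive of Theorem \ref{thm:drop} then yields $\sH(\cL_0^T\given\Sigma) = |T|\,\sH(\cL_0) = \sH(\cL_0^T)$, i.e. $\cL_0^T \perp \Sigma$ for every finite $T$. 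Letting $\cL_0 \nearrow \cL$ shows $\Borel(L^G)$ is independent of $\Sigma$, forcing $\Sigma$ to be trivial, a contradiction. Establishing the lower bound $\rh_{G,L^G}(\cL_0) = \sH(\cL_0)$ in the infinite-entropy ambient space — where the clean finite-total accounting of Corollary \ref{cor:add2} is unavailable — is the step I anticipate will require the most care, presumably via a limiting argument over the finite Bernoulli factors $B_1$.
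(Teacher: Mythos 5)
Your argument for the case $\sH(L,\lambda)<\infty$ is correct and takes a genuinely different route from the paper's. You condition directly on the factor $\sigma$-algebra $\Sigma$, extract the strict drop $\rh_G(L^G,\lambda^G\given\Sigma)<\sH(L,\lambda)$ from Theorem \ref{thm:drop} (using that some $\cL^T$ must fail to be independent of a nontrivial $\Sigma$), and read off positivity of $\rh_{G,L^G}(\Sigma)$ from the subadditivity in Corollary \ref{cor:add2}. The paper never conditions on $\Sigma$ itself: it replaces $\Sigma$ by a partition $\cQ$ with $\sH(\cQ)$ close to $\rh_{G,L^G}(Y)$, approximates $\cQ$ and a fixed nontrivial $\cP\subseteq\cF$ by partitions measurable over a finite-entropy Bernoulli factor $L_k^G$, applies Corollary \ref{cor:drop2} inside $L_k^G$, and chains Corollary \ref{cor:add2} so that $\rh_{G,L^G}(Y)$ appears explicitly with the quantitative lower bound $\epsilon/(64|T|^4)$. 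Your version is shorter and more transparent when $\sH(L,\lambda)<\infty$; the paper's machinery buys uniformity over the finite and infinite entropy cases.

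The gap is in the case $\sH(L,\lambda)=\infty$, where everything rests on the asserted but unproven lemma that $\rh_{G,L^G}(\cL_0)=\sH(\cL_0)$ for every finite coarsening $\cL_0$ of $\cL$. This is not a routine consequence of the hypothesis: the hypothesis plus Corollary \ref{cor:add2} gives $\rh_{G,L_1^G}(\cL_0)=\sH(\cL_0)$ when the infimum ranges over partitions measurable with respect to a finite-entropy Bernoulli factor $\salg_G(\cL_1)$, but the outer Rokhlin entropy in $L^G$ infimizes over all partitions of $L^G$, which need not be measurable over any such factor, and the $\infty-\infty$ obstruction you identified resurfaces here. The lemma is true and provable with the paper's tools: given a countable $\alpha$ with $\salg_G(\cL_0)\subseteq\salg_G(\alpha)$ and $\sH(\alpha)<\sH(\cL_0)-c$, fix a finite $U$ with $\sH(\cL_0\given\alpha^U)<c/8$, use Corollary \ref{cor:invgen} to find $k$, a finite $T$, and $\alpha'\leq\cL_k^T$ with $\dR_\mu(\alpha',\alpha)$ so small that $\sH(\alpha')<\sH(\alpha)+c/8$ and $\sH(\cL_0\given\alpha'^U)<c/4$, and then run the two-step Corollary \ref{cor:add2} chain inside $(L_k^G,\lambda_k^G)$ against $\rh_G(L_k^G,\lambda_k^G)=\sH(\cL_k)$ to reach $0\leq -5c/8$. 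This approximation step is precisely where the substance of the paper's proof lives, and until you carry it out your infinite-entropy case is incomplete. (The contrapositive mechanism you build on top of the lemma --- $\rh_{G,L^G}(\Sigma)=0$ forces $\cL_0^T$ independent of $\Sigma$ for all finite $\cL_0\leq\cL$ and finite $T$, hence $\Sigma$ trivial --- is correct.)
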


\begin{proof}
Let $(L, \lambda)$ be a probability space, and let $G \acts (Y, \nu)$ be a non-trivial factor of $(L^G, \lambda^G)$. Let $\cF$ be the $G$-invariant sub-$\sigma$-algebra of $L^G$ associated to $(Y, \nu)$.

First we mention a short proof in the case that $\sH(L, \lambda) < \infty$. Let $\cL$ be the canonical partition of $L^G$. If $T \subseteq G$ is finite and $\sH(\cL^T \given \cF) = \sH(\cL^T) = |T| \cdot \sH(\cL)$, then $\cL^T$ must be independent of $\cF$ by Lemma \ref{lem:shan}. Since $\cL$ is a generating partition, this cannot occur for every finite $T \subseteq G$. So by Theorem \ref{thm:drop} we get $\rh_G(L^G, \lambda^G \given \cF) < \sH(\cL) = \rh_G(L^G, \lambda^G)$. Therefore by sub-additivity
$$\rh_{G, \lambda^G}(Y, \nu) \geq \rh_G(L^G, \lambda^G) - \rh_G(L^G, \lambda^G \given \cF) > 0.$$
Here we only needed to assume $\rh_G(L^G, \lambda^G) = \sH(L, \lambda) < \infty$ for this fixed choice of $(L, \lambda)$. In the general case below, we must assume that $\rh_G(L^G, \lambda^G) = \sH(L, \lambda)$ for all probability spaces $(L, \lambda)$.

Fix an increasing sequence of finite partitions $\cL_k$ of $L$ with $\bigvee_{k \in \N} \salg(\cL_k) = \Borel(L)$, and let $(L_k, \lambda_k)$ denote the factor of $(L, \lambda)$ associated to $\cL_k$. Let $\cL = \{R_\ell : \ell \in L\}$ be the canonical partition of $L^G$, where $R_\ell = \{x \in L^G : x(1_G) = \ell\}$. We identify each of the partitions $\cL_k$ as coarsenings of $\cL \subseteq \Borel(L^G)$. Note that $(L_k^G, \lambda_k^G)$ is the factor of $(L^G, \lambda^G)$ associated to $\salg_G(\cL_k)$. When working with $L_k^G$, for $m \leq k$ we view $\cL_m$ as a partition of $L_k^G$ in the natural way. Note that by our assumption and by sub-additivity
\begin{align*}
\sH(L_k, \lambda_k) = \rh_G(L_k^G, \lambda_k^G) & \leq \rh_{G, \lambda_k^G}(\cL_m) + \rh_G(L_k^G, \lambda_k^G \given \salg_G(\cL_m))\\
 & \leq \sH(\cL_m) + \sH(\cL_k \given \cL_m) = \sH(\cL_k) = \sH(L_k, \lambda_k).
\end{align*}
So equality holds throughout and
\begin{equation} \label{eqn:cpe1}
\rh_G(L_k^G, \lambda_k^G) = \sH(\cL_m) + \rh_G(L_k^G, \lambda_k^G \given \salg_G(\cL_m)).
\end{equation}

Fix a non-trivial finite partition $\cP \subseteq \cF$ and fix $0 < \epsilon < \sH(\cP) / 9$. By Corollary \ref{cor:invgen} there is $m \in \N$, finite $T \subseteq G$, and $\beta \leq \cL_m^T$ with $\dR_{\lambda^G}(\beta, \cP) < \epsilon$. Note that $\rh_{G, \lambda^G}(\cP) \leq \sH(\cP) < \infty$. Fix a partition $\cQ$ with
$$\sH(\cQ) < \rh_{G, \lambda^G}(\cP) + \frac{\epsilon}{32 |T|^3} \leq \rh_{G, \lambda^G}(Y, \nu) + \frac{\epsilon}{32 |T|^3}$$
and with $\cP \subseteq \salg_G(\cQ)$. By Corollary \ref{cor:gen} there is a finite $W \subseteq G$ and $\cP' \leq \cQ^W$ with $\dR_{\lambda^G}(\cP', \cP) < \epsilon$. Since $\sH(\cQ) < \infty$, we can apply Corollary \ref{cor:invgen} and Lemma \ref{lem:coarse} to get $k \geq m$, $\gamma \leq \salg_G(\cL_k)$ with $\dR_{\lambda^G}(\gamma, \cQ) < \epsilon / (32 |T|^3)$, and $\beta' \leq \gamma^W$ with $\dR_{\lambda^G}(\beta', \cP') < \epsilon$. Note that
\begin{equation} \label{eqn:cpe2}
\rh_{G, \lambda_k^G}(\gamma) \leq \sH(\gamma) < \sH(\cQ) + \dR_{\lambda^G}(\gamma, \cQ) \leq \rh_{G, \lambda^G}(Y, \nu) + \frac{2 \epsilon}{32 |T|^3}.
\end{equation}
Also note that $\sH(\cL_m^T \given \beta) = \sH(\cL_m^T) - \sH(\beta)$ since $\beta \leq \cL_m^T$.

We have
\begin{align*}
\frac{1}{|T|} \cdot \sH(\cL_m^T \given \beta') & < \frac{1}{|T|} \cdot \sH(\cL_m^T \given \beta) + \frac{1}{|T|} \cdot 2 \dR_{\lambda^G}(\beta', \beta)\\
 & < \sH(\cL_m) - \frac{1}{|T|} \cdot \sH(\beta) + \frac{6 \epsilon}{|T|}.
\end{align*}
So by Corollary \ref{cor:drop2}
\begin{equation} \label{eqn:cpe3}
\rh_{G, \lambda_k^G}(\cL_m \given \salg_G(\gamma)) < \sH(\cL_m) - \frac{1}{32 |T|^3} \cdot \sH(\beta) + \frac{6 \epsilon}{32 |T|^3}.
\end{equation}
By sub-additivity we have
$$\rh_G(L_k^G, \lambda_k^G) \leq \rh_{G, \lambda_k^G}(\gamma) + \rh_{G, \lambda_k^G}(\cL_m \given \salg_G(\gamma)) + \rh_G(L_k^G, \lambda_k^G \given \salg_G(\cL_m)).$$
Combining this inequality with (\ref{eqn:cpe1}) and then (\ref{eqn:cpe3}) gives
$$\rh_{G, \lambda_k^G}(\gamma) \geq \sH(\cL_m) - \rh_{G, \lambda_k^G}(\cL_m \given \salg_G(\gamma)) \geq \frac{1}{32 |T|^3} \cdot \sH(\beta) - \frac{6 \epsilon}{32 |T|^3}.$$
Finally, using (\ref{eqn:cpe2}) we conclude
\begin{equation*}
\rh_{G, \lambda^G}(Y, \nu) > \frac{1}{32 |T|^3} \cdot \sH(\beta) - \frac{8 \epsilon}{32 |T|^3} > \frac{1}{32 |T|^3} \cdot \sH(\cP) - \frac{9 \epsilon}{32 |T|^3} > 0.\qedhere
\end{equation*}
\end{proof}

\section{Approximately Bernoulli partitions} \label{sec:approx}

For a {\pmp} action $G \acts (X, \mu)$ we let $E_G^X$ denote the induced orbit equivalence relation:
$$E_G^X = \{(x, y) \: \exists g \in G, \ \ g \cdot x = y\}.$$
The \emph{pseudo-group} of $E_G^X$, denoted $[[E_G^X]]$, is the set of all Borel bijections $\theta : \dom(\theta) \rightarrow \rng(\theta)$ where $\dom(\theta), \rng(\theta) \subseteq X$ are Borel and $\theta(x) \in G \cdot x$ for every $x \in \dom(\theta)$. Note that since $G$ acts measure preservingly and $\theta(x) \in G \cdot x$ for all $x \in \dom(\theta)$, $\theta$ is measure-preserving as well.

\begin{defn}
Let $G \acts (X, \mu)$ be a {\pmp} action, let $\theta \in [[E_G^X]]$, and let $\cF$ be a $G$-invariant sub-$\sigma$-algebra. We say that $\theta$ is \emph{$\cF$-expressible} if $\dom(\theta), \rng(\theta) \in \cF$ and there is a $\cF$-measurable partition $\{Z_g^\theta \: g \in G\}$ of $\dom(\theta)$ such that $\theta(x) = g \cdot x$ for every $x \in Z_g^\theta$ and all $g \in G$.
\end{defn}

We will need the following two simple lemmas from Part I \cite{S14}.

\begin{lem}[{\cite[Lem 3.2]{S14}}] \label{lem:expmove}
Let $G \acts (X, \mu)$ be a {\pmp} action and let $\cF$ be a $G$-invariant sub-$\sigma$-algebra. If $\theta \in [[E_G^X]]$ is $\cF$-expressible and $A \subseteq X$, then $\theta(A) = \theta(A \cap \dom(\theta))$ is $\salg_G(\{A\}) \vee \cF$-measurable. In particular, if $A \in \cF$ then $\theta(A) \in \cF$.
\end{lem}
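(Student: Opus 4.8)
The plan is to exploit the defining structure of an $\cF$-expressible pseudo-group element, which decomposes $\theta$ into countably many group translations each governed by an $\cF$-measurable ``level set,'' and then to verify that every resulting piece lands inside $\salg_G(\{A\}) \vee \cF$. First I would record the trivial reduction: since $\theta$ is defined only on $\dom(\theta)$, we automatically have $\theta(A) = \theta(A \cap \dom(\theta))$, which settles the first asserted equality. Next, using the $\cF$-measurable partition $\{Z_g^\theta \: g \in G\}$ of $\dom(\theta)$ witnessing $\cF$-expressibility, I would split $A \cap \dom(\theta) = \bigcup_{g \in G}(A \cap Z_g^\theta)$ and apply $\theta$ piecewise. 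Since $\theta(x) = g \cdot x$ for $x \in Z_g^\theta$, this yields the countable decomposition
\begin{equation*}
\theta(A) = \bigcup_{g \in G} g \cdot \bigl(A \cap Z_g^\theta\bigr) = \bigcup_{g \in G} \bigl( (g \cdot A) \cap (g \cdot Z_g^\theta) \bigr).
\end{equation*}

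The crux is then two invariance observations applied termwise. Because $\salg_G(\{A\})$ is by definition $G$-invariant and contains $A$, each translate $g \cdot A$ belongs to $\salg_G(\{A\})$; and because $\cF$ is $G$-invariant with $Z_g^\theta \in \cF$, each translate $g \cdot Z_g^\theta$ belongs to $\cF$. Hence each intersection $(g \cdot A) \cap (g \cdot Z_g^\theta)$ lies in $\salg_G(\{A\}) \vee \cF$, and since $G$ is countable the displayed union over $g \in G$ does as well, proving the first claim. For the ``in particular'' statement I would note that if $A \in \cF$, then by minimality of $\salg_G(\{A\})$ together with the $G$-invariance of $\cF$ we have $\salg_G(\{A\}) \subseteq \cF$, so $\salg_G(\{A\}) \vee \cF = \cF$ and therefore $\theta(A) \in \cF$.

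I do not anticipate a genuine obstacle: the lemma is essentially a bookkeeping consequence of $\cF$-expressibility combined with the $G$-invariance of the two $\sigma$-algebras in play. The only point requiring mild care is the passage to a \emph{countable} union, which is exactly where the countability of $G$ enters, guaranteeing both that the partition $\{Z_g^\theta\}$ is countable and that the resulting decomposition of $\theta(A)$ stays measurable.
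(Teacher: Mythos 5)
Your proof is correct and there is nothing to compare it against here: the paper states this lemma without proof, citing Part I, and your argument --- writing $\theta(A) = \bigcup_{g \in G} \bigl( (g \cdot A) \cap (g \cdot Z_g^\theta) \bigr)$ and invoking the $G$-invariance of $\salg_G(\{A\})$ and of $\cF$ together with the countability of $G$ --- is exactly the intended bookkeeping argument. No gaps.
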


\begin{lem}[{\cite[Lem 3.3]{S14}}] \label{lem:expgroup}
Let $G \acts (X, \mu)$ be a {\pmp} action and let $\cF$ be a $G$-invariant sub-$\sigma$-algebra. If $\theta, \phi \in [[E_G^X]]$ are $\cF$-expressible then so are $\theta^{-1}$ and $\theta \circ \phi$.
\end{lem}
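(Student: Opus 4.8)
The plan is to handle the two claims in the order stated, treating $\theta^{-1}$ first because its $\cF$-expressibility is precisely the tool that makes the composition argument go through. Throughout I would lean on Lemma \ref{lem:expmove}, which guarantees that an $\cF$-expressible map carries $\cF$-sets to $\cF$-sets.

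For $\theta^{-1}$, its domain and range are $\rng(\theta)$ and $\dom(\theta)$, both in $\cF$ by hypothesis. If $x \in Z_g^\theta$ then $\theta(x) = g \cdot x$, so $\theta^{-1}(y) = g^{-1} \cdot y$ for $y = \theta(x)$. Accordingly I would set $Z_h^{\theta^{-1}} = \theta(Z_{h^{-1}}^\theta)$ for each $h \in G$. Since $Z_{h^{-1}}^\theta \in \cF$ and $\theta$ is $\cF$-expressible, Lemma \ref{lem:expmove} gives $Z_h^{\theta^{-1}} \in \cF$. Because $\theta$ is a bijection and $\{Z_g^\theta\}$ partitions $\dom(\theta)$, the images $\{Z_h^{\theta^{-1}}\}$ partition $\rng(\theta) = \dom(\theta^{-1})$, and by construction $\theta^{-1}(y) = h \cdot y$ on $Z_h^{\theta^{-1}}$. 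This is exactly $\cF$-expressibility.

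For $\theta \circ \phi$, I would first verify membership of the domain and range in $\cF$. We have $\dom(\theta \circ \phi) = \phi^{-1}(\dom(\theta) \cap \rng(\phi))$; since $\dom(\theta), \rng(\phi) \in \cF$ and $\phi^{-1}$ is $\cF$-expressible by the first part, Lemma \ref{lem:expmove} places this set in $\cF$. Dually, $\rng(\theta \circ \phi) = \theta(\dom(\theta) \cap \rng(\phi)) \in \cF$. For the expressing partition, observe that if $x \in Z_h^\phi$ with $\phi(x) \in Z_g^\theta$, then $\theta(\phi(x)) = g \cdot (h \cdot x) = (gh) \cdot x$. I would therefore define, for each $k \in G$,
$$Z_k^{\theta \circ \phi} = \dom(\theta \circ \phi) \cap \bigcup_{gh = k} \Big( Z_h^\phi \cap \phi^{-1}(Z_g^\theta) \Big).$$
Each $\phi^{-1}(Z_g^\theta)$ lies in $\cF$ by Lemma \ref{lem:expmove} applied to $\phi^{-1}$, so each $Z_k^{\theta \circ \phi}$ is a countable union of $\cF$-sets, hence $\cF$-measurable. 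Since every $x \in \dom(\theta \circ \phi)$ determines a unique $h$ with $x \in Z_h^\phi$ and a unique $g$ with $\phi(x) \in Z_g^\theta$, these sets are pairwise disjoint, partition $\dom(\theta \circ \phi)$, and satisfy $(\theta \circ \phi)(x) = k \cdot x$ on $Z_k^{\theta \circ \phi}$, as required.

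The only genuine subtlety is the ordering: the composition argument must take $\phi$-preimages of the $\cF$-measurable pieces $Z_g^\theta$, and measurability of those preimages is exactly what $\cF$-expressibility of $\phi^{-1}$ delivers through Lemma \ref{lem:expmove}. Everything else is bookkeeping with the group operation, using that $h \mapsto h^{-1}$ is a bijection of $G$ and that, for fixed $k$, the pairs $(g,h)$ with $gh = k$ are indexed countably.
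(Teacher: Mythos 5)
Your argument is correct: both the relabelling $Z_h^{\theta^{-1}} = \theta(Z_{h^{-1}}^{\theta})$ and the decomposition $Z_k^{\theta\circ\phi} = \bigcup_{gh=k}\bigl(Z_h^{\phi}\cap\phi^{-1}(Z_g^{\theta})\bigr)$ do exactly what is needed, and your use of Lemma \ref{lem:expmove} (applied to $\theta$ and to $\phi^{-1}$) correctly supplies the $\cF$-measurability of the images and preimages involved. The paper itself gives no proof here --- it imports the lemma from Part I --- so there is nothing to compare against; your proof is the standard one, and the only stylistic alternative worth mentioning is that $\phi^{-1}(Z_g^{\theta})=\bigcup_{h}\bigl(Z_h^{\phi}\cap h^{-1}\cdot Z_g^{\theta}\bigr)$ can be seen to lie in $\cF$ directly from $G$-invariance of $\cF$, without first establishing expressibility of $\phi^{-1}$.
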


In this section we will show how to construct generating partitions which are approximately Bernoulli. The result of this section will be key in order to study the Rokhlin entropy values of Bernoulli shifts. We begin with a few lemmas.

\begin{lem} \label{lem:pack}
Let $G \acts (X, \mu)$ be a {\pmp} ergodic action, let $\cF$ be a $G$-invariant sub-$\sigma$-algebra, and let $B \in \cF$ with $\mu(B) > 0$. Then there is a finite collection $\Phi \subseteq [[E_G^X]]$ of $\cF$-expressible functions such that $\{\dom(\phi) : \phi \in \Phi\}$ partitions $X$ and $\rng(\phi) \subseteq B$ for every $\phi \in \Phi$.
\end{lem}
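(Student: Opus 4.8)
The plan is to isolate a single ``injection'' primitive and then cover $X$ by finitely many copies of it. Concretely, I would first prove the following sub-lemma: \emph{if $A, B \in \cF$ satisfy $0 < \mu(A) \leq \mu(B)$, then there is an $\cF$-expressible $\phi \in [[E_G^X]]$ with $\dom(\phi) = A$ and $\rng(\phi) \subseteq B$.} To prove it I would run a maximality argument inside the class $\cP$ of all $\cF$-expressible $\psi \in [[E_G^X]]$ with $\dom(\psi) \subseteq A$ and $\rng(\psi) \subseteq B$, partially ordered by extension. Every chain in $\cP$ has an upper bound given by its union: the union is still injective (any two points in its domain already lie in a common member of the chain), lies in $[[E_G^X]]$, and is $\cF$-expressible because its domain, range, and witnessing sets $Z_g$ are the corresponding unions, all members of $\cF$ (separability of the measure algebra lets me pass to a countable cofinal subchain so that these unions are well defined mod null). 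By Zorn's lemma there is a maximal $\psi \in \cP$.

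I would then show $\mu(\dom(\psi)) = \mu(A)$. Suppose not, and set $A_0 = A \setminus \dom(\psi)$ and $B_0 = B \setminus \rng(\psi)$. Since $\mu(\rng(\psi)) = \mu(\dom(\psi)) < \mu(A) \leq \mu(B)$, both $A_0, B_0 \in \cF$ have positive measure. By ergodicity $\bigcup_{g \in G} g \cdot A_0$ is conull, so there is $g \in G$ with $\mu(g \cdot A_0 \cap B_0) > 0$. Put $Z = A_0 \cap g^{-1} \cdot B_0 \in \cF$ and extend $\psi$ by $x \mapsto g \cdot x$ on $Z$. The resulting map is still injective, since the newly added range $g \cdot Z \subseteq B_0$ is disjoint from $\rng(\psi)$ while $Z$ is disjoint from $\dom(\psi)$; it remains $\cF$-expressible (adjoin $Z$ to the piece $Z_g$, cf.\ Lemma \ref{lem:expgroup}); and it strictly extends $\psi$, contradicting maximality. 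Hence $\dom(\psi) = A$ mod null, and $\phi := \psi$ is the desired map.

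With the sub-lemma available, I would finish as follows. Because $G \acts (X, \mu)$ is ergodic, the factor associated to $\cF$ is either non-atomic or essentially a finite space with uniform atom-measure; in either case $X$ admits a partition into finitely many $\cF$-measurable sets $D_1, \dots, D_n$, each of measure at most $\mu(B)$. Indeed, in the non-atomic case one simply chops $\cF$ into small pieces, and in the finite case every $\cF$-atom has measure at most $\mu(B)$, since $B \in \cF$ has positive measure and is a union of atoms. Discarding null pieces and applying the sub-lemma to each $D_i$ with the fixed set $B$ produces $\cF$-expressible $\phi_i \in [[E_G^X]]$ with $\dom(\phi_i) = D_i$ and $\rng(\phi_i) \subseteq B$. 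Then $\Phi = \{\phi_1, \dots, \phi_n\}$ has exactly the required properties.

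The main obstacle is the sub-lemma, and within it the only genuinely delicate point is verifying that $\cP$ is closed under the unions needed to apply Zorn, i.e.\ that the defining data of an $\cF$-expressible map (domain, range, and the selector sets $Z_g$) stay inside $\cF$ under countable unions; this is what keeps the whole construction $\cF$-expressible throughout. Once that closure is in place, ergodicity does all the remaining work, preventing the maximal partial injection from stalling before its domain exhausts $A$.
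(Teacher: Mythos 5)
Your proof is correct and follows the same strategy as the paper: partition $X$ into finitely many $\cF$-measurable pieces of measure at most $\mu(B)$ (using the ergodic dichotomy for the factor associated to $\cF$), then send each piece into $B$ by an $\cF$-expressible injection. The only difference is that the paper outsources your sub-lemma to \cite[Lemma 3.5]{S14}, whereas you prove it directly by the standard exhaustion/maximality argument, which is essentially the argument used there.
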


\begin{proof}
We claim that there is a finite partition $\gamma \subseteq \cF$ with $\mu(C) \leq \mu(B)$ for every $C \in \gamma$. If the factor $G \acts (Y, \nu)$ of $(X, \mu)$ associated to $\cF$ is purely atomic then we can simply let $\gamma$ be the pre-image of the partition of $Y$ into points. On the other hand, if $(Y, \nu)$ is non-atomic then we can find such a partition in $Y$ and let $\gamma$ be its pre-image. Now by \cite[Lemma 3.5]{S14}, for every $C \in \gamma$ there is an $\cF$-expressible $\phi_C \in [[E_G^X]]$ with $\dom(\phi_C) = C$ and $\rng(\phi_C) \subseteq B$. Then $\Phi = \{\phi_C : C \in \gamma\}$ has the desired properties.
\end{proof}

\begin{lem} \label{lem:subset}
Let $G \acts (X, \mu)$ be a {\pmp} ergodic action with $(X, \mu)$ non-atomic, let $\cF$ be a $G$-invariant sub-$\sigma$-algebra, and let $B \in \cF$. If $\xi$ is a countable partition of $X$ and $\pv = (p_i)$ is a probability vector with
$$\sH(\xi \given \cF) < \mu(B) \cdot \sH(\pv),$$
then there is a partition $\alpha = \{A_i : 0 \leq i < |\pv|\}$ of $B$ with $\mu(A_i) = p_i \cdot \mu(B)$ for every $0 \leq i < |\pv|$ and with $\xi \subseteq \salg_G(\alpha') \vee \cF$ for every partition $\alpha'$ of $X$ extending $\alpha$.
\end{lem}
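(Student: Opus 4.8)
The plan is to recognize that the hypothesis $\sH(\xi \given \cF) < \mu(B) \cdot \sH(\pv)$ is precisely the hypothesis of Theorem \ref{thm:relbasic} with $r = \mu(B)$, so that Lemma \ref{lem:subset} is really a refinement of that theorem in which the support of the resulting pre-partition is pinned to the prescribed set $B \in \cF$ rather than being an arbitrary Borel set of measure $\mu(B)$. Accordingly, I would first apply Theorem \ref{thm:relbasic} with $r = \mu(B)$ to obtain a Borel pre-partition $\alpha_0 = \{A_i^0 : 0 \leq i < |\pv|\}$ with $\mu(\cup \alpha_0) = \mu(B)$, $\mu(A_i^0) = \mu(B) \cdot p_i$, and $\salg_G(\xi) \vee \cF \subseteq \ralg_G(\alpha_0) \vee \cF$.

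The recovery property already delivers what the lemma asks for, provided the support is correct: for any partition $\alpha' \sqsupseteq \alpha_0$ we have $\ralg_G(\alpha_0) \subseteq \salg_G(\alpha')$ by definition of $\ralg_G$, whence $\xi \subseteq \salg_G(\xi) \vee \cF \subseteq \ralg_G(\alpha_0) \vee \cF \subseteq \salg_G(\alpha') \vee \cF$. So the remaining task, and the heart of the matter, is to relocate $\alpha_0$ so that its support becomes exactly $B$ while preserving both the marginals $\mu(A_i) = \mu(B) \cdot p_i$ and the recovery property.

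I would carry out this relocation inside the pseudo-group $[[E_G^X]]$ using the $\cF$-expressible maps supplied by Lemma \ref{lem:pack}. Such maps are measure preserving, so the marginals are retained automatically, and the recovery property is stable under $\cF$-expressible recoding: if $\theta$ is $\cF$-expressible with $\cup \alpha_0 \subseteq \dom(\theta)$ and $\alpha = \theta(\alpha_0)$, then given any extension $\alpha' \sqsupseteq \alpha$ one constructs an extension $\alpha_0' \sqsupseteq \alpha_0$ by setting $\alpha_0' = \alpha_0$ on $\cup \alpha_0$, pulling $\alpha'$ back through $\theta^{-1}$ on $\dom(\theta)$, and keeping $\alpha'$ on $X \setminus \dom(\theta)$. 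Since $\theta^{-1}$ is again $\cF$-expressible by Lemma \ref{lem:expgroup} and $\dom(\theta), \rng(\theta) \in \cF$, Lemma \ref{lem:expmove} shows that every class of $\alpha_0'$ lies in $\salg_G(\alpha') \vee \cF$; hence $\salg_G(\alpha_0') \vee \cF \subseteq \salg_G(\alpha') \vee \cF$ and therefore $\xi \subseteq \salg_G(\alpha') \vee \cF$ as required.

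The main obstacle is that the support $\cup \alpha_0$ is not $\cF$-measurable, so no single $\cF$-expressible bijection can carry it onto $B$: the domain of such a map necessarily lies in $\cF$, whereas $\cup \alpha_0$ does not, and by measure preservation its domain would be forced to coincide with $\cup \alpha_0$. A naive patching from several $\cF$-expressible maps with overlapping ranges, on the other hand, destroys the prescribed $|\pv|$-piece structure. I therefore expect the delicate step to be organizing the relocation so that the images tile $B$ exactly, one piece per index $i$ and with the correct measures. Concretely, I anticipate this reduces to observing that the Krieger-type construction underlying Theorem \ref{thm:relbasic}—which is performed entirely within the pseudo-group and is $\cF$-expressible—can be localized to the $\cF$-set $B$ from the outset, so that the support equals $B$ by construction. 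Once that localization is in place, the marginal and recovery verifications sketched above complete the proof.
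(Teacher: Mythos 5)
There is a genuine gap, and you have in fact located it yourself: after obtaining the pre-partition $\alpha_0$ from Theorem \ref{thm:relbasic} with $r = \mu(B)$, you correctly observe that its support $\cup \alpha_0$ need not lie in $\cF$, so no $\cF$-expressible element of the pseudo-group can carry it onto $B$, and you correctly note that patching several such maps destroys the $|\pv|$-piece structure. But your proposed resolution --- that ``the Krieger-type construction underlying Theorem \ref{thm:relbasic} \ldots can be localized to the $\cF$-set $B$ from the outset'' --- is not an argument; it amounts to asserting the lemma, and making it precise would require reopening the proof of the main theorem of Part I rather than using it as a black box. In particular you never explain what ``localizing'' means for the translates $g \cdot A_i$ with $g \in G$ (which leave $B$), nor where the factor $\mu(B)$ in the hypothesis $\sH(\xi \given \cF) < \mu(B) \cdot \sH(\pv)$ gets consumed, nor how information about $\xi$ on $X \setminus B$ is to be recovered from a partition supported on $B$. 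Your first step (the application of Theorem \ref{thm:relbasic} with $r = \mu(B)$) ends up playing no role, since you concede it cannot be relocated.

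The paper's proof supplies exactly the two missing ideas. First, it \emph{transports $\xi$ into $B$}: using the finite family $\Phi$ of $\cF$-expressible maps from Lemma \ref{lem:pack} (whose domains partition $X$ and whose ranges lie in $B$), it forms $\xi_\phi = \{X \setminus \rng(\phi)\} \cup \phi(\xi \res \dom(\phi))$ and $\zeta = \bigvee_{\phi \in \Phi} \xi_\phi$, so that $\xi \subseteq \salg_G(\zeta) \vee \cF$ and, by measure preservation of each $\phi$, $\mu(B) \cdot \sH_{\mu_B}(\zeta \given \cF) \leq \sH(\xi \given \cF) < \mu(B) \cdot \sH(\pv)$; this renormalization is precisely where the factor $\mu(B)$ is spent. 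Second, it applies Theorem \ref{thm:relbasic} not to $G \acts (X,\mu)$ but to the \emph{induced action} on $(B, \mu_B)$ of the group $\Gamma$ generated by the $\cF$-expressible return maps $\gamma_g$, which is ergodic and non-atomic; this produces a genuine partition $\alpha$ of $B$ with the prescribed marginals and $\zeta \res B \subseteq \salg_\Gamma(\alpha) \vee \cF$. The transfer back to $G$ is then the one piece of your sketch that is correct and is used as you describe: since every $\gamma \in \Gamma$ is $\cF$-expressible, Lemmas \ref{lem:expmove} and \ref{lem:expgroup} give $\salg_\Gamma(\alpha) \vee \cF \subseteq \salg_G(\alpha') \vee \cF$ for any extension $\alpha'$ of $\alpha$. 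Without the transport step and the passage to the induced full-group action, your outline does not close.
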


\begin{proof}
Let $\Phi \subseteq [[E_G^X]]$ be as given by Lemma \ref{lem:pack}. For $\phi \in \Phi$, define a partition $\xi_\phi$ of $X$ by
$$\xi_\phi = \Big\{ X \setminus \rng(\phi) \Big\} \cup \phi \Big( \xi \res \dom(\phi) \Big),$$
and set $\zeta = \bigvee_{\phi \in \Phi} \xi_\phi$. Note that $\zeta$ is countable since $\Phi$ is finite. Also observe that
\begin{equation} \label{eqn:subset}
\mu(\rng(\phi)) \cdot \sH_{\rng(\phi)}(\xi_\phi \given \cF) = \mu(\dom(\phi)) \cdot \sH_{\dom(\phi)}(\xi \given \cF)
\end{equation}
since $\phi$ is measure-preserving and $\phi(\cF \res \dom(\phi)) = \cF \res \rng(\phi)$ by Lemmas \ref{lem:expmove} and \ref{lem:expgroup}.

We claim that $\xi \subseteq \salg_G(\zeta) \vee \cF$. Consider $C \in \xi$ and $\phi \in \Phi$. Since $\phi$ is $\cF$-expressible, we have $\rng(\phi) \in \cF$. Thus $\xi_\phi \res \rng(\phi) \subseteq \salg_G(\zeta) \vee \cF$. It follows from Lemmas \ref{lem:expmove} and \ref{lem:expgroup} that
$$\phi^{-1}(\xi_\phi \res \rng(\phi)) \subseteq \salg_G(\zeta) \vee \cF.$$
Since $C \cap \dom(\phi)$ is an element of the set on the left, and since $C$ is the union of $C \cap \dom(\phi)$ for $\phi \in \Phi$, we conclude that $\xi \subseteq \salg_G(\zeta) \vee \cF$.

For $g \in G$ define $\gamma_g \in [[E_G^X]]$ with $\dom(\gamma_g) = \rng(\gamma_g) = B$ by the rule
$$\gamma_g(x) = y \Longleftrightarrow y = g^i \cdot x \text{ where } i > 0 \text{ is least with } g^i \cdot x \in B.$$
By the Poincar\'{e} recurrence theorem, the domain and range of $\gamma_g$ are indeed conull in $B$. Note that $\gamma_g$ is $\cF$-expressible since $B \in \cF$. Let $\Gamma$ be the group of transformations of $B$ generated by $\{\gamma_g : g \in G\}$. Then every $\gamma \in \Gamma$ is $\cF$ expressible by Lemma \ref{lem:expgroup}. Let $\mu_B$ denote the normalized restriction of $\mu$ to $B$, so that $\mu_B(A) = \mu(A \cap B) / \mu(B)$. Since $\mu$ is ergodic, it is not difficult to check that the action of $\Gamma$ on $(B, \mu_B)$ is ergodic. Similarly, since $\mu$ is non-atomic $\mu_B$ is non-atomic as well. Using (\ref{eqn:subset}) and the fact that $\dom(\phi), \rng(\phi) \in \cF$, we have
\begin{align*}
\mu(B) \cdot \sH_{\mu_B}(\zeta \given \cF) & = \mu(B) \cdot \sH_B(\zeta \given \cF)\\
 & \leq \sum_{\phi \in \Phi} \mu(B) \cdot \sH_B(\xi_\phi \given \cF)\\
 & = \sum_{\phi \in \Phi} \mu(B) \cdot \sH_B(\xi_\phi \given \{\rng(\phi), X \setminus \rng(\phi)\} \vee \cF)\\
 & = \sum_{\phi \in \Phi} \mu(\rng(\phi)) \cdot \sH_{\rng(\phi)}(\xi_\phi \given \cF)\\
 & = \sum_{\phi \in \Phi} \mu(\dom(\phi)) \cdot \sH_{\dom(\phi)}(\xi \given \cF)\\
 & = \sH(\xi \given \cF)\\
 & < \mu(B) \cdot \sH(\pv).
\end{align*}
So by Theorem \ref{thm:relbasic} there is a partition $\alpha = \{A_i : 0 \leq i < |\pv|\}$ of $B$ with $\mu_B(A_i) = p_i$ for every $0 \leq i < |\pv|$ and with $\zeta \res B \subseteq \salg_\Gamma(\alpha) \vee \cF$. Since $\zeta \res (X \setminus B)$ is trivial and $X \setminus B \in \cF$, it follows that $\zeta \subseteq \salg_\Gamma(\alpha) \vee \cF$.

Since $A_i \subseteq B$ and $\mu_B(A_i) = p_i$, it follows that $\mu(A_i) = p_i \cdot \mu(B)$. Now let $\alpha'$ be a partition of $X$ extending $\alpha$. Since $\Gamma$ is $\cF$-expressible, it follows from Lemma \ref{lem:expmove} that $\salg_G(\alpha') \vee \cF$ is $\Gamma$-invariant. Since also $B \in \cF$ and $\alpha = \alpha' \res B$, we have $\salg_\Gamma(\alpha) \vee \cF \subseteq \salg_G(\alpha') \vee \cF$. Therefore $\zeta \subseteq \salg_G(\alpha') \vee \cF$ and hence
\begin{equation*}
\xi \subseteq \salg_G(\zeta) \vee \cF \subseteq \salg_G(\alpha') \vee \cF. \qedhere
\end{equation*}
\end{proof}

The following lemma is, in some ways, a strengthening of Theorem \ref{thm:relbasic}.

\begin{lem} \label{lem:superrel}
Let $G \acts (X, \mu)$ be a {\pmp} ergodic action with $(X, \mu)$ non-atomic, let $\cF$ be a $G$-invariant sub-$\sigma$-algebra, and let $\xi$ be a countable Borel partition of $X$. If $\beta \subseteq \cF$ is a collection of pairwise disjoint Borel sets and $\{\pv^B : B \in \beta\}$ is a collection of probability vectors with
$$\sH(\xi \given \cF) < \sum_{B \in \beta} \mu(B) \cdot \sH(\pv^B),$$
then there is a partition $\alpha = \{A_i : 0 \leq i < |\alpha|\}$ of $\cup \beta$ with $\mu(A_i \cap B) = p^B_i \cdot \mu(B)$ for every $B \in \beta$ and $0 \leq i < |\alpha|$ and with $\xi \subseteq \salg_G(\alpha') \vee \cF$ for every partition $\alpha'$ of $X$ extending $\alpha$.
\end{lem}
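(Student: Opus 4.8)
The plan is to reduce Lemma~\ref{lem:superrel} to Lemma~\ref{lem:subset} by treating each $B \in \beta$ separately and then assembling the pieces. The statement of Lemma~\ref{lem:subset} already handles a single set $B \in \cF$ with its own probability vector, so the essential content here is that we can perform the construction simultaneously over the disjoint family $\{B : B \in \beta\}$ while arranging for a \emph{single} partition $\alpha$ whose pieces cut across all the $B$'s with the prescribed relative proportions $\mu(A_i \cap B) = p^B_i \cdot \mu(B)$, and such that the recoverability condition $\xi \subseteq \salg_G(\alpha') \vee \cF$ holds for every extension $\alpha'$.

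First I would observe that the hypothesis
$$\sH(\xi \given \cF) < \sum_{B \in \beta} \mu(B) \cdot \sH(\pv^B)$$
is a sum over $B$, which suggests distributing the entropy budget. Since $\sH(\xi \given \cF) = \int_Y \sH_{\mu_y}(\xi) \, d\nu(y)$ splits over the fibers, and each $B \in \cF$ is a union of fibers, I would try to find for each $B$ an auxiliary countable partition $\xi_B$ supported so that the ``local'' inequality $\sH(\xi_B \given \cF) < \mu(B) \cdot \sH(\pv^B)$ holds, with the $\xi_B$ jointly encoding enough of $\xi$. The natural choice, mimicking the proof of Lemma~\ref{lem:subset}, is to use the packing maps from Lemma~\ref{lem:pack}: for each $B$ choose a finite family $\Phi_B \subseteq [[E_G^X]]$ of $\cF$-expressible maps with ranges inside $B$ whose domains partition $X$, push $\xi$ into $B$ via these maps, and record the result as $\zeta_B = \bigvee_{\phi \in \Phi_B} \phi(\xi \res \dom(\phi))$. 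As in the earlier lemma, $\cF$-expressibility gives $\xi \subseteq \salg_G(\zeta_B) \vee \cF$ for each $B$ individually, and the measure-preserving identity~(\ref{eqn:subset}) gives $\mu(B) \cdot \sH_B(\zeta_B \given \cF) \le \sH(\xi \given \cF)$. I would then invoke Theorem~\ref{thm:relbasic} applied to the return-map group $\Gamma_B$ on $(B, \mu_B)$ to produce, for each $B$, a partition $\alpha^B = \{A^B_i\}$ of $B$ with $\mu_B(A^B_i) = p^B_i$ and $\zeta_B \res B \subseteq \salg_{\Gamma_B}(\alpha^B) \vee \cF$, exactly as in Lemma~\ref{lem:subset}.

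With the pieces $\alpha^B$ in hand, I would define $\alpha = \{A_i\}$ by $A_i = \bigcup_{B \in \beta} A^B_i$ (padding the index set to the common length $|\alpha| = \sup_B |\pv^B|$ by allowing empty pieces, which the paper's convention permits). This immediately gives $\mu(A_i \cap B) = \mu(A^B_i) = p^B_i \cdot \mu(B)$. For the recoverability, the key point is that since each $B \in \cF$ and $\alpha \res B = \alpha^B$, any extension $\alpha'$ of $\alpha$ satisfies $\salg_{\Gamma_B}(\alpha^B) \vee \cF \subseteq \salg_G(\alpha') \vee \cF$ (the argument at the end of Lemma~\ref{lem:subset}, using $\cF$-expressibility of $\Gamma_B$ and $B \in \cF$ via Lemma~\ref{lem:expmove}), hence $\zeta_B \subseteq \salg_G(\alpha') \vee \cF$, hence $\xi \subseteq \salg_G(\alpha') \vee \cF$ for each $B$, and in particular overall.

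The main obstacle I anticipate is the entropy bookkeeping: the hypothesis gives a \emph{weighted} inequality $\sum_B \mu(B)\sH(\pv^B) > \sH(\xi \given \cF)$, not a per-$B$ inequality, so I cannot simply apply Lemma~\ref{lem:subset} to each $B$ with the full partition $\xi$ (that would require $\sH(\xi \given \cF) < \mu(B)\sH(\pv^B)$ separately, which can fail). The correct route is to apply Theorem~\ref{thm:relbasic} \emph{once}, globally over $\cup\beta$, with the single source partition $\zeta = \bigvee_{B \in \beta}\zeta_B$ (or, more carefully, to set up a single $\cF$-expressible group acting on $\cup\beta$ and a single target distribution $\{\pv^B\}$ indexed by the fibers, so that the integrated inequality $\sH(\zeta \given \cF) \le \sH(\xi\given\cF) < \sum_B \mu(B)\sH(\pv^B)$ feeds directly into the hypothesis of Theorem~\ref{thm:relbasic}). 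Verifying that the $B$-dependent target vectors can be encoded as the relevant per-fiber entropy bound in the hypothesis of Theorem~\ref{thm:relbasic}, and that the resulting partition restricts correctly to each $B$, is where the real care is needed; everything else is a routine adaptation of Lemma~\ref{lem:subset}.
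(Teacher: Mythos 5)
You have correctly isolated the obstacle: the hypothesis is a weighted sum $\sum_{B} \mu(B)\cdot\sH(\pv^B) > \sH(\xi\given\cF)$, so Lemma \ref{lem:subset} cannot be applied to each $B$ with the full partition $\xi$ as source. But your proposed resolution does not close the gap. Theorem \ref{thm:relbasic}, as stated, takes a \emph{single} probability vector $\pv$ and produces a pre-partition with prescribed \emph{global} measures $\mu(A_i) = r p_i$; it says nothing about the conditional measures $\mu(A_i\cap B)/\mu(B)$ relative to a prescribed family $\beta\subseteq\cF$, and there is no ``per-fiber entropy bound'' in its hypothesis into which the data $\{\pv^B\}$ could be fed. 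Prescribing fiber-dependent distributions is exactly the content of the lemma you are trying to prove, so invoking a fiber-dependent version of Theorem \ref{thm:relbasic} is circular. (A smaller problem: for $\zeta=\bigvee_{B}\zeta_B$ with each $\zeta_B$ a packed copy of $\xi$ inside $B$, one only gets $\sH(\zeta\given\cF)=\sum_B\mu(B)\cdot\sH_B(\zeta_B\given\cF)\leq |\beta|\cdot\sH(\xi\given\cF)$, not $\sH(\zeta\given\cF)\leq\sH(\xi\given\cF)$ as you assert, since each $\zeta_B$ carries a full compressed copy of $\xi$.)

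The missing idea, which is how the paper proceeds, is an intermediate entropy-redistribution step. Choose for each $B$ a probability vector $\qv^B$ with $\mu(B)\cdot\sH(\pv^B)-\epsilon\cdot\mu(B) < \sH(\qv^B) < \mu(B)\cdot\sH(\pv^B)$, and let $\rv$ be their independent join, so that $\sH(\rv)=\sum_B\sH(\qv^B) > \sH(\xi\given\cF)$ for suitable $\epsilon$. Apply Theorem \ref{thm:relbasic} \emph{once}, with target $\rv$, to get a partition $\gamma$ with $\xi\subseteq\salg_G(\gamma)\vee\cF$ and $\dist(\gamma)=\rv$; then $\gamma=\bigvee_B\gamma^B$ where the coarsening $\gamma^B$ corresponding to $\qv^B$ satisfies $\sH(\gamma^B)=\sH(\qv^B)<\mu(B)\cdot\sH(\pv^B)$. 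Now Lemma \ref{lem:subset} applies to each pair $(B,\gamma^B)$ separately --- the per-$B$ entropy hypothesis holds because the source is $\gamma^B$, not $\xi$ --- and recovering every $\gamma^B$ from an extension $\alpha'$ recovers $\gamma$ and hence $\xi$. The rest of your assembly (setting $A_i=\bigcup_B A^B_i$ with padded indices, and the recoverability argument via $B\in\cF$) is fine and matches the paper.
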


\begin{proof}
Without loss of generality, we may assume that $\beta$ consists of non-null sets and that each probability vector $\pv^B$ is non-trivial. Fix $\epsilon > 0$ with
$$\sH(\xi \given \cF) < \sum_{B \in \beta} \mu(B) \cdot \sH(\pv^B) - \epsilon \cdot \mu(\cup \beta).$$
For each $B \in \beta$, fix any probability vector $\qv^B$ satisfying
$$\mu(B) \cdot \sH(\pv^B) - \epsilon \cdot \mu(B) < \sH(\qv^B) < \mu(B) \cdot \sH(\pv^B).$$
Let $\rv$ be the probability vector which represents the independent join of the $\qv^B$'s. Specifically, $\rv = (r_\pi)_{\pi \in \N^\beta}$ where
$$r_\pi = \prod_{B \in \beta} q^B_{\pi(B)}.$$
Then
$$\sH(\rv) = \sum_{B \in \beta} \sH(\qv^B) > \sum_{B \in \beta} \mu(B) \cdot \sH(\pv^B) - \epsilon \cdot \mu(\cup \beta) > \sH(\xi \given \cF).$$
So by Theorem \ref{thm:relbasic} there is a partition $\gamma = \{C_\pi : \pi \in \N^\beta\}$ with $\xi \subseteq \salg_G(\gamma) \vee \cF$ and with $\mu(C_\pi) = r_\pi$ for every $\pi \in \N^\beta$.

For each $B \in \beta$, let $\gamma^B$ be the coarsening of $\gamma$ associated to $\qv^B$. Specifically, $\gamma^B = \{C^B_i : 0 \leq i < |\qv^B|\}$ where
$$C^B_i = \bigcup_{\substack{\pi \in \N^\beta\\\pi(B) = i}} C_\pi.$$
Note that $\gamma = \bigvee_{B \in \beta} \gamma^B$. Also note that $\mu(C^B_i) = q^B_i$ and $\sH(\gamma^B) = \sH(\qv^B) < \mu(B) \cdot \sH(\pv^B)$. For each $B \in \beta$ we apply Lemma \ref{lem:subset} to $\gamma^B$ in order to obtain a partition $\alpha^B = \{A^B_i : 0 \leq i < |\pv^B|\}$ of $B$ with $\mu(A^B_i) = \mu(B) \cdot p^B_i$ and $\gamma^B \subseteq \salg_G(\zeta) \vee \cF$ for every partition $\zeta$ of $X$ extending $\alpha^B$. Now define $\alpha = \{A_i : 0 \leq i < |\alpha|\}$ where $A_i = \bigcup_{B \in \beta} A^B_i$. Then for $B \in \beta$ and $0 \leq i < |\alpha|$ we have $\mu(A_i \cap B) = \mu(A_i^B) = p_i^B \cdot \mu(B)$. Furthermore, if $\alpha'$ is a partition of $X$ which extends $\alpha$, then $\alpha'$ extends every $\alpha^B$ and hence $\gamma^B \subseteq \salg_G(\alpha') \vee \cF$. It follows that
\begin{equation*}
\xi \subseteq \salg_G(\gamma) \vee \cF \subseteq \salg_G(\alpha') \vee \cF. \qedhere
\end{equation*}
\end{proof}

We will need the result of Ab\'{e}rt and Weiss that all free actions weakly contain Bernoulli shifts \cite{AW13}. The following is a slightly modified statement of their result, obtained by invoking \cite[Lemma 5]{AW13} and performing a perturbation.

\begin{thm}[Ab\'{e}rt--Weiss \cite{AW13}]
Let $G \acts (X, \mu)$ be a {\pmp} free action, and let $\pv = (p_i)$ be a finite probability vector. If $T \subseteq G$ is finite and $\epsilon > 0$, then there is a partition $\gamma = \{C_i : 0 \leq i < |\pv|\}$ of $X$ such that $\mu(C_i) = p_i$ for every $0 \leq i < |\pv|$ and $\sH(\gamma^T) / |T| > \sH(\gamma) - \epsilon$.
\end{thm}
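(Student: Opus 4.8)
The plan is to separate the statement into two parts: the genuinely hard near-independence of the $T$-translates, which I would extract from Ab\'ert--Weiss, and the cosmetic adjustment of the distribution to be \emph{exactly} $\pv$, which I would obtain by a perturbation. First observe that, by standard properties of Shannon entropy \cite[Fact 3.1.3]{Do11}, the desired inequality $\sH(\gamma^T)/|T| > \sH(\gamma) - \epsilon$ is just a quantitative way of saying that the translates $\{t \cdot \gamma : t \in T\}$ are close to being mutually independent. The real content is therefore to produce, in an \emph{arbitrary} free action, a finite-valued partition whose $T$-translates are nearly independent; this is exactly the point at which non-amenability bites, since one cannot in general tile most of $X$ by disjoint translates of a set with small $T$-boundary. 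This difficulty is resolved by \cite[Lemma 5]{AW13}, which I would invoke to obtain, for a prescribed accuracy $\delta > 0$, a finite partition $\gamma' = \{C_i' : 0 \le i < |\pv|\}$ with $\sum_i |\mu(C_i') - p_i| < \delta$ and $\tfrac{1}{|T|}\sH((\gamma')^T) > \sH(\gamma') - \epsilon/2$.

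It then remains to correct the distribution. Since $G$ is infinite and acts freely and measure-preservingly, $(X,\mu)$ is non-atomic, so I can redistribute mass among the pieces at will. Concretely, I would transfer Borel subsets from the pieces $C_i'$ with $\mu(C_i') > p_i$ to those with $\mu(C_i') < p_i$, obtaining a new partition $\gamma = \{C_i : 0 \le i < |\pv|\}$ with $\mu(C_i) = p_i$ for every $i$ and with total transferred mass $\sum_i \mu(C_i \symd C_i') \le \delta$. Because $|\pv|$ is finite, the Rokhlin distance $\dR_\mu(\gamma, \gamma') = \sH(\gamma \given \gamma') + \sH(\gamma' \given \gamma)$ is controlled by $\delta$ and tends to $0$ as $\delta \to 0$; in particular I can arrange $\dR_\mu(\gamma,\gamma') < \epsilon/4$.

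Finally I would propagate the near-independence from $\gamma'$ to $\gamma$ using two standard features of the Rokhlin metric: that $|\sH(\gamma) - \sH(\gamma')| \le \dR_\mu(\gamma,\gamma')$, and that, since each $t \in T$ acts measure-preservingly and $\dR_\mu$ is subadditive under joins, $\dR_\mu(\gamma^T, (\gamma')^T) \le \sum_{t \in T} \dR_\mu(t\cdot\gamma, t\cdot\gamma') = |T| \cdot \dR_\mu(\gamma,\gamma')$. Combining these with the output of \cite[Lemma 5]{AW13} yields $\tfrac{1}{|T|}\sH(\gamma^T) \ge \tfrac{1}{|T|}\sH((\gamma')^T) - \dR_\mu(\gamma,\gamma') > \sH(\gamma') - \epsilon/2 - \dR_\mu(\gamma,\gamma') \ge \sH(\gamma) - 2\dR_\mu(\gamma,\gamma') - \epsilon/2$, which is $> \sH(\gamma) - \epsilon$ once $\dR_\mu(\gamma,\gamma') < \epsilon/4$. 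The main obstacle is entirely contained in the near-independence step supplied by Ab\'ert--Weiss; the only subtlety in the part I carry out myself is that the perturbation is performed on the single-site partition $\gamma$ yet must be controlled after passing to the $|T|$-fold join $\gamma^T$, which the $G$-invariance and join-subadditivity of $\dR_\mu$ render routine.
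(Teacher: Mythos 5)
Your proposal is correct and takes essentially the same approach as the paper: the paper offers no detailed argument, saying only that the statement is ``obtained by invoking \cite[Lemma 5]{AW13} and performing a perturbation,'' and your write-up supplies exactly that perturbation, with the propagation step matching the properties of $\dR_\mu$ recorded in Lemma \ref{lem:dbarr} (items (i) and (iii)) and the uniform equivalence of $\dB_\mu$ and $\dR_\mu$ on partitions with a bounded number of pieces.
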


We are almost ready to construct approximately Bernoulli generating partitions. For this construction we will find it more convenient to use Borel partitions of $([0, 1], \lambda)$, where $\lambda$ is Lebesgue measure, in place of probability vectors. We first make a simple observation.

\begin{lem} \label{lem:upmap}
If $\cQ \leq \cP$ are finite partitions of $([0, 1], \lambda)$ and $0 < r < \sH(\cP \given \cQ)$, then there is a finite partition $\cR$ such that $\cQ \leq \cR$ and $\sH(\cP \given \cR) = r$.
\end{lem}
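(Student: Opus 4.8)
The plan is to construct a one-parameter family of partitions $\cR_t$, $t \in [0,1]$, each refining $\cQ$, along which the conditional entropy $\sH(\cP \given \cR_t)$ decreases continuously (in fact linearly) from $\sH(\cP \given \cQ)$ at $t = 0$ down to $0$ at $t = 1$, and then to read off the value of $t$ that produces the target $r$. Since everything is built by splitting cells proportionally, the dependence on $t$ can be made exactly affine, so no appeal to the intermediate value theorem is even required.

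First I would fix $t \in [0,1]$ and build $\cR_t$ locally inside each cell $Q \in \cQ$, exploiting that $([0,1], \lambda)$ is non-atomic. For each $\cP$-cell $P$ meeting $Q$, split the piece $P \cap Q$ into two Borel sets of measures $t \cdot \lambda(P \cap Q)$ and $(1-t) \cdot \lambda(P \cap Q)$. Let $Q'_t$ be the union over $P$ of the $t$-fraction pieces and set $Q''_t = Q \setminus Q'_t$, so that $\lambda(Q'_t) = t \cdot \lambda(Q)$ and $\lambda(Q''_t) = (1-t) \cdot \lambda(Q)$. Then declare $\cR_t$ to restrict, on each $Q$, to the single class $Q''_t$ together with the classes $P \cap Q'_t$ for $P \in \cP$. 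Every $\cR_t$-class lies inside a single $\cQ$-cell, so $\cQ \leq \cR_t$, and $\cR_t$ is finite.

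The key computation is that $\sH(\cP \given \cR_t) = (1-t) \cdot \sH(\cP \given \cQ)$. Each class $P \cap Q'_t$ lies inside a single $\cP$-cell, so it contributes $0$ to the conditional entropy; meanwhile on $Q''_t$ the normalized $\cP$-distribution is $\lambda_{Q''_t}(P) = (1-t)\lambda(P \cap Q) \big/ \big( (1-t)\lambda(Q) \big) = \lambda_Q(P)$, whence $\sH_{Q''_t}(\cP) = \sH_Q(\cP)$. Summing the weighted local contributions gives $\sum_{Q \in \cQ} (1-t)\lambda(Q) \cdot \sH_Q(\cP) = (1-t) \cdot \sH(\cP \given \cQ)$. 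Given $0 < r < \sH(\cP \given \cQ)$, setting $t = 1 - r / \sH(\cP \given \cQ) \in (0,1)$ and $\cR = \cR_t$ then yields $\sH(\cP \given \cR) = r$, as desired.

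The only delicate point, and the sole place where non-atomicity is essential, is arranging the merged piece $Q''_t$ to carry exactly the same normalized $\cP$-distribution as $Q$ itself; splitting each $P \cap Q$ by the \emph{same} ratio $t$ is precisely what guarantees $\sH_{Q''_t}(\cP) = \sH_Q(\cP)$ and makes $t \mapsto \sH(\cP \given \cR_t)$ exactly linear, allowing the value of $t$ to be solved for in closed form. I do not anticipate any real obstacle beyond verifying this proportional-splitting identity and confirming $\cQ \leq \cR_t$.
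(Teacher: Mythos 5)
Your proof is correct. It follows the same high-level strategy as the paper --- a one-parameter family of partitions refining $\cQ$ that interpolates from $\cQ$ (at $t=0$) to $\cP$ (at $t=1$) --- but the execution differs in a meaningful way. The paper simply asserts the existence of a $\dR_\lambda$-continuous family $\cQ_t$ with $\cQ_0 = \cQ$, $\cQ_1 = \cP$, and $\cQ \leq \cQ_t$, and then invokes the intermediate value theorem on $t \mapsto \sH(\cP \given \cQ_t)$; it does not construct the family or verify the continuity. Your proportional-splitting construction supplies such a family explicitly and, better, makes the conditional entropy exactly affine: since each class $P \cap Q'_t$ is contained in a single $\cP$-cell it contributes nothing, while splitting every $P \cap Q$ by the \emph{same} ratio $t$ forces $\sH_{Q''_t}(\cP) = \sH_Q(\cP)$, giving $\sH(\cP \given \cR_t) = (1-t)\,\sH(\cP \given \cQ)$ and a closed-form value of $t$. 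This buys you a self-contained, softness-free argument (no appeal to continuity of conditional entropy along an abstract path), at the cost of a slightly longer computation; the only hypothesis you use beyond the paper's is non-atomicity of $([0,1],\lambda)$, which is of course available. One small point worth stating explicitly if you write this up: $\cR_t$ is finite because $|\cR_t| \leq |\cQ| + |\cP \vee \cQ| < \infty$, and $\cQ \leq \cR_t$ because every class of $\cR_t$ lies inside a single $\cQ$-cell by construction.
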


\begin{proof}
Fix a $\dR_\lambda$-continuous $1$-parameter family of finite partitions $\cQ_t$, $0 \leq t \leq 1$, such that $\cQ_0 = \cQ$, $\cQ_1 = \cP$, and $\cQ \leq \cQ_t$ for all $t$. The function $t \mapsto \sH(\cP \given \cQ_t)$ is continuous, $\sH(\cP \given \cQ_0) = \sH(\cP \given \cQ) > r$, and $\sH(\cP \given \cQ_1) = \sH(\cP \given \cP) = 0$. Therefore there is $t \in (0, 1)$ with $\sH(\cP \given \cQ_t) = r$. Set $\cR = \cQ_t$.
\end{proof}

For countable partitions $\alpha$ and $\beta$ of $(X, \mu)$ we define
$$\dB_\mu(\alpha, \beta) = \inf \Big\{ \mu(Y) \: Y \subseteq X \text{ and } \alpha \res (X \setminus Y) = \beta \res (X \setminus Y) \Big\}.$$
The function $\dB_\mu$ defines a metric on the space of countable partitions, and in fact for every $n \in \N$ the restrictions of $\dB_\mu$ and $\dR_\mu$ to the space of $n$-piece partitions are uniformly equivalent \cite[Fact 1.7.7]{Do11}. We will temporarily need to use this metric in the proof of the next theorem.

Recall that for a countable ordered partition $\alpha = \{A_i : 0 \leq i < |\alpha|\}$ we let $\dist(\alpha)$ denote the probability vector having $i^{\text{th}}$ term $\mu(A_i)$. For $B \subseteq X$ we also write $\dist_B(\alpha)$ for the probability vector having $i^{\text{th}}$ term $\mu(A_i \cap B) / \mu(B)$.

\begin{thm} \label{thm:fake}
Let $G$ be a countably infinite group and let $G \acts (X, \mu)$ be a free {\pmp} ergodic action. Let $\cP$ and $\cQ$ be ordered countable partitions of $([0, 1], \lambda)$ with $\cQ \leq \cP$ and $\sH(\cP) < \infty$. If $\rh_G(X, \mu) < \sH(\cP \given \cQ)$, then for every finite $T \subseteq G$ and $\epsilon > 0$ there is an ordered generating partition $\alpha$ with $\dist(\alpha) = \dist(\cP)$,
$$\frac{1}{|T|} \cdot \sH(\alpha^T) > \sH(\alpha) - \epsilon,$$
and $\rh_{G, \mu}(\beta) < \epsilon$, where $\beta$ is the coarsening of $\alpha$ corresponding to $\cQ \leq \cP$.
\end{thm}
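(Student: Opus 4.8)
Theorem \ref{thm:fake} asks us to build a generating partition $\alpha$ with a prescribed distribution $\dist(\cP)$ that is almost Bernoulli over a finite window $T$, while simultaneously making a prescribed coarsening $\beta$ (corresponding to $\cQ \leq \cP$) nearly expressible via the group action (small outer Rokhlin entropy). The plan is to combine three ingredients already developed: the Ab\'{e}rt--Weiss theorem (which produces a partition with the right distribution that is approximately Bernoulli on $T$, but not generating), Theorem \ref{thm:robin} (which produces a free factor of arbitrarily small Rokhlin entropy), and Lemma \ref{lem:superrel} together with Lemma \ref{lem:subset} (which allow us to overwrite a partition on a small-measure set while absorbing a given partition into the resulting $\sigma$-algebra, preserving prescribed conditional distributions).

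\smallskip

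Here is the order I would carry things out. First, using Theorem \ref{thm:robin}, pass to a free factor $G \acts (Z, \eta)$ with $\rh_G(Z, \eta) < \epsilon/2$; let $\cF = \salg_G(Z)$ be the associated $G$-invariant sub-$\sigma$-algebra of $X$. The smallness of $\rh_G(Z,\eta)$ is what will eventually make $\rh_{G,X}(\beta)$ small. Second, apply the Ab\'{e}rt--Weiss theorem to the free action on $Z$ to obtain a partition $\gamma$ of $Z$ with $\dist(\gamma) = \dist(\cP)$ and $\sH(\gamma^T)/|T| > \sH(\gamma) - \epsilon/2$; pull $\gamma$ back to a partition of $X$ that is $\cF$-measurable, so that its $T$-translates have exactly the same joint distribution in $X$. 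This $\cF$-measurable $\gamma$ is our approximately-Bernoulli skeleton, but it need not be generating for $X$. Third, I need to correct $\gamma$ so that it generates $X$ while barely disturbing its $T$-marginal statistics and keeping $\dist$ fixed. This is where the Krieger-type machinery enters: since $\rh_G(X, \mu) < \sH(\cP \given \cQ)$, I fix a countable generating partition $\xi$ for $X$ with $\sH(\xi) < \sH(\cP \given \cQ)$, and I choose a Borel set $B \subseteq X$ of very small measure $\delta$, lying in $\cF$, on which I will repaint $\gamma$ using Lemma \ref{lem:superrel} so as to absorb $\xi$ into $\salg_G(\alpha')\vee\cF$ for every extension, while on $B$ preserving the conditional distributions $\dist_B$ dictated by $\cP$ (and $\cQ$ for the coarsening). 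Because $B$ has measure $\delta$, the $d_\mu$-distance between the repainted $\alpha$ and $\gamma$ is at most $\delta$, so by uniform equivalence of $\dB_\mu$ and $\dR_\mu$ on finitely-many-piece partitions, the entropy quantities $\sH(\alpha^T)$ and $\sH(\alpha)$ differ from those of $\gamma$ by $o(1)$ as $\delta \to 0$; choosing $\delta$ small enough preserves the inequality $\sH(\alpha^T)/|T| > \sH(\alpha) - \epsilon$.

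\smallskip

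The delicate bookkeeping — and the step I expect to be the main obstacle — is arranging all three requirements on the \emph{same} partition $\alpha$ at once: (a) the global distribution $\dist(\alpha) = \dist(\cP)$ exactly, (b) the approximate-Bernoulli inequality over $T$, and (c) that the designated coarsening $\beta$ has $\rh_{G,X}(\beta) < \epsilon$ and that $\alpha$ is actually generating. The tension is that (b) wants $\alpha$'s $T$-translates nearly independent, which is a statement about the $\cF$-measurable skeleton, whereas generating and the absorption of $\xi$ require genuine new information not contained in $\cF$; these must be confined to the small set $B$ so they do not wreck independence on $T$. To keep $\dist(\alpha)=\dist(\cP)$ exactly while repainting on $B$, I would use Lemma \ref{lem:subset}/\ref{lem:superrel} with conditional vectors $\pv^B$ chosen so that the measures on $B$ plus the measures on $X \setminus B$ (where $\alpha$ agrees with $\gamma$) recombine to give exactly $\dist(\cP)$; this is a routine but careful measure-matching argument. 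For the coarsening: define $\beta$ from $\alpha$ exactly as $\cQ$ arises from $\cP$, ensure that on $X\setminus B$ this coarsening coincides with the pullback of the $\cQ$-coarsening of $\gamma$ (hence $\cF$-measurable), so that $\beta$ differs from an $\cF$-measurable partition only on $B$; then $\rh_{G,X}(\beta) \leq \rh_G(Z,\eta) + \sH(\beta \res B)\cdot\mu(B)$-type bound using Corollary \ref{cor:add2}, which is below $\epsilon$ once $\delta$ is small and $\rh_G(Z,\eta) < \epsilon/2$. Finally, generation of $\alpha$ follows because $\xi \subseteq \salg_G(\alpha)\vee\cF \subseteq \salg_G(\alpha)$ once we further arrange $\cF$ itself to be generated by $\alpha$ — this last point requires that the Ab\'{e}rt--Weiss skeleton $\gamma$ already generate $\cF$, which we can ensure by taking $\gamma$ to be a generating partition for the factor $Z$ (possible since $\cF = \salg_G(Z)$ and $\gamma$ has the full prescribed distribution), so that $\cF \subseteq \salg_G(\gamma) \subseteq \salg_G(\alpha)$ up to the correction on $B$.
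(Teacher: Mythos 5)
There is a genuine gap, and it sits exactly at the step you flag as ``the main obstacle.'' Your plan is to take an $\cF$-measurable Ab\'{e}rt--Weiss skeleton $\gamma$ with $\dist(\gamma)=\dist(\cP)$ and then repaint it on a set $B\in\cF$ of small measure $\delta$, using Lemma \ref{lem:superrel} to absorb a generating partition $\xi$. But Lemma \ref{lem:superrel} (and Lemma \ref{lem:subset}) requires $\sH(\xi\given\cF) < \sum_{B'\in\beta}\mu(B')\cdot\sH(\pv^{B'})$, and if the repainting is confined to a set of measure $\delta$ with conditional distributions constrained to recombine to $\dist(\cP)$, the right-hand side is $O(\delta)$. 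The left-hand side is essentially $\rh_G(X,\mu\given\cF)\approx\rh_G(X,\mu)$, a fixed positive quantity (the hypothesis only gives $\rh_G(X,\mu)<\sH(\cP\given\cQ)$, not that it is small). So the information-theoretic capacity of $B$ shrinks to zero while the amount of information you must encode does not, and the perturbation argument ($\dB_\mu(\alpha,\gamma)\le\delta$ hence $\sH(\alpha^T)\approx\sH(\gamma^T)$) never gets off the ground except when $\rh_G(X,\mu)=0$. A secondary gap: Ab\'{e}rt--Weiss gives no control over whether $\gamma$ generates the factor $Z$, so your final claim $\cF\subseteq\salg_G(\gamma)$ is unsupported.

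The paper's proof resolves this tension with two ideas absent from your proposal. First, it interpolates: Lemma \ref{lem:upmap} produces $\cR$ with $\cQ\le\cR\le\cP$ and $\sH(\cP\given\cR)$ only barely above $\rh_G(X,\mu)$; the Ab\'{e}rt--Weiss skeleton $\gamma$ is taken with distribution $\dist(\cR)$ (not $\dist(\cP)$), and the genuinely new information is written on a set $Z_1$ of measure $1-\kappa$ --- nearly \emph{all} of $X$ --- as a refinement of $\gamma$ with conditional distributions $\dist_{R_k}(\cP)$, so the entropy budget $\sum_k\mu(C_k\cap Z_1)\sH(\pv^{C_k\cap Z_1})=(1-\kappa)\sH(\cP\given\cR)$ exceeds $\rh_G(X,\mu\given\cF)$ as required; $\cF$ itself is recovered from a tiny auxiliary piece $Z_0$ via Theorem \ref{thm:relbasic}. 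Second, since the repainting is not small, approximate Bernoullicity of $\alpha$ cannot be obtained by perturbation; instead the paper shows $\sH(\alpha\given\cF)$ is within $\epsilon/(64|T|^3)$ of $\rh_G(X,\mu\given\cF)$ and invokes Theorem \ref{thm:drop} in the contrapositive --- if $\sH(\alpha^T\given\cF)/|T|$ were much less than $\sH(\alpha\given\cF)$, the Rokhlin entropy could be pushed strictly below itself --- and then combines this with the Ab\'{e}rt--Weiss inequality for $\gamma^T$ via the chain rule. That entropy-rigidity step is the heart of the theorem and is what your outline is missing.
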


\begin{proof}
First assume that $\cP$ is finite. Apply Lemma \ref{lem:upmap} to obtain a finite partition $\cR$ of $[0, 1]$ which is finer than $\cQ$ and satisfies
$$\rh_G(X, \mu) < \sH(\cP \given \cR) < \rh_G(X, \mu) + \frac{\epsilon}{256 \cdot |T|^3}.$$
Without loss of generality, we may assume that $\lambda(R) > 0$ for every $R \in \cR$. Set $s = \min_{R \in \cR} \lambda(R)$. Since $\dB_\mu$ and $\dR_\mu$ are uniformly equivalent on the space of partitions of $X$ having at most $|\cP|$ pieces, there is
$$0 < \kappa < \frac{\epsilon}{256 \cdot |T|^3 \cdot \sH(\cP)}$$
satisfying
$$\rh_G(X, \mu) < (1 - \kappa) \cdot \sH(\cP \given \cR)$$
such that $\dR_\mu(\xi, \xi') < \epsilon / 8$ whenever $\xi$ and $\xi'$ are partitions of $X$ with at most $|\cP|$ pieces and with $\dB_\mu(\xi, \xi') \leq \kappa$.

By Theorem \ref{thm:robin}, there is a factor $G \acts (Y, \nu)$ of $(X, \mu)$ such that
$$\rh_G(Y, \nu) < s \kappa \cdot \sH(\cP) < \frac{\epsilon}{256 \cdot |T|^3}$$
and $G$ acts freely on $(Y, \nu)$. Let $\cF$ be the sub-$\sigma$-algebra of $X$ associated to $(Y, \nu)$. Note that by sub-additivity
$$\rh_G(X, \mu) \leq \rh_G(X, \mu \given \cF) + \rh_G(Y, \nu) < \rh_G(X, \mu \given \cF) + \frac{\epsilon}{256 \cdot |T|^3}.$$
Therefore
\begin{equation} \label{eqn:fake1}
\sH(\cP \given \cR) < \rh_G(X, \mu) + \frac{\epsilon}{256 \cdot |T|^3} < \rh_G(X, \mu \given \cF) + \frac{\epsilon}{128 \cdot |T|^3}.
\end{equation}
Since $G$ acts freely on $(Y, \nu)$, the Ab\'{e}rt--Weiss theorem implies that there is an ordered partition $\gamma = \{C_k : 0 \leq k < |\cR|\} \subseteq \cF$ with $\dist(\gamma) = \dist(\cR)$ and
\begin{equation} \label{eqn:fake6}
\frac{1}{|T|} \cdot \sH(\gamma^T) > \sH(\gamma) - \frac{\epsilon}{2}.
\end{equation}

By construction $\rh_G(Y, \nu) < s \kappa \cdot \sH(\cP)$. So by applying Theorem \ref{thm:relbasic} to $(Y, \nu)$ (and invoking Lemma \ref{LEM RFACT}) we obtain a set $Z_0 \in \cF$ with $\mu(Z_0) = s \kappa$ and a partition $\alpha^0 = \{A_i^0 : 0 \leq i < |\cP|\} \subseteq \cF$ of $Z_0$ with $\cF \subseteq \ralg_G(\alpha^0)$ and
\begin{equation} \label{eqn:fake2}
\mu(A_i^0) = s \kappa \cdot \lambda(P_i) = \mu(Z_0) \cdot \lambda(P_i)
\end{equation}
for every $0 \leq i < |\cP|$. Note that
$$\mu(Z_0 \cap C_k) \leq \mu(Z_0) = s \kappa \leq \kappa \cdot \lambda(R_k) = \kappa \cdot \mu(C_k)$$
for all $0 \leq k < |\cR|$ since $\dist(\gamma) = \dist(\cR)$. Since $(Y, \nu)$ is non-atomic and $\{Z_0\} \cup \gamma \subseteq \cF$, it follows from the above inequality that there exists $Z_1 \in \cF$ such that $Z_1 \cap Z_0 = \varnothing$, $\mu(Z_1) = 1 - \kappa$, and $\mu(Z_1 \cap C) = (1 - \kappa) \cdot \mu(C)$ for every $C \in \gamma$.

Consider the collection $\gamma \res Z_1$ of pairwise disjoint sets. For each $C_k \cap Z_1 \in \gamma \res Z_1$ define the probability vector $\pv^{C_k \cap Z_1} = \dist_{R_k}(\cP)$. We have
\begin{align*}
\rh_G(X, \mu \given \cF) & \leq \rh_G(X, \mu)\\
 & < (1 - \kappa) \cdot \sH(\cP \given \cR)\\
 & = \sum_{0 \leq k < |\cR|} (1 - \kappa) \lambda(R_k) \cdot \sH_{R_k}(\cP)\\
 & = \sum_{0 \leq k < |\cR|} \mu(C_k \cap Z_1) \cdot \sH(\pv^{C_k \cap Z_1}).
\end{align*}
So by Lemma \ref{lem:superrel}, there is a partition $\alpha^1 = \{A_i^1 : 0 \leq i < |\cP|\}$ of $Z_1$ with
\begin{equation} \label{eqn:fake9}
\mu(A_i^1 \cap C_k \cap Z_1) = \frac{\lambda(R_k \cap P_i)}{\lambda(R_k)} \cdot \mu(C_k \cap Z_1) = (1 - \kappa) \cdot \lambda(R_k \cap P_i)
\end{equation}
for every $i$ and $k$ and with $\salg_G(\alpha') \vee \cF = \Borel(X)$ for all partitions $\alpha'$ extending $\alpha^1$. Note that
\begin{equation} \label{eqn:fake3}
\mu(A_i^1) = (1 - \kappa) \cdot \lambda(P_i) = \mu(Z_1) \cdot \lambda(P_i)
\end{equation}
for every $i$.

Set $Z_2 = X \setminus (Z_0 \cup Z_1)$. Pick any partition $\alpha^2 = \{A_i^2 : 0 \leq i < |\cP|\}$ of $Z_2$ with
\begin{equation} \label{eqn:fake4}
\mu(A_i^2) = \lambda(P_i) \cdot \mu(Z_2)
\end{equation}
for every $i$. Set $\alpha = \{A_i : 0 \leq i < |\cP|\}$ where $A_i = A_i^0 \cup A_i^1 \cup A_i^2$. Then $\mu(A_i) = \lambda(P_i)$ for every $i$ by (\ref{eqn:fake2}), (\ref{eqn:fake3}), and (\ref{eqn:fake4}). Additionally, $\alpha$ extends $\alpha^0$ and thus $\cF \subseteq \salg_G(\alpha)$ by Lemma \ref{LEM EXT}. Similarly, $\alpha$ extends $\alpha^1$ so
$$\Borel(X) = \salg_G(\alpha) \vee \cF = \salg_G(\alpha).$$
Thus $\alpha$ is generating.

In order to check that $\rh_{G,\mu}(\beta) < \epsilon$, where $\beta$ is the coarsening of $\alpha$ corresponding to $\cQ \leq \cP$, we will temporarily work with a perturbation $\alpha^*$ of $\alpha$. By (\ref{eqn:fake9}), the partition $\alpha \vee \gamma$ almost has the same distribution as $\cP \vee \cR$. We perturb $\alpha$ so that the joint distribution with $\gamma$ will be precisely the distribution of $\cP \vee \cR$. Using (\ref{eqn:fake9}), we may pick a partition $\alpha^* = \{A_i^* : 0 \leq i < |\cP|\}$ extending $\alpha^1$ and satisfying $\mu(A_i^* \cap C_k) = \lambda(P_i \cap R_k)$ for all $0 \leq i < |\cP|$ and $0 \leq k < |\cR|$. Then $\dist(\alpha) = \dist(\alpha^*) = \dist(\cP)$ and $\dB_\mu(\alpha, \alpha^*) \leq \mu(Z_0 \cup Z_2) = \kappa$. It follows from the definition of $\kappa$ that $\dR_\mu(\alpha, \alpha^*) < \epsilon / 8$ and thus by (\ref{eqn:fake1})
\begin{align}
\sH(\alpha \given \gamma) & < \sH(\alpha^* \given \gamma) + \epsilon / 8\nonumber\\
 & = \sH(\cP \given \cR) + \epsilon / 8\nonumber\\
 & < \rh_G(X, \mu \given \cF) + \epsilon / 4\nonumber\\
 & \leq \sH(\alpha \given \cF) + \epsilon / 4.\label{eqn:fake5}
\end{align}
Let $\beta$ and $\beta^*$ be the coarsenings of $\alpha$ and $\alpha^*$, respectively, corresponding to the coarsening $\cQ$ of $\cP$. Since $\mu(A_i^* \cap C_k) = \lambda(P_i \cap R_k)$ for all $i$ and $k$, there is an isomorphism $(X, \mu) \rightarrow ([0, 1], \lambda)$ of measure spaces which identifies $\alpha^*$ with $\cP$ and $\gamma$ with $\cR$. Since $\cQ$ is coarser than $\cR$, it follows that $\beta^*$ is coarser than $\gamma$. So $\beta^* \subseteq \cF$ and hence $\rh_{G, \mu}(\beta^*) \leq \rh_G(Y, \nu) < \epsilon / 8$. Additionally, $\dB_\mu(\alpha, \alpha^*) \leq \kappa$ implies $\dB_\mu(\beta, \beta^*) \leq \kappa$ and thus $\dR_\mu(\beta, \beta^*) < \epsilon / 8$. It follows that $\sH(\beta \given \beta^*) < \epsilon / 8$ and hence $\rh_{G, \mu}(\beta) < \epsilon / 4 < \epsilon$ as required.

Finally, we check that $\sH(\alpha^T) / |T| > \sH(\alpha) - \epsilon$. Using (\ref{eqn:fake1}) and the fact that $Z_0, Z_1, Z_2 \in \cF$, we have
\begin{align*}
\sH(\alpha \given \cF) & = \mu(Z_0 \cup Z_2) \cdot \sH_{Z_0 \cup Z_2}(\alpha \given \cF) + \mu(Z_1) \cdot \sH_{Z_1}(\alpha \given \cF)\\
 & \leq \mu(Z_0 \cup Z_2) \cdot \sH_{Z_0 \cup Z_2}(\alpha) + \sH_{Z_1}(\alpha \given \gamma)\\
 & = \kappa \cdot \sH(\cP) + \sH(\cP \given \cR)\\
 & < \frac{\epsilon}{256 \cdot |T|^3} + \rh_G(X, \mu \given \cF) + \frac{\epsilon}{128 \cdot |T|^3}\\
 & < \rh_G(X, \mu \given \cF) + \frac{\epsilon}{64 \cdot |T|^3}
\end{align*}
Applying Theorem \ref{thm:drop}, we conclude that
$$\frac{1}{|T|} \cdot \sH(\alpha^T \given \gamma^T) \geq \frac{1}{|T|} \cdot \sH(\alpha^T \given \cF) \geq \sH(\alpha \given \cF) - \frac{\epsilon}{4}.$$
From the above inequality and (\ref{eqn:fake5}) we obtain
\begin{equation} \label{eqn:fake7}
\frac{1}{|T|} \cdot \sH(\alpha^T \given \gamma^T) > \sH(\alpha \given \gamma) - \frac{\epsilon}{2}.
\end{equation}
Also, we observe that
\begin{equation} \label{eqn:fake8}
\sH(\gamma^T \given \alpha^T) \leq \sum_{t \in T} \sH(t \cdot \gamma \given \alpha^T) \leq \sum_{t \in T} \sH(t \cdot \gamma \given t \cdot \alpha) = |T| \cdot \sH(\gamma \given \alpha).
\end{equation}
Therefore, using (\ref{eqn:fake6}), (\ref{eqn:fake7}), and (\ref{eqn:fake8}), we have
\begin{align*}
\frac{1}{|T|} \cdot \sH(\alpha^T) & = \frac{1}{|T|} \cdot \sH(\alpha^T \vee \gamma^T) - \frac{1}{|T|} \cdot \sH(\gamma^T \given \alpha^T)\\
 & = \frac{1}{|T|} \cdot \sH(\gamma^T) + \frac{1}{|T|} \cdot \sH(\alpha^T \given \gamma^T) - \frac{1}{|T|} \cdot \sH(\gamma^T \given \alpha^T)\\
 & > \sH(\gamma) - \epsilon / 2 + \sH(\alpha \given \gamma) - \epsilon / 2 - \sH(\gamma \given \alpha)\\
 & = \sH(\alpha \vee \gamma) - \epsilon - \sH(\gamma \given \alpha)\\
 & = \sH(\alpha) - \epsilon.
\end{align*}

To complete the proof, we consider the case where $\cP$ is countably infinite. By Lemma \ref{lem:shan}, there is a finite $\cQ_0 \leq \cQ$ so that $\sH(\cQ \given \cQ_0) < \epsilon / 2$. Note that $\rh_G(X, \mu) < \sH(\cP \given \cQ) \leq \sH(\cP \given \cQ_0)$. Now choose a finite $\cP_0 \leq \cP$ such that $\cQ_0 \leq \cP_0$, $\sH(\cP \given \cP_0) < \epsilon / 2$, and $\rh_G(X, \mu) < \sH(\cP_0 \given \cQ_0)$. Apply the above argument to get a generating partition $\alpha_0$ with $\dist(\alpha_0) = \dist(\cP_0)$, $\sH(\alpha_0^T) / |T| > \sH(\alpha_0) - \epsilon / 2$, and $\rh_{G, \mu}(\beta_0) < \epsilon / 2$, where $\beta_0$ is the coarsening of $\alpha_0$ corresponding to $\cQ_0$. Since $(X, \mu)$ is non-atomic, we may choose $\alpha \geq \alpha_0$ with $\dist(\alpha) = \cP$. Clearly $\alpha$ is still generating. Since $\sH(\alpha \given \alpha_0) = \sH(\cP \given \cP_0) < \epsilon / 2$, we have
$$\frac{1}{|T|} \cdot \sH(\alpha^T) \geq \frac{1}{|T|} \cdot \sH(\alpha_0^T) > \sH(\alpha_0) - \epsilon / 2 > \sH(\alpha) - \epsilon.$$
Finally, if $\beta$ is the coarsening of $\alpha$ corresponding to $\cQ$ then $\sH(\beta \given \beta_0) = \sH(\cQ \given \cQ_0) < \epsilon / 2$ and hence $\rh_{G, \mu}(\beta) < \rh_{G, \mu}(\beta_0) + \epsilon / 2 < \epsilon$.
\end{proof}

\section{Rokhlin entropy of Bernoulli shifts: Finite case} \label{sec:fin}

In this section we study the Rokhlin entropy of $(L^G, \lambda^G)$ when $\sH(L, \lambda) < \infty$. We first restate Theorem \ref{thm:fake} in terms of isomorphisms.

\begin{cor} \label{cor:iso}
Let $G$ be a countably infinite group and let $G \acts (X, \mu)$ be a free {\pmp} ergodic action. Let $(L, \lambda)$ be a probability space with $L$ finite. Let $\cL$ be the canonical partition of $L^G$, and let $\cK$ be a partition coarser than $\cL$. If $\rh_G(X, \mu) < \sH(\cL \given \cK)$, then for every open neighborhood $U \subseteq \E_G(L^G)$ of $\lambda^G$ and every $\epsilon > 0$, there is a $G$-equivariant isomorphism $\phi : (X, \mu) \rightarrow (L^G, \nu)$ with $\nu \in U$ and $\rh_{G, \nu}(\cK) < \epsilon$.
\end{cor}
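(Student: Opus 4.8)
The plan is to apply Theorem \ref{thm:fake} and then convert the resulting approximately-Bernoulli generating partition into an isomorphism by symbolic coding, choosing the finite set $T$ in Theorem \ref{thm:fake} large enough and the error small enough to land inside the prescribed neighborhood $U$. Since $L$ is finite, I would first fix ordered finite partitions $\cQ \leq \cP$ of $([0,1], \lambda)$ with $\dist(\cP) = \dist(\cL)$ and with $\cQ$ realizing the same grouping of indices as $\cK \leq \cL$. Then $\sH(\cP) = \sH(L, \lambda) < \infty$ and $\sH(\cP \given \cQ) = \sH(\cL \given \cK)$, so the hypothesis $\rh_G(X, \mu) < \sH(\cL \given \cK)$ is exactly the hypothesis $\rh_G(X,\mu) < \sH(\cP \given \cQ)$ of Theorem \ref{thm:fake}.

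Before invoking the theorem I would encode the neighborhood $U$ into finite data. As $L^G$ is compact and all cylinders are clopen, the evaluation maps $\nu \mapsto \nu(D)$ are weak$^*$-continuous, so there exist a finite $T \subseteq G$ (which I take to contain $1_G$) and $\delta > 0$ such that every invariant $\nu \in \E_G(L^G)$ whose restriction to the $\sigma$-algebra generated by the coordinates in $T$ is within total variation $\delta$ of the corresponding restriction of $\lambda^G$ belongs to $U$. I then apply Theorem \ref{thm:fake} with this $T$ and with error $\epsilon' = \min\bigl(\epsilon,\ 2\delta^2/|T|\bigr)$, producing an ordered generating partition $\alpha = \{A_\ell : \ell \in L\}$ of $X$ with $\dist(\alpha) = \lambda$, with $\tfrac{1}{|T|}\sH(\alpha^T) > \sH(\alpha) - \epsilon'$, and with $\rh_{G,X}(\beta) < \epsilon'$, where $\beta$ is the coarsening of $\alpha$ matching $\cQ \leq \cP$.

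Next I would form the symbolic coding $\phi : X \to L^G$ defined by $\phi(x)(g) = \ell \iff g^{-1} \cdot x \in A_\ell$. A direct check using $(g \cdot x)(h) = x(g^{-1}h)$ shows $\phi$ is $G$-equivariant, and because $\alpha$ is generating, $\phi$ is an isomorphism of $(X,\mu)$ onto $(L^G, \nu)$, where $\nu = \phi_* \mu$ is invariant and ergodic. By construction $\phi^{-1}(\cL) = \alpha$ and hence $\phi^{-1}(\cK) = \beta$, so outer Rokhlin entropy transports across the isomorphism to give $\rh_{G,(L^G,\nu)}(\cK) = \rh_{G,X}(\beta) < \epsilon' \leq \epsilon$, which is one of the two required conclusions. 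For any cylinder $D$ over coordinates in $T$ the preimage $\phi^{-1}(D)$ is a union of atoms of $\alpha^T$, so $\nu(D)$ and $\lambda^G(D)$ are the corresponding masses of $\dist(\alpha^T)$ and of the product measure $\lambda^{\otimes T}$.

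The hard part is the remaining claim $\nu \in U$, i.e.\ converting the entropy gap into weak$^*$-proximity. Here I would use the total-correlation identity, which expresses the nonnegative quantity $\sum_{t \in T}\sH(t \cdot \alpha) - \sH(\alpha^T) = |T|\sH(\alpha) - \sH(\alpha^T)$ as the information divergence of $\dist(\alpha^T)$ from $\lambda^{\otimes T}$ (this is the quantitative form of the independence fact cited before Theorem \ref{intro:bkrieger}, see \cite[Fact 3.1.3]{Do11}). Since this divergence is less than $|T|\epsilon'$, Pinsker's inequality bounds the total variation distance between $\dist(\alpha^T)$ and $\lambda^{\otimes T}$ by $\sqrt{|T|\epsilon'/2} \leq \delta$. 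As total variation controls the difference of masses of every set in the $T$-algebra simultaneously, this places $\nu$ within $\delta$ of $\lambda^G$ on the $T$-coordinates, whence $\nu \in U$. The only subtlety to watch is the uniformity of this bound across all cylinders resolving $U$, which is precisely why it matters that Theorem \ref{thm:fake} furnishes approximate independence on a single finite $T$ chosen after $U$.
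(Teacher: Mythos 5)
Your argument is correct and shares its skeleton with the paper's proof: reduce the weak$^*$-neighborhood $U$ to finitely many cylinder conditions over a single finite set $T$, apply Theorem \ref{thm:fake} with that $T$ and a suitably small error, and let the resulting generating partition induce the coding $\phi$, the pushforward measure $\nu$, and the identification $\rh_{G,(L^G,\nu)}(\cK) = \rh_{G,X}(\beta)$. The one genuinely different step is the conversion of the entropy deficit $|T|\cdot\sH(\alpha) - \sH(\alpha^T)$ into proximity of $\nu$ to $\lambda^G$ on the $T$-cylinders. The paper enumerates $T = \{t_1,\dots,t_m\}$, writes each cylinder mass $\nu(D)$ as a telescoping product of conditional probabilities, notes that the deficit equals $\sum_i \big(\sH(\alpha) - \sH(t_i\cdot\alpha \given \alpha^{T_i})\big)$ with nonnegative summands, and then appeals to \cite[Fact 3.1.3]{Do11} together with uniform continuity of $m$-fold multiplication to extract a non-explicit threshold $\kappa_4$. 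You instead identify the deficit with the Kullback--Leibler divergence of $\dist(\alpha^T)$ from the product of its marginals --- which is exactly $\lambda^{\otimes T}$ because Theorem \ref{thm:fake} gives $\mu(A_\ell) = \lambda^G(R_\ell)$ on the nose and $\mu$ is invariant --- and then apply Pinsker's inequality, yielding the explicit admissible error $\epsilon' = \min(\epsilon,\, 2\delta^2/|T|)$. Both routes are sound; yours is shorter and fully quantitative (the threshold is an explicit function of $\delta$ and $|T|$), while the paper's is softer but avoids invoking Pinsker. The remaining details you supply --- the reduction of $U$ to a total-variation condition on the $T$-algebra, the strictness of the resulting inequality, and the transport of outer Rokhlin entropy across the isomorphism --- all check out.
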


\begin{proof}
By definition, $\cL = \{R_\ell : \ell \in L\}$ where
$$R_\ell = \{y \in L^G : y(1_G) = \ell\}.$$
Since $U$ is open, there are continuous functions $f_1, \ldots, f_n$ on $L^G$ and $\kappa_1 > 0$ such that for all $\nu \in \E_G(L^G)$
$$\Big| \textstyle{\int f_i \ d \lambda^G} - \textstyle{\int f_i \ d \nu} \Big| < \kappa_1 \text{ for all } 1 \leq i \leq n \Longrightarrow \nu \in U.$$
Since $L^G$ is compact, each $f_i$ is uniformly continuous and therefore there is a finite $T \subseteq G$ and continuous $\cL^T$-measurable functions $f_i'$ such that $\|f_i - f_i'\| < \kappa_1 / 2$ for each $1 \leq i \leq n$, where $\| \cdot \|$ denotes the sup-norm. Therefore there is $\kappa_2 > 0$ such that for all $\nu \in \E_G(L^G)$
$$\Big| \lambda^G(D) - \nu(D) \Big| < \kappa_2 \text{ for all } D \in \cL^T \Longrightarrow \nu \in U.$$
By viewing the restriction $\nu \res \cL^T$ as a $|\cL^T|$-tuple of real numbers from $[0, 1]$, we see that the quantity $|T| \cdot \sH_\nu(\cL) - \sH_\nu(\cL^T)$ is a continuous non-negative function of $\nu \res \cL^T$, and it is equal to $0$ if and only if the partitions $t \cdot \cL$, $t \in T$, are mutually $\nu$-independent. By compactness of $[0,1]^{|\cL^T|}$ and by $G$-invariance of $\nu$, it follows that there is $\kappa_3 > 0$ such that
$$\nu(R_\ell) = \lambda^G(R_\ell) \text{ for all } \ell \in L \text{ and } |T| \cdot \sH_\nu(\cL) - \sH_\nu(\cL^T) < \kappa_3 \Longrightarrow \nu \in U.$$

Now apply Theorem \ref{thm:fake} to obtain a generating partition $\alpha = \{A_\ell : \ell \in L\}$ of $X$ satisfying $\mu(A_\ell) = \lambda^G(R_\ell)$ for every $\ell \in L$, $\sH(\alpha^T) > |T| \cdot \sH(\alpha) - \kappa_3$, and $\rh_{G, \mu}(\beta) < \epsilon$, where $\beta$ is the coarsening of $\alpha$ corresponding to $\cK$. Since $\alpha$ is generating and its classes are indexed by $L$, it induces a $G$-equivariant isomorphism $\phi : (X, \mu) \rightarrow (L^G, \nu)$ which identifies $\alpha$ with $\cL$ and $\beta$ with $\cK$. We immediately have $\nu(R_\ell) = \mu(A_\ell) = \lambda^G(R_\ell)$ for every $\ell \in L$ and
$$|T| \cdot \sH_\nu(\cL) - \sH_\nu(\cL^T) = |T| \cdot \sH_\mu(\alpha) - \sH_\mu(\alpha^T) < \kappa_3.$$
So $\nu \in U$. Additionally, $\rh_{G, \nu}(\cK) = \rh_{G, \mu}(\beta) < \epsilon$.
\end{proof}

The key idea to understanding the Rokhlin entropy of $(L^G, \lambda^G)$ is to combine the approximations provided by the previous corollary with continuity properties of Rokhlin entropy. Here we develop only those continuity properties which are essential to studying $\rh_G(L^G, \lambda^G)$. A comprehensive study of the various continuity properties of Rokhlin entropy will be presented in Part III \cite{AS}. The results in Part III will in particular cover the case of actions which are not necessarily ergodic.

Recall that a real-valued function $f$ on a topological space $X$ is called \emph{upper-semicontinuous} if for every $x \in X$ and $\epsilon > 0$ there is an open set $U$ containing $x$ with $f(y) < f(x) + \epsilon$ for all $y \in U$. When $X$ is first countable, this is equivalent to saying that $f(x) \geq \limsup f(x_n)$ whenever $(x_n)$ is a sequence converging to $x$.

\begin{lem} \label{lem:mups}
Let $G$ be a countable group, let $L$ be a finite set, and let $L^G$ have the product topology. Let $\mathcal{C}$ be a countable collection of clopen sets, and let $\cF$ be the smallest $G$-invariant $\sigma$-algebra containing $\mathcal{C}$. Then the map $\mu \in \E_G(L^G) \mapsto \rh_G(L^G, \mu \given \cF)$ is upper-semicontinuous in the weak$^*$-topology.
\end{lem}

\begin{proof}
Let $\cL = \{R_\ell : \ell \in L\}$ be the canonical generating partition for $L^G$, where $R_\ell = \{x \in L^G : x(1_G) = \ell\}$. Fix a $G$-invariant probability measure $\mu$ on $L^G$ and fix $\epsilon > 0$. Pick a partition $\alpha$ satisfying $\sH_\mu(\alpha \given \cF) < \rh_G(L^G, \mu \given \cF) + \epsilon / 4$ and $\salg_G(\alpha) \vee \cF = \Borel(L^G)$ (equality up to $\mu$-null sets). Let $\gamma$ be a finite partition which is measurable with respect to the $G$-invariant algebra generated by $\mathcal{C}$ and let $T \subseteq G$ be a finite set satisfying
$$\sH_\mu(\cL \given \alpha^T \vee \gamma) < \epsilon / 4 \quad \text{and} \quad \sH_\mu(\alpha \given \gamma) < \rh_G(L^G, \mu \given \cF) + \epsilon / 4.$$
Since $\cL$ is a generating partition, there is a finite $W \subseteq G$ and a finite coarsening $\beta \leq \cL^W$ with $\dR_\mu(\beta, \alpha) < \epsilon / (8 |T|)$. Then
$$\sH_\mu(\cL \given \beta^T \vee \gamma) < \sH_\mu(\cL \given \alpha^T \vee \gamma) + 2 |T| \cdot \dR_\mu(\alpha, \beta) < \epsilon / 2$$
and
$$\sH_\mu(\beta \given \gamma) < \sH_\mu(\alpha \given \gamma) + \dR_\mu(\alpha, \beta) < \rh_G(L^G, \mu \given \cF) + \epsilon / 2.$$
Let $U$ be the set of $G$-invariant probability measures $\nu$ satisfying $\sH_\nu(\cL \given \beta^T \vee \gamma) < \epsilon / 2$ and $\sH_\nu(\beta \given \gamma) < \rh_G(L^G, \mu \given \cF) + \epsilon / 2$. Since $\cL$, $\beta$, and $\gamma$ are finite clopen partitions, the set $U$ is open and contains $\mu$. If $\nu \in U$ then by sub-additivity
\begin{equation*}
\rh_G(L^G, \nu \given \cF) \leq \sH_\nu(\beta \given \gamma) + \sH_\nu(\cL \given \beta^T \vee \gamma) < \rh_G(L^G, \mu \given \cF) + \epsilon.\qedhere
\end{equation*}
\end{proof}

We need one more continuity property of Rokhlin entropy.

\begin{lem} \label{lem:up}
Let $G \acts (X, \mu)$ be a {\pmp} ergodic action, let $\cF$ be a $G$-invariant sub-$\sigma$-algebra, and let $\alpha$ be a countable partition. Fix an increasing sequence of partitions $\alpha_n \leq \alpha$ with $\alpha = \bigvee_{n \in \N} \alpha_n$. For each $n$ let $G \acts (Y_n, \nu_n)$ be the factor of $(X, \mu)$ associated to $\salg_G(\alpha_n) \vee \cF$. Also let $\cF_n$ be the image of $\cF$ in $Y_n$. If $\sH(\alpha) < \infty$ and $\salg_G(\alpha) \vee \cF = \Borel(X)$ then
$$\rh_G(X, \mu \given \cF) = \lim_{n \rightarrow \infty} \rh_G(Y_n, \nu_n \given \cF_n).$$
\end{lem}

\begin{proof}
By sub-additivity, for every $n \in \N$ we have
$$\rh_G(X, \mu \given \cF) \leq \rh_G(Y_n, \nu_n \given \cF_n) + \sH(\alpha \given \alpha_n).$$
Since $\sH(\alpha \given \alpha_n)$ converges to $0$, we conclude $\rh_G(X, \mu \given \cF) \leq \liminf_{n \rightarrow \infty} \rh_G(Y_n, \nu_n \given \cF_n)$. Now fix $\epsilon > 0$ and let $\cP$ be a partition of $X$ satisfying $\sH(\cP \given \cF) < \rh_G(X, \mu \given \cF) + \epsilon / 6$ and $\salg_G(\cP) \vee \cF = \Borel(X)$. Pick a finite set $T \subseteq G$ with $\sH(\alpha \given \cP^T \vee \cF) < \epsilon / 6$. Let $\gamma' \subseteq \cF$ be a finite partition with $\sH(\cP \given \gamma') < \rh_G(X, \mu \given \cF) + \epsilon / 6$ and $\sH(\alpha \given \cP^T \vee \gamma') < \epsilon / 6$. Since $\salg_G(\alpha) \vee \cF = \Borel(X)$, we can find a finite partition $\gamma'' \subseteq \cF$, a finite $W \subseteq G$, and a coarsening $\cQ \leq \alpha^W \vee \gamma''$ such that $\dR_\mu(\cQ, \cP) < \epsilon / (24 |T|)$. Set $\gamma = \gamma' \vee \gamma''$. Let $n \in \N$ be sufficiently large so that $\sH(\alpha \given \alpha_n) < \epsilon / 6$ and so that there is a partition $\cQ_n \leq \alpha_n^W \vee \gamma$ with $\dR_\mu(\cQ_n, \cQ) < \epsilon / (24 |T|)$. Then $\dR_\mu(\alpha_n, \alpha) < \epsilon / 6$ and $\dR_\mu(\cQ_n, \cP) < \epsilon / (12 |T|)$. Therefore
$$\sH(\alpha_n \given \cQ_n^T \vee \gamma) < \sH(\alpha \given \cP^T \vee \gamma) + \dR_\mu(\alpha, \alpha_n) + 2 |T| \cdot \dR_\mu(\cQ_n, \cP) < \epsilon / 2$$
and
$$\sH(\cQ_n \given \gamma) < \sH(\cP \given \gamma) + \dR_\mu(\cQ_n, \cP) < \rh_G(X, \mu \given \cF) + \epsilon / 2.$$
So by sub-additivity
$$\rh_G(Y_n, \nu_n \given \cF_n) \leq \sH(\cQ_n \given \gamma) + \sH(\alpha_n \given \cQ_n^T \vee \gamma) < \rh_G(X, \mu \given \cF) + \epsilon.$$
This holds for all sufficiently large $n$ and all $\epsilon > 0$, completing the proof.
\end{proof}

Fix a countably infinite group $G$. Recall from the introduction the quantity
$$\suprh{G} = \sup_{G \acts (X, \mu)} \rh_G(X, \mu),$$
where the supremum is taken over all free ergodic {\pmp} actions $G \acts (X, \mu)$ with $\rh_G(X, \mu) < \infty$. If there is a free ergodic {\pmp} action $G \acts (X, \mu)$ with $\rh_G(X, \mu) = \infty$, we do not know if it necessarily follows that $\suprh{G} = \infty$. In particular, we do not know if $G \acts (X, \mu)$ must factor onto free actions having large but finite Rokhlin entropy values. However, we have the following.

\begin{lem} \label{lem:dfactor}
Let $G$ be a countably infinite group and let $G \acts (X, \mu)$ be a free {\pmp} ergodic action. If $\rh_G(X, \mu) < \infty$ then for every $0 \leq t \leq \rh_G(X, \mu)$ and $\delta > 0$ there is a factor $G \acts (Y, \nu)$ of $(X, \mu)$ such that $G$ acts freely on $Y$ and $\rh_G(Y, \nu) \in (t - \delta, t + \delta)$.
\end{lem}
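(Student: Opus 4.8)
The plan is to produce the required factor as an \emph{intermediate} factor lying over a fixed small free base, and to locate the correct one by a first-crossing argument in place of an intermediate value theorem (which is unavailable here, since Rokhlin entropy is only upper-semicontinuous and is not known to be monotone under factors). Write $R = \rh_G(X,\mu) < \infty$; since the action is free and $G$ is infinite, $(X,\mu)$ is non-atomic. If $t > R-\delta$ we simply take $(Y,\nu)=(X,\mu)$, a free factor with $\rh_G(X,\mu)=R\in(t-\delta,t+\delta)$. So assume $t\le R-\delta$. By Theorem \ref{thm:robin} fix a factor $G\acts(Z,\eta)$ with $G$ acting freely and $\rh_G(Z,\eta)<\delta/4$, and let $\Sigma$ be the associated $G$-invariant sub-$\sigma$-algebra of $X$. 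Any factor whose sub-$\sigma$-algebra contains $\Sigma$ automatically has $G$ acting freely, since it factors onto $Z$.

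Next I would fix a countable generating partition $\alpha$ with $\sH(\alpha)<\infty$ (available as $R<\infty$) and build an increasing chain $\alpha_0\le\alpha_1\le\cdots$ of coarsenings with $\alpha_0$ trivial, $\bigvee_n\alpha_n=\alpha$, and $\sH(\alpha_{n+1}\given\alpha_n)<\delta/4$ for every $n$; such a chain exists by revealing the refinement gradually on the non-atomic space $X$ and using continuity of conditional Shannon entropy (the interpolation in Lemma \ref{lem:upmap} is the prototype). Let $Y_n$ be the factor associated to $\mathcal{G}_n:=\salg_G(\alpha_n)\vee\Sigma$, let $\Sigma_n$ be the image of $\Sigma$ in $Y_n$, and set $h_n:=\rh_G(Y_n,\nu_n\given\Sigma_n)$. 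Each $Y_n$ is non-atomic and ergodic (it contains $Z$), $h_0=0$ (since $Y_0=Z$ and $\Sigma_0=\Borel(Z)$), and $h_n\le\sH(\alpha_n)<\infty$. As $\alpha$ is generating, Corollary \ref{cor:up} gives $h_n\to\rh_G(X,\mu\given\Sigma)$, and Corollary \ref{cor:add} gives $R\le\rh_G(Z,\eta)+\rh_G(X,\mu\given\Sigma)$, so $\lim_n h_n=\rh_G(X,\mu\given\Sigma)>R-\delta/4>t$.

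The quantitative heart is an upper bound on the \emph{upward} jumps of $(h_n)$. Let $\Psi$ be the sub-$\sigma$-algebra of $Y_{n+1}$ corresponding to the intermediate factor $Y_n$, so $\Sigma_{n+1}\subseteq\Psi$. Applying the relative additivity of Corollary \ref{cor:add2} inside $Y_{n+1}$ with base $\Sigma_{n+1}$ and with a countable $\mathcal{C}$ generating $\Psi$ over $\Sigma_{n+1}$, together with the facts that $\alpha_{n+1}$ generates $Y_{n+1}$ over $Y_n$ and that any generator of $Y_n$ over $Z$ witnesses the outer entropy of $\Psi$, I obtain
\begin{equation*}
h_{n+1}\le\rh_{G,Y_{n+1}}(\Psi\given\Sigma_{n+1})+\rh_G(Y_{n+1},\nu_{n+1}\given\Psi)\le h_n+\sH(\alpha_{n+1}\given\alpha_n)<h_n+\tfrac{\delta}{4}.
\end{equation*}
Since $h_0=0\le t<\lim_n h_n$, let $N$ be least with $h_N>t$; then $N\ge1$, $h_{N-1}\le t$, and the jump bound forces $h_N\in(t,\,t+\delta/4]$. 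I would then take $(Y,\nu)=(Y_N,\nu_N)$: freeness holds as $\Sigma\subseteq\mathcal{G}_N$; conditioning gives $\rh_G(Y_N,\nu_N)\ge h_N>t>t-\delta$; and Corollary \ref{cor:add} with the sub-factor $Z$ gives $\rh_G(Y_N,\nu_N)\le\rh_G(Z,\eta)+h_N<\delta/4+t+\delta/4<t+\delta$. Hence $\rh_G(Y_N,\nu_N)\in(t-\delta,t+\delta)$, as required.

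The main obstacle is precisely the absence of two-sided control: with only upper-semicontinuity (Corollary \ref{cor:pups}) and no known monotonicity of Rokhlin entropy under factor maps, a direct intermediate value argument fails, and the first-crossing argument rescues it by requiring only that the \emph{upward} increments of $(h_n)$ be small. The technical points needing care are therefore (i) the upward-jump estimate via relative additivity and (ii) arranging the chain $(\alpha_n)$ to have small conditional-entropy steps; note that the jump estimate uses only the easy direction $\rh_{G,Y_{n+1}}(\Psi\given\Sigma_{n+1})\le\rh_G(Y_n,\nu_n\given\Sigma_n)$ of the outer-versus-intrinsic comparison, sidestepping the open question of whether that inequality can be strict.
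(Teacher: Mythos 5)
Your overall architecture is different from the paper's and is essentially viable, but there is one genuine gap: the construction of the chain $(\alpha_n)$. You ask for an \emph{increasing chain of coarsenings} of a fixed finite-entropy generating partition $\alpha$, starting at the trivial partition, with $\bigvee_n\alpha_n=\alpha$ and $\sH(\alpha_{n+1}\given\alpha_n)<\delta/4$, and you justify its existence by ``revealing the refinement gradually,'' citing Lemma \ref{lem:upmap} as the prototype. This does not work: the interpolating partitions $\cQ_t$ in Lemma \ref{lem:upmap} are \emph{not} coarsenings of $\cP$, whereas your argument genuinely needs coarsenings (an increasing sequence with $\sH(\alpha_n\given\alpha)\to0$ forces $\sH(\alpha_n\given\alpha)=0$ for all $n$, and Corollary \ref{cor:up} and your jump estimate both use $\salg_G(\alpha_n)\subseteq\salg_G(\alpha_{n+1})\subseteq\salg_G(\alpha)$). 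The coarsenings of a countable partition form a combinatorially rigid family, and the required chain simply need not exist: if $\alpha=\{A,X\setminus A\}$ with $\mu(A)=1/2$, the only consecutive conditional entropies available are $0$ and $\log 2$. The gap is repairable, but only by choosing $\alpha$ judiciously: take probability vectors $\qv_1,\dots,\qv_k$ with each $\sH(\qv_i)<\delta/4$ and $\rh_G(X,\mu)<\sum_i\sH(\qv_i)<\infty$, let $\rv$ be their independent join, and invoke Theorem \ref{intro:krieger} to get a generating partition $\gamma$ with $\dist(\gamma)=\rv$; the coordinate coarsenings $\alpha_n$ then satisfy $\sH(\alpha_{n+1}\given\alpha_n)=\sH(\qv_{n+1})<\delta/4$, and the chain is finite and terminates at a generating partition (which also lets you dispense with Corollary \ref{cor:up}). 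The rest of your argument checks out: the upward-jump bound $h_{n+1}\le h_n+\sH(\alpha_{n+1}\given\alpha_n)$ via Corollary \ref{cor:add2} is correct (and you are right that only the easy direction of the outer-versus-intrinsic comparison is needed), the endpoints $h_0=0$ and $\lim_n h_n>t$ are correct, and the final two-sided bounds on $\rh_G(Y_N,\nu_N)$ via Corollary \ref{cor:add} are correct; non-atomicity of each $Y_n$ (needed for Corollaries \ref{cor:add} and \ref{cor:add2}) holds because each $Y_n$ factors onto the free action $Z$.

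It is worth noting that once you make this repair you have landed next to the paper's own proof, which is shorter and avoids the first-crossing machinery entirely: it applies Theorem \ref{intro:krieger} with a \emph{two-fold} product distribution $\rv=\pv\times\qv$, $\sH(\pv)=t$ and $\sH(\qv)\in(\rh_G(X,\mu)-t,\,\rh_G(X,\mu)-t+\delta)$, takes $Y$ to be the factor generated by the $\pv$-coarsening together with a small free factor from Theorem \ref{thm:robin}, and gets the lower bound $\rh_G(Y,\nu)>t-\delta$ in one stroke from Corollary \ref{cor:add}, since the complementary coarsening $\beta$ generates $X$ over $Y$ and has $\sH(\beta)=\sH(\qv)<\rh_G(X,\mu)-t+\delta$. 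Your first-crossing scheme buys a lower bound without needing to control the entropy of a complementary generator, which is a genuinely different mechanism; but the price is exactly the interpolating chain, whose existence already requires the full strength of Theorem \ref{intro:krieger}.
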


\begin{proof}
Let $\pv$ be a probability vector with $\sH(\pv) = t$, and let $\qv$ be a probability vector with $\rh_G(X, \mu) - t < \sH(\qv) < \rh_G(X, \mu) - t + \delta$. Let $\rv$ be the probability vector which represents the independent join of $\pv$ and $\qv$. Specifically, $\rv = (r_{i,j})$ where $r_{i,j} = p_i \cdot q_j$. We have $\sH(\rv) = \sH(\pv) + \sH(\qv)$ so $\rh_G(X, \mu) < \sH(\rv)$. By Theorem \ref{intro:krieger} there is a generating partition $\gamma = \{C_{i,j}\}$ with $\mu(C_{i,j}) = r_{i,j}$. Let $\alpha = \{A_i \: 0 \leq i < |\pv|\}$ be the coarsening of $\gamma$ associated to $\pv$, meaning
$$A_i = \cup \{C_{i, j} \: 0 \leq j < |\qv|\}.$$
Similarly define $\beta = \{B_j \: 0 \leq j < |\qv|\}$ by
$$B_j = \cup \{C_{i, j} \: 0 \leq i < |\pv|\}.$$
Then $\dist(\alpha) = \pv$, $\dist(\beta) = \qv$, and $\alpha \vee \beta = \gamma$.

By Theorem \ref{thm:robin}, there is a free factor $G \acts (Z, \eta)$ of $(X, \mu)$ with $\rh_G(Z, \eta) < \delta$. Let $\zeta'$ be a generating partition for $Z$ with $\sH(\zeta') < \delta$, and let $\zeta$ be the pre-image of $\zeta'$ in $X$. Let $G \acts (Y, \nu)$ be the factor of $(X, \mu)$ associated to $\salg_G(\alpha \vee \zeta)$. Clearly $\alpha \vee \zeta$ pushes forward to a generating partition $\alpha' \vee \zeta''$ of $Y$ with $\sH(\alpha') = \sH(\pv)$ and $\sH(\zeta'') < \delta$. So $\rh_G(Y, \nu) \leq \sH(\alpha' \vee \zeta'') < t + \delta$. By sub-additivity we also have
$$\rh_G(Y, \nu) \geq \rh_G(X, \mu) - \rh_G(X, \mu \given \salg_G(\alpha \vee \zeta)) \geq \rh_G(X, \mu) - \sH(\beta) > t - \delta.$$
Finally, $G \acts (Y, \nu)$ must be a free action since it factors onto $(Z, \eta)$.
\end{proof}

We will now consider the Rokhlin entropy of Bernoulli shifts $(L^G, \lambda^G)$ where $\sH(L, \lambda) < \infty$. Let $\cL$ be the canonical partition of $L^G$. If $\cK$ is a partition coarser than $\cL$, then the translates of $\cK$ are mutually independent and the factor associated to $\salg_G(\cK)$ is a Bernoulli shift $G \acts (K^G, \kappa^G)$. In order to emphasize the fact that $\salg_G(\cK)$ corresponds to a Bernoulli factor of $(L^G, \lambda^G)$, we will write $\cK^G$ for $\salg_G(\cK)$.

\begin{prop} \label{prop:finite}
Let $G$ be a countably infinite group and let $(L, \lambda)$ be a probability space with $L$ finite. Let $\cL$ be the canonical partition of $L^G$ and let $\cK$ be a partition coarser than $\cL$. Then
$$\rh_G \big( L^G, \lambda^G \given \cK^G \big) = \min \Big( \sH(\cL \given \cK), \ \ \suprh{G} \Big).$$
\end{prop}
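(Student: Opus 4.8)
The plan is to establish the two inequalities
$$\rh_G\big(L^G,\lambda^G \given \cK^G\big) \leq \min\big(\sH(\cL\given\cK),\ \suprh{G}\big) \leq \rh_G\big(L^G,\lambda^G \given \cK^G\big),$$
and we may assume $|L|\geq 2$, as otherwise every quantity vanishes. Write $t = \min(\sH(\cL\given\cK),\suprh{G})$.

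The upper bound splits into two pieces. Since $\cL$ is already generating we have $\salg_G(\cL)\vee\cK^G = \Borel(L^G)$, so taking $\alpha=\cL$ in the definition of relative Rokhlin entropy gives $\rh_G(L^G,\lambda^G \given \cK^G)\leq \sH(\cL\given\cK^G)$; because the translates $g\cdot\cK$ with $g\neq 1_G$ depend only on coordinates different from $1_G$ and are therefore independent of $\cL$ under $\lambda^G$, while $\cK=1_G\cdot\cK$ is coarser than $\cL$, one has $\sH(\cL\given\cK^G)=\sH(\cL\given\cK)$. For the second piece, relative Rokhlin entropy never exceeds absolute Rokhlin entropy, and $\rh_G(L^G,\lambda^G)\leq\sH(\cL)<\infty$ as $L$ is finite; since $(L^G,\lambda^G)$ is then a free ergodic action of finite Rokhlin entropy, $\rh_G(L^G,\lambda^G)\leq\suprh{G}$ by definition of the supremum. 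Hence $\rh_G(L^G,\lambda^G \given \cK^G)\leq t$.

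For the lower bound it suffices to show $\rh_G(L^G,\lambda^G\given\cK^G)\geq s$ for every $s<t$ and then let $s\uparrow t$; assume $t>0$, so that $s<\sH(\cL\given\cK)$ and $s<\suprh{G}$. Using $s<\suprh{G}$, fix a free ergodic action $G\acts(W,\omega)$ with $s<\rh_G(W,\omega)<\infty$. Since $\min(\sH(\cL\given\cK),\rh_G(W,\omega))>s$, Lemma \ref{lem:dfactor} lets me pass to a free ergodic factor $G\acts(X,\mu)$ of $(W,\omega)$ whose Rokhlin entropy lies in the window $(s,\sH(\cL\given\cK))$. The role of landing strictly below $\sH(\cL\given\cK)$ is precisely to make Corollary \ref{cor:iso} applicable to $(X,\mu)$.

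Now I would run the approximation argument. By Corollary \ref{cor:iso}, for every weak$^*$-neighborhood $U$ of $\lambda^G$ and every $\epsilon>0$ there is a $G$-equivariant isomorphism $(X,\mu)\cong(L^G,\nu)$ with $\nu\in U\cap\E_G(L^G)$ and $\rh_{G,(L^G,\nu)}(\cK)<\epsilon$. Applying the second inequality of Corollary \ref{cor:add2} with $\cF$ trivial and $\mathcal{C}=\cK$ (valid since $(L^G,\nu)$ is non-atomic) yields
$$\rh_G(X,\mu)=\rh_G(L^G,\nu)\leq \rh_{G,(L^G,\nu)}(\cK)+\rh_G(L^G,\nu\given\cK^G)<\epsilon+\rh_G(L^G,\nu\given\cK^G),$$
and since $\rh_G(X,\mu)>s$ this forces $\rh_G(L^G,\nu\given\cK^G)>s-\epsilon$. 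Taking a shrinking neighborhood basis $U_n$ of $\lambda^G$ together with $\epsilon_n\to 0$ produces ergodic measures $\nu_n\to\lambda^G$ in the weak$^*$-topology with $\rh_G(L^G,\nu_n\given\cK^G)>s-\epsilon_n$. Because $\cK$ is a finite partition into clopen sets generating $\cK^G$ as the smallest $G$-invariant $\sigma$-algebra, Corollary \ref{cor:mups} applies and $\nu\mapsto\rh_G(L^G,\nu\given\cK^G)$ is upper-semicontinuous, so
$$\rh_G(L^G,\lambda^G\given\cK^G)\geq\limsup_n\rh_G(L^G,\nu_n\given\cK^G)\geq\limsup_n(s-\epsilon_n)=s.$$
Letting $s\uparrow t$ gives the lower bound, which with the upper bound proves the proposition.

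The substance is concentrated in the two cited engines rather than in this assembly: one cannot evaluate $\rh_G(L^G,\lambda^G\given\cK^G)$ at the Bernoulli measure directly. The scheme is to realize auxiliary free ergodic actions of prescribed Rokhlin entropy as measures near $\lambda^G$ for which the distinguished Bernoulli factor $\cK$ is cheap to express (Corollary \ref{cor:iso}, resting on the approximately Bernoulli generating partitions of Theorem \ref{thm:fake}), and then to transport the lower bounds to the limit point $\lambda^G$ by upper-semicontinuity (Corollary \ref{cor:mups}). The one genuinely delicate bookkeeping step on the present level is using Lemma \ref{lem:dfactor} to force the auxiliary Rokhlin entropy into the half-open window just below $\sH(\cL\given\cK)$, which is exactly what keeps Corollary \ref{cor:iso} usable for all $s<t$ and thereby recovers the full value $\suprh{G}$ in the case $\sH(\cL\given\cK)>\suprh{G}$.
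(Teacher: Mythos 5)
Your proof is correct and rests on exactly the same ingredients as the paper's: Corollary \ref{cor:iso} to realize auxiliary free ergodic actions as ergodic measures near $\lambda^G$ for which $\cK$ has small outer Rokhlin entropy, Corollary \ref{cor:add2} to split $\rh_G(L^G,\nu)$, Corollary \ref{cor:mups} to pass to the limit $\lambda^G$, and Lemma \ref{lem:dfactor} to populate the entropy window below $\sH(\cL \given \cK)$. The only difference is organizational — the paper argues by contradiction that no free ergodic action has Rokhlin entropy strictly between $\rh_G(L^G,\lambda^G \given \cK^G)$ and $\sH(\cL \given \cK)$ and then deduces the dichotomy, whereas you prove the lower bound $\geq s$ for each $s < \min(\sH(\cL\given\cK), \suprh{G})$ directly — and the two formulations are logically equivalent.
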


\begin{proof}
We immediately have $\rh_G(L^G, \lambda^G \given \cK^G) \leq \sH(\cL \given \cK)$ since $\cL$ is a generating partition. We will show that there does not exist any free {\pmp} ergodic action $G \acts (X, \mu)$ with
$$\rh_G(L^G, \lambda^G \given \cK^G) < \rh_G(X, \mu) < \sH(\cL \given \cK).$$
From Lemma \ref{lem:dfactor} it will follow that either $\rh_G(L^G, \lambda^G \given \cK^G) = \sH(\cL \given \cK)$ or else $\rh_G(L^G, \lambda^G \given \cK^G) \geq \rh_G(X, \mu)$ for every free {\pmp} ergodic action $G \acts (X, \mu)$ with $\rh_G(X, \mu) < \infty$.

Towards a contradiction, suppose that $G \acts (X, \mu)$ is a free {\pmp} ergodic action with $\rh_G(L^G, \lambda^G \given \cK^G) < \rh_G(X, \mu) < \sH(\cL \given \cK)$. Fix $\epsilon > 0$ with
$$\rh_G(L^G, \lambda^G \given \cK^G) + \epsilon < \rh_G(X, \mu).$$
By Lemma \ref{lem:mups}, there is an open neighborhood $U \subseteq \E_G(L^G)$ of $\lambda^G$ such that $\rh_G(L^G, \nu \given \cK^G) < \rh_G(L^G, \lambda^G \given \cK^G) + \epsilon / 2$ for all $\nu \in U$. By Corollary \ref{cor:iso}, there is a $G$-equivariant isomorphism $\phi : (X, \mu) \rightarrow (L^G, \nu)$ with $\nu \in U$ and $\rh_{G, \nu}(\cK) < \epsilon / 2$. Then by sub-additivity
\begin{align*}
\rh_G(X, \mu) & = \rh_G(L^G, \nu)\\
 & \leq \rh_{G, \nu}(\cK) + \rh_G(L^G, \nu \given \cK^G)\\
 & < \rh_G(L^G, \lambda^G \given \cK^G) + \epsilon\\
 & < \rh_G(X, \mu),
\end{align*}
a contradiction.
\end{proof}

\begin{thm} \label{thm:finite}
Let $G$ be a countably infinite group and let $(L, \lambda)$ be a probability space with $\sH(L, \lambda) < \infty$. Then
$$\rh_G(L^G, \lambda^G) = \min \Big( \sH(L, \lambda), \ \ \suprh{G} \Big).$$
\end{thm}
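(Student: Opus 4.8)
The plan is to split into two cases according to whether the base space $(L,\lambda)$ is finite or countably infinite, reducing the infinite case to the finite one by a factor-approximation argument.

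First I would dispose of the case where $L$ is finite. Here the canonical partition $\cL$ is a finite generating partition of $L^G$ with $\sH(\cL) = \sH(L,\lambda)$. Applying Proposition~\ref{prop:finite} with $\cK$ taken to be the trivial one-piece partition $\{L^G\}$ makes $\cK^G$ the trivial $G$-invariant $\sigma$-algebra and $\sH(\cL \given \cK) = \sH(\cL) = \sH(L,\lambda)$, so the conclusion of that proposition reads exactly
$$\rh_G(L^G,\lambda^G) = \min\Big(\sH(L,\lambda),\ \suprh{G}\Big),$$
which is the desired identity. Thus the finite case requires no new work beyond specializing Proposition~\ref{prop:finite}.

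For the case $\sH(L,\lambda) < \infty$ with $L$ countably infinite, the idea is to approximate $(L^G,\lambda^G)$ from within by its finite Bernoulli subfactors. Since $\sH(\cL) = \sH(L,\lambda) < \infty$, the canonical partition $\cL$ is a countable generating partition of finite Shannon entropy, so I can fix an increasing sequence of finite partitions $\cL_n \leq \cL$ with $\cL = \bigvee_{n} \cL_n$. Each $\cL_n$ is a coarsening of $\cL$ corresponding to a finite partition of the base $L$; because the $G$-translates of $\cL$ are mutually independent, so are those of $\cL_n$, and hence the factor of $(L^G,\lambda^G)$ associated to $\salg_G(\cL_n)$ is itself a Bernoulli shift $(L_n^G,\lambda_n^G)$ over a finite base with $\sH(L_n,\lambda_n) = \sH(\cL_n)$. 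The finite case already established then gives $\rh_G(L_n^G,\lambda_n^G) = \min\big(\sH(\cL_n),\ \suprh{G}\big)$ for each $n$.

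It remains to pass to the limit. Applying Corollary~\ref{cor:up} with $\cF$ the trivial $\sigma$-algebra, $\alpha = \cL$, and $\alpha_n = \cL_n$ — whose hypotheses hold since $\cL$ generates and $\sH(\cL) < \infty$ — yields $\rh_G(L^G,\lambda^G) = \lim_{n} \rh_G(L_n^G,\lambda_n^G) = \lim_{n} \min\big(\sH(\cL_n),\ \suprh{G}\big)$. By Lemma~\ref{lem:shan}.(vii) the values $\sH(\cL_n)$ increase to $\sH(\cL) = \sH(L,\lambda)$, and since $t \mapsto \min(t,\suprh{G})$ is continuous and non-decreasing (this remains valid when $\suprh{G} = \infty$, where the min is simply $t$), the limit equals $\min\big(\sH(L,\lambda),\ \suprh{G}\big)$, completing the proof. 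The argument is essentially mechanical once the pieces are assembled; the only points demanding care are the correct identification of the intermediate factors $\salg_G(\cL_n)$ as genuine finite-base Bernoulli shifts, so that the finite case applies, together with the verification that Corollary~\ref{cor:up} is applicable and thereby legitimizes the interchange of the limit with the $\min$.
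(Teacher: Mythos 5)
Your proposal is correct and follows essentially the same route as the paper: approximate by the finite-base Bernoulli factors $(L_n^G,\lambda_n^G)$ associated to finite coarsenings $\cL_n$ of $\cL$, apply Proposition~\ref{prop:finite} (with $\cK$ trivial) to each, and pass to the limit via Corollary~\ref{cor:up} together with $\sH(\cL_n) \nearrow \sH(\cL)$. The only cosmetic difference is that you treat the finite-base case separately first, whereas the paper runs the single limiting argument uniformly.
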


\begin{proof}
Let $\cL = \{R_\ell : \ell \in L\}$ be the canonical partition of $L^G$ where
$$R_\ell = \{y \in L^G : y(1_G) = \ell\}.$$
Let $\cL_n$ be an increasing sequence of finite partitions which are coarser than $\cL$ and satisfy $\cL = \bigvee_{n \in \N} \cL_n$. The algebra generated by $\cL_n$ corresponds to a factor $(L_n, \lambda_n)$ of $(L, \lambda)$, and the factor of $(L^G, \lambda^G)$ corresponding to $\cL_n^G$ is $(L_n^G, \lambda_n^G)$. By Lemma \ref{lem:up} $\rh_G(L^G, \lambda^G) = \lim_{n \rightarrow \infty} \rh_G(L_n^G, \lambda_n^G)$. The claim now follows by applying Proposition \ref{prop:finite} to each $(L_n^G, \lambda_n^G)$ and using the fact that $\sH(L_n, \lambda_n) = \sH(\cL_n)$ converges to $\sH(\cL) = \sH(L, \lambda)$.
\end{proof}

\begin{thm} \label{thm:kill}
Let $P$ be a countable group containing arbitrarily large finite subgroups. If $G$ is any countably infinite group with $\suprh{G} < \infty$ then $\suprh{P \times G} = 0$.
\end{thm}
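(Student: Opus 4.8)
The plan is to reduce the whole statement to Bernoulli shifts together with a single finite-index entropy drop. Write $M = \suprh{G} < \infty$. Since $P$ contains finite subgroups of unbounded order, it suffices to prove that for each $n$ and each finite subgroup $H \le P$ with $|H| = n$ one has $\suprh{P \times G} \le M/n$; letting $n \to \infty$ then forces $\suprh{P \times G} = 0$. The key preliminary observation is that for any countably infinite group $\Gamma'$, Bernoulli shifts realize the supremum defining $\suprh{\Gamma'}$: by Theorem \ref{thm:finite}, $\rh_{\Gamma'}(L^{\Gamma'},\lambda^{\Gamma'}) = \min(\sH(L,\lambda),\suprh{\Gamma'})$, and letting $\sH(L,\lambda) \to \infty$ shows $\suprh{\Gamma'} = \sup_{(L,\lambda)} \rh_{\Gamma'}(L^{\Gamma'},\lambda^{\Gamma'})$. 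Thus it is enough to bound the Rokhlin entropy of Bernoulli shifts, and I would factor the bound through the intermediate subgroup $H \times G$ by proving the two inequalities $\suprh{P \times G} \le \suprh{H \times G}$ and $\suprh{H \times G} \le \tfrac1n \suprh{G}$.

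For the first inequality, fix a finite $(L,\lambda)$ and view $L^{P\times G}$ as an $(H\times G)$-space. Since $H\times G$ is closed under left multiplication inside $P\times G$, the coordinates indexed by $H\times G$ form an $(H\times G)$-invariant block, so the coordinate projection is an $(H\times G)$-equivariant factor map onto the Bernoulli shift $L^{H\times G}$. Any $(H\times G)$-generating partition $\alpha$ of this factor pulls back to a partition of $L^{P\times G}$ of the same Shannon entropy, and because the $(P\times G)$-translates of $\salg_{H\times G}(\alpha) = \sigma(\text{coordinates in } H\times G)$ sweep out $\sigma(\text{coordinates in } pH\times G)$ over all $p\in P$, with $\bigcup_{p\in P} pH = P$, the pullback is a generating partition for $P\times G$. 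Hence $\rh_{P\times G}(L^{P\times G}) \le \rh_{H\times G}(L^{H\times G})$, and taking the supremum over $(L,\lambda)$ gives $\suprh{P\times G} \le \suprh{H\times G}$. This is the elegant ``infinite-index but easy'' direction, using only the product/coordinate structure.

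For the second inequality, fix a finite $(K,\lambda)$ and view $K^{H\times G}$ as a $G$-action: grouping coordinates by the $G$-orbits $\{h\}\times G$ exhibits it as the $G$-Bernoulli shift $(K^H)^G$, whose base $K^H$ has Shannon entropy $n\sH(K,\lambda)$. As $G$ is infinite this restricted action is still ergodic, so Theorem \ref{thm:finite} gives $\rh_G((K^H)^G) = \min(n\sH(K,\lambda),\,M)$. The crux is the finite-index inequality
$$\rh_{H\times G}(K^{H\times G}) \le \tfrac1n\,\rh_G\big((K^H)^G\big) = \min(\sH(K,\lambda),\,M/n) \le M/n,$$
which, taking the supremum over $(K,\lambda)$, yields $\suprh{H\times G} \le M/n$. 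To prove it I would start from a near-optimal $G$-generating partition $\gamma$ of $(K^H)^G$ with $\sH(\gamma) \approx \rh_G((K^H)^G)$ and construct an $(H\times G)$-generating partition $\alpha$ whose $H$-translates $\{h\cdot\alpha : h\in H\}$ are approximately mutually independent with $\bigvee_{h\in H} h\cdot\alpha$ refining $\gamma$; then $\salg_{H\times G}(\alpha) \supseteq \salg_G(\bigvee_{h} h\cdot\alpha) \supseteq \salg_G(\gamma)$ is the whole Borel algebra, while the approximate independence forces $\sH(\alpha) \approx \tfrac1n \sH(\bigvee_h h\cdot\alpha) \approx \tfrac1n\sH(\gamma)$.

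The construction of this ``$n$-th root of $\gamma$ under the finite subgroup $H$'' is the main obstacle, and it is precisely the finite-index entropy formula $\rh_G(Z) = n\cdot\rh_{H\times G}(Z)$ in the direction we need. I emphasize that the naive compression coming from Theorem \ref{thm:relbasic} — pushing the generating information onto an $H$-fundamental domain of measure $1/n$ — does \emph{not} achieve this: spreading the same information over a set of measure $1/n$ leaves the Shannon entropy near $\sH(\gamma)$ rather than $\sH(\gamma)/n$, so one obtains only the trivial bound $\suprh{H\times G}\le M$. The genuine factor-$n$ drop arises solely from making the $H$-translates independent, the phenomenon quantified by Theorem \ref{thm:drop} and realized by the approximately-Bernoulli machinery of Section \ref{sec:approx} (Theorem \ref{thm:fake} and Corollary \ref{cor:iso}) applied to the free ergodic action of $H\times G$ with the finite set $T = H$. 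Arranging the output to be simultaneously independent across $H$ and to refine $\gamma$ is the hard technical step; once it is in place, combining the two inequalities gives $\suprh{P\times G} \le \suprh{H\times G} \le M/n$ for arbitrarily large $n$, and therefore $\suprh{P\times G} = 0$.
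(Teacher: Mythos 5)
Your reduction to Bernoulli shifts via Theorem \ref{thm:finite}, and the easy inequality $\suprh{P \times G} \le \suprh{H \times G}$, are fine. But the heart of the matter --- the factor-$n$ drop $\suprh{H \times G} \le \suprh{G}/n$ --- is precisely the step you leave unproved, and the machinery you point to does not supply it. Theorem \ref{thm:fake} produces a generating partition with a \emph{prescribed} distribution $\dist(\cP)$ subject to the hypothesis $\rh_{H\times G}(X,\mu) < \sH(\cP \given \cQ)$, so the Shannon entropy of whatever it outputs is bounded \emph{below} by $\rh_{H\times G}(X,\mu)$; nothing in Section \ref{sec:approx} manufactures an ``$n$-th root of $\gamma$'' whose $H$-translates are independent, jointly refine $\gamma$, and carry no extra entropy. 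What you are asking for is the hard direction of the finite-index entropy formula, which is not established in this paper, so as written the argument has a genuine gap at its crucial step.

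More importantly, you explicitly reject the mechanism that actually closes this gap. The paper's proof \emph{does} obtain the drop by restricting to a $T$-fundamental domain $M$ of measure $1/|T|$; your objection that ``spreading the same information over a set of measure $1/n$ leaves the Shannon entropy near $\sH(\gamma)$'' fails because the partition being restricted is first arranged, via Krieger's theorem (Theorem \ref{intro:krieger}) applied to a free $G$-factor, to have at most $k$ pieces with $\log k$ just above $\suprh{G}$, and to be $T$-invariant (each class a union of $T$-orbits, using that $T$ and $G$ commute). Setting $\alpha = \{X \setminus M\} \cup (\beta \res M)$ then gives $\sH(\alpha \given \cF) \le \mu(M) \cdot \sH_M(\beta) \le \log(k)/|T|$ purely from the cardinality bound --- the entropy on $M$ is capped at $\log k$ no matter how the mass is arranged --- while no generating power is lost because the $T$-invariant $\beta$ is recovered from $\alpha \res M$ by translating across the $T$-orbit of $M$. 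Independence of translates plays no role in the paper's proof of this theorem; the essential ingredients are the finite generator theorem (to bound the number of pieces) and the $T$-invariance of the intermediate partition $\xi$, neither of which appears in your outline.
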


\begin{proof}
Set $\Gamma = P \times G$. Let $(L, \lambda)$ be a probability space with $L$ finite and $\sH(L, \lambda) > 0$, and consider the Bernoulli shift $(L^\Gamma, \lambda^\Gamma)$. By Theorem \ref{thm:finite} it suffices to show that $\rh_\Gamma(L^\Gamma, \lambda^\Gamma) = 0$.

Fix $\epsilon > 0$, fix $k \in \N$ with $\suprh{G} < \log(k)$, and fix a finite subgroup $T \leq P$ with $\log(k) / |T| < \epsilon$. Let $\cL = \{R_\ell : \ell \in L\}$ be the canonical partition of $L^\Gamma$, where
$$R_\ell = \{x \in L^\Gamma : x(1_\Gamma) = \ell\}.$$
Consider the partition $\cL^T$. We may write $\cL^T = \{D_\pi : \pi \in L^T\}$ where
$$D_\pi = \bigcap_{t \in T} t \cdot R_{\pi(t)}.$$
Since $T$ is a group, it naturally acts on $L^T$ by shifts: $(t \cdot \pi)(s) = \pi(t^{-1} s)$. For $u \in T$ we have $u \cdot D_\pi = D_{u \cdot \pi}$ since
$$u \cdot D_\pi = \bigcap_{t \in T} u t \cdot R_{\pi(t)} = \bigcap_{t \in T} t \cdot R_{\pi(u^{-1} t)} = D_{u \cdot \pi}.$$
Let $\cQ = \{Q_{[\pi]} : \pi \in L^T\}$ be the partition of $L^\Gamma$ where $[\pi]$ denotes the $T$-orbit of $\pi$ and
$$Q_{[\pi]} = \bigcup_{t \in T} D_{t \cdot \pi}.$$
Consider the restricted action $G \acts (L^\Gamma, \lambda^\Gamma)$ and let $G \acts (Z, \eta)$ be the factor associated to $\salg_G(\cQ)$. Since $T \cap G = \{1_\Gamma\}$, the $G$-translates of $\cQ$ are mutually independent. As $L^T$ has at least two distinct $T$-orbits, the action $G \acts (Z, \eta)$ is isomorphic to a $G$-Bernoulli shift and is in particular a free action.

By Theorem \ref{thm:robin}, there is a factor $\Gamma \acts (Y, \nu)$ of $(L^\Gamma, \lambda^\Gamma)$ such that $\rh_\Gamma(Y, \nu) < \epsilon$ and the action of $\Gamma$ on $Y$ is free. The $T$-orbits of $Y$ are finite and partition $Y$, so there is a Borel set $M' \subseteq Y$ which meets every $T$-orbit precisely once. Let $\cF$ be the $\Gamma$-invariant sub-$\sigma$-algebra of $L^\Gamma$ associated to $Y$, and let $M \in \cF$ be the pre-image of $M'$.

Define $\xi = \{C_\pi : \pi \in L^T\}$ to be the partition of $L^\Gamma$ defined by
$$C_\pi = \bigcup_{s \in T} s \cdot (D_\pi \cap M).$$
This is indeed a partition of $L^\Gamma$ since the $T$-translates of $M$ partition $L^\Gamma$ and the sets $D_\pi \cap M$ partition $M$. To add clarification to this definition, we remark that $x_1, x_2 \in L^\Gamma$ lie in the same class of $\xi$ if and only if $s_1^{-1} \cdot x_1$ and $s_2^{-1} \cdot x_2$ lie in the same class of $\cL^T$, where $s_1, s_2 \in T$ are defined by the condition $s_1^{-1} \cdot x_1, s_2^{-1} \cdot x_2 \in M$. We observe that $\salg_\Gamma(\xi) \vee \cF = \Borel(L^\Gamma)$ since for $\ell \in L$
\begin{align*}
R_\ell & = \bigcup_{\substack{\pi \in L^T\\\pi(1_\Gamma) = \ell}} D_\pi = \bigcup_{s \in T} \bigcup_{\substack{\pi \in L^T\\\pi(1_\Gamma) = \ell}} \Big( D_\pi \cap s \cdot M \Big) = \bigcup_{s \in T} \bigcup_{\substack{\pi \in L^T\\\pi(1_\Gamma) = \ell}} s \cdot (D_{s^{-1} \cdot \pi} \cap M)\\
 & = \bigcup_{s \in T} \bigcup_{\substack{\pi \in L^T\\\pi(s^{-1}) = \ell}} s \cdot (D_\pi \cap M) = \bigcup_{s \in T} \bigcup_{\substack{\pi \in L^T\\\pi(s^{-1}) = \ell}} \Big( C_\pi \cap s \cdot M \Big).
\end{align*}
Each $C_\pi \in \xi$ is $T$-invariant since for $u \in T$ and $\pi \in L^T$ we have
$$u \cdot C_\pi = \bigcup_{s \in T} (u s) \cdot (D_\pi \cap M) = C_\pi.$$
Furthermore, $\xi$ is finer than $\cQ$ as
\begin{align*}
Q_{[\pi]} & = \bigcup_{t \in T} D_{t \cdot \pi} = \bigcup_{s, t \in T} \Big( D_{t \cdot \pi} \cap s \cdot M \Big) = \bigcup_{s, t \in T} \Big( D_{s t \cdot \pi} \cap s \cdot M \Big)\\
 & = \bigcup_{s, t \in T} s \cdot (D_{t \cdot \pi} \cap M) = \bigcup_{s, t \in T} \Big( C_{t \cdot \pi} \cap s \cdot M \Big) = \bigcup_{t \in T} C_{t \cdot \pi}.
\end{align*}

Let $G \acts (W, \omega)$ be the factor of $(L^\Gamma, \lambda^\Gamma)$ associated to $\salg_G(\xi)$. Since $\xi$ is finer than $\cQ$, $(W, \omega)$ factors onto $(Z, \eta)$. Thus $G$ acts freely on $(W, \omega)$. We have $\rh_G(W, \omega) \leq \sH(\xi) < \infty$ and thus by assumption $\rh_G(W, \omega) \leq \suprh{G} < \log(k)$. Apply Theorem \ref{intro:krieger} to get a $k$-piece generating partition $\beta'$ for $W$, and let $\beta \subseteq \salg_G(\xi)$ be the pre-image of $\beta'$. Then $\xi \subseteq \salg_G(\beta)$ and hence
$$\Borel(L^\Gamma) = \salg_\Gamma(\xi) \vee \cF \subseteq \salg_\Gamma(\beta) \vee \cF.$$
We observed that every $C_\pi \in \xi$ is $T$-invariant. Since $G$ and $T$ commute, it follows that every set in $\salg_G(\xi)$ is $T$-invariant. In particular, each $B \in \beta$ is $T$-invariant. Therefore, setting
$$\alpha = \{L^\Gamma \setminus M\} \cup (\beta \res M),$$
we have $\beta \subseteq \salg_T(\alpha) \vee \cF$. Thus $\Borel(L^\Gamma) = \salg_\Gamma(\alpha) \vee \cF$. Therefore by sub-additivity
\begin{align*}
\rh_\Gamma(L^\Gamma, \lambda^\Gamma) & \leq \rh_\Gamma(Y, \nu) + \rh_\Gamma(L^\Gamma, \lambda^\Gamma \given \cF)\\
 & < \epsilon + \sH(\alpha \given \cF)\\
 & \leq \epsilon + \lambda^\Gamma(M) \cdot \sH_M(\alpha)\\
 & = \epsilon + \frac{1}{|T|} \cdot \sH_M(\beta)\\
 & \leq \epsilon + \frac{1}{|T|} \cdot \log(k)\\
 & < 2 \epsilon.
\end{align*}
Since $\epsilon > 0$ was arbitrary, we conclude that $\rh_\Gamma(L^\Gamma, \lambda^\Gamma) = 0$.
\end{proof}

\section{Rokhlin entropy of Bernoulli shifts: Infinite case} \label{sec:inf}

In this section we study the Rokhlin entropy of $(L^G, \lambda^G)$ when $\sH(L, \lambda) = \infty$. The key idea will be to combine the results of the previous section together with a formula for the Rokhlin entropy of an inverse limit of actions. We remark that there is a strong similarity between the formula we obtain, specifically Corollary \ref{cor:ks}, and the formula for sofic entropy via finite partitions developed by Kerr \cite{Ke13}.

Just as with the continuity properties of the previous section, we mention that the formula for the Rokhlin entropy of inverse limits and its consequences are developed and studied in greater detail in Part III \cite{AS}. The results in Part III will in particular cover actions which are not necessarily ergodic.

\begin{lem} \label{lem:outilim}
Let $G \acts (X, \mu)$ be a {\pmp} ergodic action. Suppose that $G \acts (X, \mu)$ is the inverse limit of actions $G \acts (X_n, \mu_n)$. Identify each $\Borel(X_n)$ as a sub-$\sigma$-algebra of $X$ in the natural way. Let $(\cF_n)_{n \in \N}$ be an increasing sequence of sub-$\sigma$-algebras with $\cF_n \subseteq \Borel(X_n)$ for every $n$, and set $\cF = \bigvee_{n \in \N} \cF_n$. If $\cP$ is a partition with $\cP \subseteq \Borel(X_n)$ for all $n$ and $\inf_{n \in \N} \sH(\cP \given \cF_n) < \infty$ then
$$\rh_{G, \mu}(\cP \given \cF) = \inf_{n \in \N} \rh_{G, \mu_n}(\cP \given \cF_n).$$
\end{lem}

Note that $\mu_n$ appears on the right-hand side of the above expression.

\begin{proof}
If $(X, \mu)$ has an atom then $X_n = X$ and $\cF_n = \cF$ for all sufficiently large $n$. So assume that $(X, \mu)$ is non-atomic. It is immediate from the definitions that $\rh_{G, \mu}(\cP \given \cF) \leq \inf_n \rh_{G, \mu_n}(\cP \given \cF_n)$. So we only need to consider the reverse inequality.

Note that $\rh_{G, \mu}(\cP \given \cF) \leq \sH_\mu(\cP \given \cF) < \infty$. Fix $\delta > 0$ and fix a countable partition $\xi'$ with $\sH_\mu(\xi' \given \cF) < \rh_{G, \mu}(\cP \given \cF) + \delta$ and $\cP \subseteq \salg_G(\xi') \vee \cF$. By Theorem \ref{thm:relbasic} there is a partition $\xi$ with $\sH_\mu(\xi) < \rh_{G, \mu}(\cP \given \cF) + \delta$ and $\cP \subseteq \salg_G(\xi) \vee \cF$. Since $\inf_k \sH_\mu(\cP \given \cF_k) < \infty$, by Lemma \ref{lem:shan} there are finite $T \subseteq G$ and $k \in \N$ such that
$$\sH_\mu(\cP \given \xi^T \vee \cF_k) < \delta.$$
Using the dense algebra $\bigcup_n \Borel(X_n)$, apply Lemma \ref{lem:algebra} to obtain $n \geq k$ and $\beta \subseteq \Borel(X_n)$ with $\dR_\mu(\beta, \xi) < \delta / (2 |T|)$. Then we have
\begin{align*}
\sH_\mu(\cP \given \salg_G(\beta) \vee \cF_n) & \leq \sH_\mu(\cP \given \beta^T \vee \cF_k)\\
 & \leq \sH_\mu(\cP \given \xi^T \vee \cF_k) + 2|T| \cdot \dR_\mu(\beta, \xi) < 2 \delta.
\end{align*}
Since $\cP, \beta \subseteq \Borel(X_n)$, the partition $\beta$ naturally corresponds to a partition of $X_n$ and the above inequality becomes
$$\sH_{\mu_n}(\cP \given \salg_G(\beta) \vee \cF_n) < 2 \delta.$$
Therefore by sub-additivity
\begin{align*}
\rh_{G, \mu_n}(\cP \given \cF_n) & \leq \rh_{G, \mu_n}(\beta \given \cF_n) + \rh_{G, \mu_n}(\cP \given \salg_G(\beta) \vee \cF_n)\\
 & \leq \sH_{\mu_n}(\beta) + \sH_{\mu_n}(\cP \given \salg_G(\beta) \vee \cF_n)\\
 & < \sH_{\mu_n}(\beta) + 2 \delta\\
 & < \sH_\mu(\xi) + 3 \delta\\
 & < \rh_{G, \mu}(\cP \given \cF) + 4 \delta.
\end{align*}
Now take the infimum over $n \in \N$ and let $\delta$ tend to $0$.
\end{proof}

\begin{cor} \label{cor:relup}
Let $G \acts (X, \mu)$ be a {\pmp} ergodic action, and let $(\cF_n)_{n \in \N}$ be an increasing sequence of sub-$\sigma$-algebras. Set $\cF = \bigvee_{n \in \N} \cF_n$.
\begin{enumerate}
\item[\rm (i)] $\rh_{G, \mu}(\cP \given \cF) = \inf_{n \in \N} \rh_{G, \mu}(\cP \given \cF_n)$ if $\cP$ is a partition with $\inf_n \sH(\cP \given \cF_n) < \infty$.
\item[\rm (ii)] $\rh_G(X, \mu \given \cF) = \inf_{n \in \N} \rh_G(X, \mu \given \cF_n)$ if the right-hand side is finite.
\end{enumerate}
\end{cor}

\begin{proof}
(i). This is immediate from Lemma \ref{lem:outilim} by taking each $(X_n, \mu_n) = (X, \mu)$.

(ii). Assume that the right-hand side is finite. Fix $k \in \N$ with $\rh_G(X, \mu \given \cF_k) < \infty$. Fix a partition $\cP$ satisfying $\sH(\cP \given \cF_k) < \infty$ and $\salg_G(\cP) \vee \cF_k = \Borel(X)$. Then $\salg_G(\cP) \vee \cF_n = \Borel(X)$ for all $n \geq k$, hence
$$\rh_G(X, \mu \given \cF) = \rh_{G, \mu}(\cP \given \cF) \quad \text{and} \quad \rh_G(X, \mu \given \cF_n) = \rh_{G, \mu}(\cP \given \cF_n) \text{ for } n \geq k.$$
Since $\inf_n \sH(\cP \given \cF_n) < \infty$, we can apply (i) to obtain
\begin{equation*}
\rh_G(X, \mu \given \cF) = \rh_{G, \mu}(\cP \given \cF) = \inf_{n \geq k} \rh_{G, \mu}(\cP \given \cF_n) = \inf_{n \geq k} \rh_G(X, \mu \given \cF_n).\qedhere
\end{equation*}
\end{proof}

Now we state the formula for the Rokhlin entropy of inverse limits.

\begin{thm} \label{thm:ks}
Let $G \acts (X, \mu)$ be a {\pmp} ergodic action and let $\cF$ be a $G$-invariant sub-$\sigma$-algebra. Suppose that $G \acts (X, \mu)$ is the inverse limit of actions $G \acts (X_n, \mu_n)$. Identify each $\Borel(X_n)$ as a sub-$\sigma$-algebra of $X$ in the natural way. Then
\begin{equation} \label{eqn:ks0}
\rh_G(X, \mu \given \cF) < \infty \Longleftrightarrow
\left\{\begin{array}{c}
\displaystyle{\inf_{n \in \N} \sup_{m \geq n} \rh_{G, \mu}(\Borel(X_m) \given \Borel(X_n) \vee \cF) = 0}\\
\displaystyle{\text{and} \quad \forall m \ \rh_{G, \mu}(\Borel(X_m) \given \cF) < \infty.}
\end{array}\right\}\end{equation}
Furthermore, when $\rh_G(X, \mu \given \cF) < \infty$ we have
\begin{equation} \label{eqn:ks}
\rh_G(X, \mu \given \cF) = \sup_{m \in \N} \rh_{G, \mu}(\Borel(X_m) \given \cF).
\end{equation}
\end{thm}

We remark that we do not know if (\ref{eqn:ks}) is true in general without assuming $\rh_G(X, \mu \given \cF) < \infty$.

\begin{proof}
First suppose that $\rh_G(X, \mu \given \cF) < \infty$. Then
$$\rh_{G, \mu}(\Borel(X_m) \given \cF) \leq \rh_G(X, \mu \given \cF) < \infty$$
for all $m \in \N$ and by applying Corollary \ref{cor:relup}.(ii) we get
\begin{align*}
0 = \rh_G(X, \mu \given \Borel(X)) & = \inf_{n \in \N} \rh_G(X, \mu \given \Borel(X_n) \vee \cF)\\
 & \geq \inf_{n \in \N} \sup_{m \geq n} \rh_{G, \mu}(\Borel(X_m) \given \Borel(X_n) \vee \cF) \geq 0.
\end{align*}
This proves one implication in the first claim.

Now suppose that the right-side of (\ref{eqn:ks0}) is true. For each $i \geq 1$ fix $n(i)$ with
$$\sup_{m \in \N} \rh_{G, \mu}(\Borel(X_m) \given \Borel(X_{n(i)}) \vee \cF) < \frac{\delta}{2^i}.$$
Then by using $m = n(i+1)$ we have
$$\rh_{G, \mu}(\Borel(X_{n(i+1)}) \given \Borel(X_{n(i)}) \vee \cF) < \frac{\delta}{2^i}.$$
Now by sub-additivity we have
\begin{align*}
\rh_G(X, \mu \given \cF) & \leq \rh_{G, \mu}(\Borel(X_{n(1)}) \given \cF) + \sum_{i=1}^\infty \rh_{G, \mu}(\Borel(X_{n(i+1)}) \given \Borel(X_{n(i)}) \vee \cF)\\
 & < \rh_{G, \mu}(\Borel(X_{n(1)}) \given \cF) + \delta.
\end{align*}
So $\rh_G(X, \mu \given \cF) < \infty$, completing the proof of the first claim. The second claim also follows, since above we only assumed that the right-side of (\ref{eqn:ks0}) was true (equivalently $\rh_G(X, \mu \given \cF) < \infty$ by the first claim). By letting $\delta$ tend to $0$ above, we get that $\rh_G(X, \mu \given \cF) \leq \sup_m \rh_{G, \mu}(\Borel(X_m) \given \cF)$. The reverse inequality is immediate from the definitions.
\end{proof}

Notice the the formula in the previous theorem relies upon outer Rokhlin entropies computed within the largest space $X$. When expressing $G \acts (X, \mu)$ as an inverse limit, it may be more natural to express the Rokhlin entropy of $(X, \mu)$ purely in terms of the actions which build the inverse limit. In order to express Rokhlin entropy in this way, we must assume that each $\rh_G(X_n, \mu_n \given \cF_n)$ is finite.

\begin{cor} \label{cor:ks}
Let $G \acts (X, \mu)$ be a {\pmp} ergodic action. Suppose that $G \acts (X, \mu)$ is the inverse limit of actions $G \acts (X_n, \mu_n)$. Identify each $\Borel(X_n)$ as a sub-$\sigma$-algebra of $X$ in the natural way. Let $(\cF_n)_{n \in \N}$ be an increasing sequence of sub-$\sigma$-algebras with $\cF_n \subseteq \Borel(X_n)$ for every $n$, and set $\cF = \bigvee_{n \in \N} \cF_n$. Assume that $\rh_G(X_n, \mu_n \given \cF_n) < \infty$ for all $n$. Then
\begin{equation*}
\rh_G(X, \mu \given \cF) < \infty \Longleftrightarrow \inf_{n \in \N} \sup_{m \geq n} \inf_{k \geq m} \rh_{G, \mu_k}(\Borel(X_m) \given \Borel(X_n) \vee \cF_k) = 0.
\end{equation*}
Furthermore, when $\rh_G(X, \mu \given \cF) < \infty$ we have
\begin{equation*}
\rh_G(X, \mu \given \cF) = \sup_{m \in \N} \inf_{k \geq m} \rh_{G, \mu_k}(\Borel(X_m) \given \cF_k).
\end{equation*}
\end{cor}

\begin{proof}
For each $m$ pick a partition $\alpha_m \subseteq \Borel(X_m)$ with $\sH(\alpha_m \given \cF_m) < \infty$ and $\Borel(X_m) = \salg_G(\alpha_m) \vee \cF_m$. Then by Lemma \ref{lem:outilim} we have
$$\rh_{G, \mu}(\Borel(X_m) \given \cF) = \rh_{G, \mu}(\alpha_m \given \cF) = \inf_{k \geq m} \rh_{G, \mu_k}(\alpha_m \given \cF_k) = \inf_{k \geq m} \rh_{G, \mu_k}(\Borel(X_m) \given \cF_k)$$
and by the same reasoning for every $n \leq m$
$$\rh_{G, \mu}(\Borel(X_m) \given \Borel(X_n) \vee \cF) = \inf_{k \geq m} \rh_{G, \mu_k}(\Borel(X_m) \given \Borel(X_n) \vee \cF_k).$$
So the corollary follows from the two identities above and Theorem \ref{thm:ks}.
\end{proof}

Now we proceed to consider Bernoulli shifts $(L^G, \lambda^G)$ with $\sH(L, \lambda) = \infty$. First we need a lemma.

\begin{lem} \label{lem:carve}
Let $(L, \lambda)$ be a probability space with $\sH(L, \lambda) = \infty$, and let $c > 0$. Then there exists a sequence of finite partitions $(\cL_n)_{n \in \N}$ with $\bigvee_{n \in \N} \salg(\cL_n) = \Borel(L)$ and
$$\sH \Big( \textstyle{ \cL_m \Given \bigvee_{n \neq m} \cL_n } \Big) > c$$
for all $m \in \N$.
\end{lem}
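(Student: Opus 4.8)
The plan is to build the partitions by hand, distinguishing according to whether $\lambda$ is purely atomic. Before doing so it is worth recording why the most obvious approach fails, since this dictates the whole strategy. One would like to write $(L,\lambda)$ as a countable product $\prod_n(A_n,\mu_n)$ of finite spaces, take $\cL_n$ to be the coordinate partitions, and use mutual independence to get $\sH(\cL_m\given\bigvee_{n\neq m}\cL_n)=\sH(\cL_n)>c$. But this is impossible for atomic $\lambda$: if every factor had entropy $>c$, then each $\mu_n$ would satisfy $\max_a\mu_n(a)\le 1-\delta$ for a fixed $\delta=\delta(c)>0$, forcing $\prod_n\max_a\mu_n(a)=0$ and hence $(L,\lambda)$ non-atomic. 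So in the atomic case the $\cL_n$ must be genuinely \emph{dependent}, and I will compute the conditional entropies directly from the disintegration formula of \S\ref{sec:prelim} rather than from independence.

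First I would treat the case where $\lambda$ is purely atomic, with atoms $\{x_i\}_{i\in I}$ of masses $p_i$; here $\sH(L,\lambda)=\infty$ means $\sum_i -p_i\log p_i=\infty$, and in particular $I$ is infinite. Since each summand satisfies $-p_i\log p_i\le 1/e<1$, I can greedily partition the index set into finite blocks $S_1,S_2,\dots$ with $\sum_{i\in S_k}(-p_i\log p_i)\ge c+1$ for every $k$; infinitude of the total forces infinitely many blocks, and every atom lands in exactly one. Let $\delta_k$ be the finite partition that isolates the atoms of $S_k$ as singletons and lumps everything else into one cell. The family $\{\delta_k\}$ separates all atoms, so $\bigvee_k\salg(\delta_k)=\Borel(L)$. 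The crucial point is that $\delta_m$ is the \emph{only} partition that splits the atoms of $S_m$: for $n\neq m$ the partition $\delta_n$ puts all of $S_m$ into its lumped cell, so in $\bigvee_{n\neq m}\delta_n$ the set $C_m:=\bigcup_{i\in S_m}\{x_i\}$ is a single cell (every other atom having been isolated by its own block). A direct computation with the cell-sum formula then gives
\[
\sH\Big(\delta_m\Given\bigvee_{n\neq m}\delta_n\Big)=r_m\,\sH_{C_m}(\delta_m)=\sum_{i\in S_m}\big(-p_i\log p_i\big)+r_m\log r_m,
\]
where $r_m=\sum_{i\in S_m}p_i$. Since $r_m\log r_m\ge -1/e$ and the block sum is $\ge c+1$, this is $\ge c+1-1/e>c$, as required.

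Next I would treat the case where $\lambda$ has a non-atomic part $L_c$ of measure $a:=\lambda(L_c)>0$ (this already forces $\sH(L,\lambda)=\infty$, so it covers all remaining situations), writing the atoms as $\{x_n\}_{n}$. Normalizing $\lambda$ on $L_c$ gives a standard non-atomic probability space, hence one isomorphic to a product $\prod_n(\{0,\dots,N-1\},\mathrm{unif})$ with $\log N\ge (c+1)/a$; let $\rho_n$ be the corresponding coordinate partitions, so the $\rho_n$ are mutually independent, jointly generate $\Borel(L_c)$, and each has normalized entropy $\ge(c+1)/a$. Now define $\cL_n$ to agree with $\rho_n$ on $L_c$ and to isolate the atom $x_n$ (lumping the remaining atoms) on $L\setminus L_c$; when there are only finitely many atoms I simply stop isolating once they are exhausted. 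These partitions generate: the $\rho_n$ recover $\Borel(L_c)$ and each $x_n$ is isolated by $\cL_n$. For the conditional entropy, the atomic cells contribute nothing (every $x_n$ is already a singleton of $\bigvee_{k\neq m}\cL_k$, since it is isolated by $\cL_n$), while on $L_c$ the conditioning partition restricts to $\bigvee_{k\neq m}\rho_k$; independence of the $\rho_k$ and the weighting by $a$ give $\sH(\cL_m\given\bigvee_{k\neq m}\cL_k)=a\cdot\sH_{\bar\mu_c}(\rho_m)\ge c+1>c$.

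The main obstacle is exactly the structural fact highlighted in the first paragraph: conditional entropy $>c$ across an independent family is incompatible with atoms, so one cannot reduce the atomic case to a product and must instead arrange that each $\cL_m$ is the unique member resolving a block of atoms and then evaluate the disintegration by hand. The two routine points to verify carefully are that $\bigvee_{n\neq m}\cL_n$ really collapses block $S_m$ (resp.\ leaves the conditioning on $L_c$ independent of $\cL_m$) with no spurious extra refinement from the lumped cells, and that generation holds across the atomic/non-atomic boundary; both follow from the fact that the atom-isolating cells and the $L_c$-cells never mix inside a common refinement.
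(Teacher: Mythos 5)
Your proof is correct and follows essentially the same two-case strategy as the paper: in the purely atomic case you block the atoms into finite groups of conditional-entropy-surplus at least $c$ and isolate each block with a single partition (the paper uses a $c+\log 2$ margin where you use $c+1-1/e$, and both verify that the complementary join collapses each block to one cell); in the remaining case you put an independent high-entropy family on the non-atomic part and lower-bound the conditional entropy by restricting there, exactly as in the paper. The only differences are cosmetic (an explicit product realization of the non-atomic part versus an abstract independent sequence, and isolating one atom per partition versus an arbitrary increasing generating sequence on the atomic remainder).
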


\begin{proof}
First suppose that $L$ is essentially countable. For $\ell \in L$ we will write $\lambda(\ell)$ for $\lambda(\{\ell\})$. Since
$$\sum_{\ell \in L} - \lambda(\ell) \cdot \log \lambda(\ell) = \sH(L, \lambda) = \infty,$$
we can partition $L$ into finite sets $I_n$ with
$$\sum_{\ell \in I_n} - \lambda(\ell) \cdot \log \lambda(\ell) > c + \log(2)$$
for all $n$. Define
$$\cL_n = \{L \setminus I_n\} \cup \Big\{ \{\ell\} : \ell \in I_n \Big\}.$$
Note that $\sH(\cL_n) > c + \log(2)$. Clearly $\cL_n$ is finite and $\bigvee_{n \in \N} \salg(\cL_n) = \Borel(L)$. Additionally, we have $I_n \in \bigvee_{k \neq n} \cL_k$ since $L \setminus I_n$ is the union of all singleton sets contained in $\bigvee_{k \neq n} \cL_k$. Therefore
\begin{align*}
\sH \Big( \textstyle{\cL_n \given \bigvee_{k \neq n} \cL_k} \Big) & = \sH(\cL_n \given \{I_n, L \setminus I_n\})\\
 & = \sH(\cL_n) - \sH(\{I_n, L \setminus I_n\})\\
 & \geq \sH(\cL_n) - \log(2)\\
 & > c.
\end{align*}

Now suppose that $(L, \lambda)$ is not essentially countable. Then $L$ decomposes into a non-atomic part $B \subseteq L$ and a purely atomic part $A \subseteq L$ with $\{B, A\}$ a partition of $L$ and $\lambda(B) > 0$. Fix any increasing sequence $\alpha_n$ of finite partitions of $A$ with $\Borel(L) \res A = \bigvee_{n \in \N} \salg(\alpha_n) \res A$. Choose a probability vector $\pv$ with $\mu(B) \cdot \sH(\pv) > c$, and let $\lambda_B$ be the normalized restriction of $\lambda$ to $B$. Since $B$ has no atoms, we can find a sequence of $\lambda_B$-independent ordered partitions $\beta_n$ of $B$ with $\dist_{\lambda_B}(\beta_n) = \pv$ for every $n$ and with $\Borel(L) \res B = \bigvee_{n \in \N} \salg(\beta_n) \res B$. Now set $\cL_n = \beta_n \cup \alpha_n$. Then $\cL_n$ is finite and $\Borel(L) = \bigvee_{n \in \N} \salg(\cL_n)$. Finally, since $\{B, A\}$ is coarser than every $\cL_n$ we have
\begin{align*}
\sH \Big( \textstyle{\cL_m \given \bigvee_{n \neq m} \cL_n} \Big) & \geq \lambda(B) \cdot \sH_B \Big( \textstyle{\cL_m \given \bigvee_{n \neq m} \cL_n} \Big)\\
 & = \lambda(B) \cdot \sH_B \Big( \textstyle{\beta_m \given \bigvee_{n \neq m} \beta_n} \Big)\\
 & = \lambda(B) \cdot \sH(\pv)\\
 & > c.\qedhere
\end{align*} 
\end{proof}

\begin{thm} \label{thm:infinite}
Let $G$ be a countably infinite group, and let $(L, \lambda)$ be a probability space with $\sH(L, \lambda) = \infty$. Then $\rh_G(L^G, \lambda^G) = \infty$ if and only if there is a free ergodic {\pmp} action $G \acts (X, \mu)$ with $\rh_G(X, \mu) > 0$.
\end{thm}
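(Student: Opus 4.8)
The statement has two directions, and the forward implication is immediate: since $G$ is infinite and the assumption $\sH(L,\lambda)=\infty$ forces $\lambda$ to be non-trivial, the Bernoulli shift $G \acts (L^G,\lambda^G)$ is free and ergodic. Hence if $\rh_G(L^G,\lambda^G)=\infty$ then $(L^G,\lambda^G)$ is itself a free ergodic {\pmp} action of positive (indeed infinite) Rokhlin entropy, which is exactly what is required. All of the content lies in the reverse implication, which I would prove directly: assuming a free ergodic action of positive Rokhlin entropy exists, I would show $\rh_G(L^G,\lambda^G)=\infty$.

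The first thing I would record is the \emph{entropy gap} coming from the finite-base theory. Applying Proposition \ref{prop:finite} (with trivial $\cK$) to the finite Bernoulli factors, together with Lemma \ref{lem:dfactor}, shows that no free ergodic action can have Rokhlin entropy strictly inside the interval $(\suprh{G},\infty)$; equivalently, any free ergodic action with $\rh_G>\suprh{G}$ must in fact have $\rh_G=\infty$. Since $(L^G,\lambda^G)$ is free and ergodic, it therefore suffices to prove the single inequality $\rh_G(L^G,\lambda^G)>\suprh{G}$ when $\suprh{G}<\infty$, and to prove $\rh_G(L^G,\lambda^G)=\infty$ outright when $\suprh{G}=\infty$. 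The hypothesis is used here only to guarantee $\suprh{G}>0$ (this is immediate when the witnessing action has finite Rokhlin entropy, and the infinite case can be reduced to it by passing to a suitable factor).

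Next I would exploit the infinite entropy of the base. Fixing $c>0$ and applying Lemma \ref{lem:carve}, I obtain finite partitions $\cL_n$ with $\bigvee_{n}\salg(\cL_n)=\Borel(L)$ and $\sH(\cL_m \given \bigvee_{n\neq m}\cL_n)>c$ for every $m$. The finite joins $\bigvee_{n\leq N}\cL_n$ are coarsenings of the canonical partition $\cL$ of $L^G$, so they correspond to Bernoulli factors $(M_N^G,\mu_N^G)$ whose base entropies $\sH(\bigvee_{n\leq N}\cL_n)$ tend to $\infty$, and Theorem \ref{thm:finite} evaluates $\rh_G(M_N^G,\mu_N^G)=\min\bigl(\sH(\bigvee_{n\leq N}\cL_n),\suprh{G}\bigr)$. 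The crux is to promote these finite-level values to a lower bound on $\rh_G(L^G,\lambda^G)$. The structural input is that, by Lemma \ref{lem:carve}, each block $\cL_m$ carries at least $\min(c,\suprh{G})>0$ of irreducible information even given all the others, so any finite-entropy generating partition of $L^G$ would have to encode infinitely many mutually irreducible blocks. I would aim to turn this into a contradiction by using the realization machinery of Section \ref{sec:rok}---Corollary \ref{cor:iso}, the upper-semicontinuity of Corollary \ref{cor:mups}, and the approximately-Bernoulli construction of Theorem \ref{thm:fake}---to manufacture from such a generating partition a free ergodic action whose Rokhlin entropy strictly exceeds $\suprh{G}$ while remaining finite, contradicting the very definition of $\suprh{G}$.

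The main obstacle is precisely this amplification step. Because Rokhlin entropy is known only to be subadditive (Corollary \ref{cor:add}) and not additive, one cannot simply sum the per-block contributions; moreover each finite Bernoulli factor $M_N^G$ is itself capped at $\suprh{G}$ by Theorem \ref{thm:finite}, so the divergence cannot be read off from any finite truncation and must instead be extracted from the genuinely infinite-dimensional nature of the base. Converting the uniform irreducibility of the blocks into a strict entropy excess over $\suprh{G}$, and doing so in a way that also covers a purely atomic base (where $(L,\lambda)$ need not split as a product), is the delicate heart of the argument; once it is in place, the remainder reduces to routine bookkeeping with the Shannon-entropy inequalities of Lemma \ref{lem:shan} and the subadditivity corollaries.
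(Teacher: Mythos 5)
Your reduction and your inventory of ingredients are on target --- the forward direction is indeed immediate, Lemma \ref{lem:carve} is the right source of uniformly irreducible blocks, and the goal is correctly identified as forcing $\rh_G(L^G,\lambda^G)$ past $\suprh{G}$. But the step you yourself flag as ``the delicate heart of the argument'' is exactly the step you have not supplied, and the machinery you propose for it (Corollary \ref{cor:iso}, Corollary \ref{cor:mups}, Theorem \ref{thm:fake}) is not what closes the gap: that realization/weak$^*$ apparatus is what proves Proposition \ref{prop:finite} itself, and it cannot by itself convert per-block irreducibility into divergence, precisely because --- as you note --- every finite truncation is capped at $\suprh{G}$.

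The paper's amplification works differently, through the \emph{relative} form of Proposition \ref{prop:finite} combined with the first clause of Theorem \ref{thm:ks}. Writing $\cL_{[0,k]}=\bigvee_{i\leq k}\cL_i$ and $\cL_{[0,k],m}=\bigvee_{i\leq k,\, i\neq m}\cL_i$, Proposition \ref{prop:finite} applied with $\cK=\cL_{[0,k],m}$ gives $\rh_G\big(L_{[0,k]}^G,\lambda_{[0,k]}^G\given\salg_G(\cL_{[0,k],m})\big)=\min\big(\sH(\cL_{[0,k]}\given\cL_{[0,k],m}),\suprh{G}\big)\geq\min(c,\suprh{G})=c$, where $c=\rh_G(Y,\nu)>0$ for a suitable free finite-entropy factor $Y$ of your witnessing action (extracting such a $Y$ from a witness of infinite Rokhlin entropy is itself done via Theorem \ref{thm:ks} and Corollary \ref{cor:kolm}, cf.\ Corollary \ref{cor:infjump}, and deserves more than the parenthetical wave you give it). Now if $\beta$ is any partition coarser than $\cL_{[0,k]}^T$ with $\sH(\cL_{[0,m]}\given\salg_G(\beta))<c/2$, Theorem \ref{thm:relbasic} supplies $\gamma$ with $\sH(\gamma)<c/2$ and $\cL_{[0,k]}\subseteq\salg_G(\beta\vee\gamma)\vee\salg_G(\cL_{[0,k],m})$, whence $c\leq\sH(\beta\given\salg_G(\cL_{[0,n]}))+c/2$, i.e.\ $\sH(\beta\given\salg_G(\cL_{[0,n]}))>c/2$ uniformly in $n<m\leq k$ and $T$. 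This says the \emph{first} expression of Theorem \ref{thm:ks} is at least $c/2>0$ for the filtration $(\cL_{[0,k]})_k$, and that clause of the theorem yields $\rh_G(L^G,\lambda^G)=\infty$ directly --- no auxiliary free action of contradictory entropy needs to be manufactured. The ``if the first expression is positive then the entropy is infinite'' alternative in Theorem \ref{thm:ks}, which your outline never invokes, exists precisely to carry this argument.
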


\begin{proof}
One implication is immediate: if $\rh_G(L^G, \lambda^G) = \infty$ then in particular $\rh_G(L^G, \lambda^G) > 0$. So suppose that $G \acts (X, \mu)$ is a free {\pmp} ergodic action with $\rh_G(X, \mu) > 0$. Let $(\alpha_n)$ be an increasing sequence of finite partitions of $X$ with $\Borel(X) = \bigvee_{n \in \N} \salg_G(\alpha_n)$. For each $n$ let $G \acts (X_n, \mu_n)$ be the factor of $(X, \mu)$ associated to $\salg_G(\alpha_n)$. Using Theorem \ref{thm:robin}, we may choose $\alpha_1$ so that $G$ acts freely on every $(X_n, \mu_n)$. By Theorem \ref{thm:ks}, there must be $n \in \N$ with $\rh_G(X_n, \mu_n) > 0$. Since also $\rh_G(X_n, \mu_n) \leq \sH(\alpha_n) < \infty$, we conclude that $\suprh{G} > 0$. Fix $c \in \R$ with $0 < c \leq \suprh{G}$.

Apply Lemma \ref{lem:carve} to get a sequence $\cL_n$ of finite non-trivial partitions of $L$ with $\Borel(L) = \bigvee_{n \in \N} \salg(\cL_n)$ and $\sH(\cL_m \given \bigvee_{n \neq m} \cL_n) \geq c$ for all $m$. For $m \leq k$ set
$$\cL_{[0,k]} = \bigvee_{0 \leq i \leq k} \cL_i \quad \text{and} \quad \cL_{[0,k],m} = \bigvee_{0 \leq i \neq m \leq k} \cL_i.$$
Note that for $k \geq m$ we have $\sH(\cL_{[0,k]} \given \cL_{[0,k],m}) \geq c$ by construction. We let $(L_{[0,k]}, \lambda_{[0,k]})$ denote the factor of $(L, \lambda)$ associated to $\cL_{[0,k]}$. Let $\cL = \{R_\ell : \ell \in L\}$ be the canonical (possibly uncountable) partition of $L^G$ defined by
$$R_\ell = \{w \in L^G : w(1_G) = \ell\}.$$
Note that $\Borel(L^G) = \cL^G$. We identify each of the partitions $\cL_m$, $\cL_{[0,k]}$, and $\cL_{[0,k],m}$ as coarsenings of $\cL \subseteq \Borel(L^G)$. Note that $(L_{[0,k]}^G, \lambda_{[0,k]}^G)$ is the factor of $(L^G, \lambda^G)$ associated to $\cL_{[0,k]}^G$. For all $n < m \leq k$ we have
$$\rh_{G, \lambda_{[0,k]}^G}(\cL_{[0,m]} \given \cL_{[0,n]}^G) \geq \rh_{G, \lambda_{[0,k]}^G}(\cL_m \given \cL_{[0,k],m}^G) = \rh_G(L_{[0,k]}^G, \lambda_{[0,k]}^G \given \cL_{[0,k],m}^G).$$
So by Proposition \ref{prop:finite} we have
\begin{align*}
\inf_{n \in \N} \sup_{m \geq n} \inf_{k \geq m} \rh_{G, \lambda_{[0,k]}^G}(\cL_{[0,m]}^G \given \cL_{[0,n]}^G) & \geq \sup_{m \in \N} \inf_{k \geq m} \rh_G(L_{[0,k]}^G, \lambda_{[0,k]}^G \given \cL_{[0,k],m}^G)\\
& = \sup_{m \in \N} \inf_{k \geq m} \min \Big( \sH(\cL_{[0,k]} \given \cL_{[0,k],m}), \ \suprh{G} \Big)\\
& \geq c > 0.
\end{align*}
Therefore $\rh_G(L^G, \lambda^G) = \infty$ by Corollary \ref{cor:ks}.
\end{proof}

\begin{cor} \label{cor:infjump}
Let $G$ be a countably infinite group. The following are equivalent:
\begin{enumerate}
\item[\rm (i)] $\suprh{G} > 0$;
\item[\rm (ii)] there is a free ergodic {\pmp} action with $0 < \rh_G(X, \mu) < \infty$;
\item[\rm (iii)] there is a free ergodic {\pmp} action with $\rh_G(X, \mu) = \infty$.
\end{enumerate}
\end{cor}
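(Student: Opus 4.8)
The plan is to establish the three-way equivalence by proving the cyclic chain $(iii) \Rightarrow (ii) \Rightarrow (i) \Rightarrow (iii)$, though in practice several of these implications are nearly immediate from the definitions and the major theorems already in hand. The implication $(ii) \Rightarrow (i)$ is trivial: if there is a free ergodic action with $0 < \rh_G(X, \mu) < \infty$, then this action is one of those over which the supremum defining $\suprh{G}$ is taken, so $\suprh{G} \geq \rh_G(X, \mu) > 0$. The substantive content lies in the other two implications.

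For $(i) \Rightarrow (iii)$, I would argue as follows. If $\suprh{G} > 0$, then by definition of the supremum there exists a free ergodic {\pmp} action $G \acts (X, \mu)$ with $0 < \rh_G(X, \mu) < \infty$, so in particular there is a free ergodic action of positive Rokhlin entropy. Now apply Theorem \ref{thm:infinite}: choose any probability space $(L, \lambda)$ with $\sH(L, \lambda) = \infty$, for instance a countably-supported base whose weights yield infinite Shannon entropy. Since a free ergodic action of positive Rokhlin entropy exists, Theorem \ref{thm:infinite} gives $\rh_G(L^G, \lambda^G) = \infty$. The Bernoulli shift $G \acts (L^G, \lambda^G)$ is free (as $G$ is infinite and the base is non-trivial) and ergodic, so it witnesses $(iii)$.

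For $(iii) \Rightarrow (ii)$, suppose there is a free ergodic action $G \acts (Y, \nu)$ with $\rh_G(Y, \nu) = \infty$. In particular $\rh_G(Y, \nu) > 0$, so by Theorem \ref{thm:infinite} we again have $\rh_G(L^G, \lambda^G) = \infty$ for any infinite-entropy base $(L, \lambda)$. The key step is then to extract from an infinite-Rokhlin-entropy action a free ergodic factor with positive \emph{finite} Rokhlin entropy. This I would obtain by running the construction inside the proof of Theorem \ref{thm:infinite}: that proof shows that from a free ergodic action of positive Rokhlin entropy one produces a free ergodic factor $G \acts (Y', \nu')$ with $0 < \rh_G(Y', \nu') \leq \sH(\alpha_m) < \infty$, where $\alpha_m$ is a finite partition. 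Applying this construction to $(Y, \nu)$ (whose positive entropy we have) yields precisely a free ergodic action with Rokhlin entropy strictly between $0$ and $\infty$, establishing $(ii)$.

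The main obstacle is the passage in $(iii) \Rightarrow (ii)$ from an \emph{infinite} entropy value down to a positive \emph{finite} one, since the supremum $\suprh{G}$ is taken only over actions of finite Rokhlin entropy and it is not \emph{a priori} clear that an infinite-entropy action admits finite-entropy free factors of positive entropy. I would handle this not by a direct truncation argument but by routing through Theorem \ref{thm:infinite} and reusing its internal construction, which manufactures a finite-entropy free factor of positive entropy from any positive-entropy starting point; the freeness is preserved because the constructed factor dominates a free factor supplied by Theorem \ref{thm:robin}. Alternatively, one could invoke Lemma \ref{lem:dfactor} applied to such a finite-entropy factor to interpolate entropy values, but the existence of at least one positive-finite value is all that $(ii)$ requires.
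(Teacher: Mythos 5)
Your proposal is correct and follows essentially the same route as the paper: the equivalence of (i) and (ii) is immediate from the definition of $\suprh{G}$, the implication to (iii) is Theorem \ref{thm:infinite} applied to an infinite-entropy base, and the passage from an infinite-entropy action back to a positive finite-entropy free factor is exactly the construction in the first paragraph of the proof of Theorem \ref{thm:infinite}, which is what the paper itself cites. Your observation that Lemma \ref{lem:dfactor} cannot be applied directly to an infinite-entropy action, and that one must instead reuse the internal factor construction, correctly identifies the one subtle point.
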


\begin{proof}
The equivalence of (i) and (ii) is by definition. Theorem \ref{thm:infinite} shows that (ii) implies (iii), and the implication (iii) implies (ii) was deduced in the first paragraph of the proof of Theorem \ref{thm:infinite}.
\end{proof}

We mention that if in Theorem \ref{thm:ks} the equation (\ref{eqn:ks}) holds without assuming $\rh_G(X, \mu \given \cF) < \infty$, then from a free ergodic action $G \acts (Y, \nu)$ with $\rh_G(Y, \nu) = \infty$ one could use the argument in the first paragraph of the proof of Theorem \ref{thm:infinite} to show that $(Y, \nu)$ has free factors with arbitrarily large but finite Rokhlin entropy values. From Corollary \ref{cor:infjump} it would then follow that $\suprh{G} > 0$ implies $\suprh{G} = \infty$.

\begin{cor}
Assume that every countably infinite group $G$ admits a free ergodic {\pmp} action with $\rh_G(X, \mu) > 0$. Then:
\begin{enumerate}
\item[\rm (i)] $\rh_G(L^G, \lambda^G) = \sH(L, \lambda)$ for all countably infinite groups $G$ and all probability spaces $(L, \lambda)$.
\item[\rm (ii)] All Bernoulli shifts over countably infinite groups have completely positive outer Rokhlin entropy.
\item[\rm (iii)] Gottschalk's surjunctivity conjecture and Kaplansky's direct finiteness conjecture are true.
\end{enumerate}
\end{cor}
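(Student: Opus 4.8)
The plan is to first upgrade the hypothesis $(\forall G\ \mathbf{POS})$ into the much stronger statement that $\suprh{G} = \infty$ for every countably infinite group $G$, and then to feed this into the Bernoulli-shift computations of Section~\ref{sec:rok} together with the surjunctivity and complete-positivity corollaries of Section~\ref{sec:trans}. Once the upgrade is in place, each of (i), (ii), (iii) reduces to a one-line application of an already-proved result.

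For the upgrade I would argue by contradiction. Suppose some countably infinite $G_0$ had $\suprh{G_0} < \infty$. Choose any countable group $P$ possessing arbitrarily large finite subgroups---for instance the restricted direct sum $\bigoplus_{n \in \N} \Z/2\Z$, which contains a copy of $(\Z/2\Z)^k$ for every $k$. Then $\Gamma := P \times G_0$ is again countably infinite, so Theorem~\ref{thm:kill} forces $\suprh{\Gamma} = 0$. The decisive point is that the hypothesis is quantified over \emph{all} countably infinite groups, so it applies to $\Gamma$ itself: $\Gamma$ admits a free ergodic {\pmp} action of positive Rokhlin entropy. By Corollary~\ref{cor:infjump} this existence is equivalent to $\suprh{\Gamma} > 0$, contradicting $\suprh{\Gamma} = 0$. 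Hence $\suprh{G} = \infty$ for every countably infinite $G$.

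Granting $\suprh{G} = \infty$, statement (i) follows by splitting on the Shannon entropy of the base. When $\sH(L, \lambda) < \infty$, Theorem~\ref{thm:finite} gives $\rh_G(L^G, \lambda^G) = \min\big(\sH(L,\lambda),\ \suprh{G}\big) = \sH(L,\lambda)$. When $\sH(L, \lambda) = \infty$, Theorem~\ref{thm:infinite} gives $\rh_G(L^G, \lambda^G) = \infty$ precisely because $G$ possesses a free ergodic action of positive Rokhlin entropy (the hypothesis $\mathbf{POS}$ applied to $G$), so $\rh_G(L^G, \lambda^G) = \infty = \sH(L,\lambda)$. This establishes (i) for every $G$ and every $(L,\lambda)$. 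With (i) secured, (ii) and (iii) are immediate: since $\rh_G(L^G, \lambda^G) = \sH(L, \lambda)$ for every probability space, Corollary~\ref{cor:cpe} yields that every Bernoulli shift over $G$ has completely positive outer Rokhlin entropy, which is (ii); and specializing (i) to $(L,\lambda) = (k, u_k)$ gives $\rh_G(k^G, u_k^G) = \log(k)$ for all $k$, so Corollary~\ref{cor:gott} yields Gottschalk's surjunctivity conjecture and Kaplansky's direct finiteness conjecture, which is (iii).

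The only genuinely substantive step is the contradiction argument of the second paragraph; everything afterward is bookkeeping across the cited theorems. The conceptual obstacle there is recognizing that Theorem~\ref{thm:kill} becomes a usable tool only because $\mathbf{POS}$ is a universal statement: it is invoked not for $G_0$ directly but for the auxiliary product $P \times G_0$, converting the collapse $\suprh{P \times G_0} = 0$ into a genuine contradiction with $\mathbf{POS}$ holding for $P \times G_0$. The remaining care is merely to ensure the chosen $P$ is countable with finite subgroups of unbounded order and that $P \times G_0$ remains countably infinite, both of which the restricted direct sum above satisfies.
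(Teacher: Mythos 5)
Your proposal is correct and follows the paper's own proof exactly: the paper likewise combines Theorem \ref{thm:kill} with Corollary \ref{cor:infjump} to conclude $\suprh{G} = \infty$ for every countably infinite $G$ (your contradiction via an auxiliary $P \times G_0$ is precisely the intended argument, just spelled out), then invokes Theorems \ref{thm:finite} and \ref{thm:infinite} for (i) and Corollaries \ref{cor:cpe} and \ref{cor:gott} for (ii) and (iii).
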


\begin{proof}
It follows from Corollary \ref{cor:infjump} and Theorem \ref{thm:kill} that $\suprh{G} = \infty$ for all countably infinite groups $G$. By applying Theorems \ref{thm:finite} and \ref{thm:infinite} we obtain (i). From Corollaries \ref{cor:gott} and \ref{cor:cpe} we obtain (ii) and (iii).
\end{proof}

\appendix
\section{Metrics on the space of partitions} \label{sec:metric}

Let $(X, \mu)$ be a probability space. Recall that the \emph{measure algebra} of $(X, \mu)$ is the algebra of equivalence classes of Borel sets mod null sets together with the metric $\dB_\mu(A, B) = \mu(A \symd B)$. There is a closely related metric $\dB_\mu$ on the space of all countable Borel partitions $\Prt$ defined by
$$\dB_\mu(\alpha, \beta) = \inf \Big\{ \mu(Y) \: Y \subseteq X \text{ and } \alpha \res (X \setminus Y) = \beta \res (X \setminus Y) \Big\}.$$
We will tend to work more frequently with the space $\HPrt$ of countable Borel partitions $\alpha$ satisfying $\sH(\alpha) < \infty$. In addition to the metric $\dB_\mu$, this space also has the \emph{Rokhlin metric} $\dR_\mu$ defined by
$$\dR_\mu(\alpha, \beta) = \sH(\alpha \given \beta) + \sH(\beta \given \alpha).$$

\begin{lem} \label{lem:dbarr}
Let $G$ be a countable group, let $G \acts (X, \mu)$ be a {\pmp} action, let $\cF$ be a $G$-invariant sub-$\sigma$-algebra, and let $\alpha, \beta, \xi \in \HPrt$. Then:
\begin{enumerate}
\item[\rm (i)] $\dR_\mu(\beta^T, \xi^T) \leq |T| \cdot \dR_\mu(\beta, \xi)$ for every finite $T \subseteq G$;
\item[\rm (ii)] $\dR_\mu(\alpha \vee \beta, \alpha \vee \xi) \leq \dR_\mu(\beta, \xi)$;
\item[\rm (iii)] $|\sH(\beta \given \cF) - \sH(\xi \given \cF)| \leq \dR_\mu(\beta, \xi)$;
\item[\rm (iv)] $|\sH(\alpha \given \beta \vee \cF) - \sH(\alpha \given \xi \vee \cF)| \leq 2 \cdot \dR_\mu(\beta, \xi)$.
\end{enumerate}
\end{lem}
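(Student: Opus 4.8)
The plan is to derive all five inequalities from two standard facts about Shannon entropy recorded in Lemma \ref{lem:shan}: the chain rule $\sH(\zeta \vee \eta \given \cF) = \sH(\eta \given \cF) + \sH(\zeta \given \salg(\eta) \vee \cF)$, and monotonicity of conditional entropy in the conditioning algebra (conditioning on a finer algebra can only decrease entropy). From the chain rule with trivial $\cF$ one gets subadditivity $\sH(\zeta \vee \eta) \leq \sH(\zeta) + \sH(\eta)$ and the pointwise bound $\sH(\zeta \given \eta') \leq \sH(\zeta \given \eta)$ whenever $\eta \leq \eta'$. I will use these repeatedly; since all partitions lie in $\HPrt$, every entropy appearing is finite and every join again lies in $\HPrt$, so the chain rule applies without integrability issues.

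For (iii) I would write $\sH(\beta) \leq \sH(\beta \vee \xi) = \sH(\xi) + \sH(\beta \given \xi)$, giving $\sH(\beta) - \sH(\xi) \leq \sH(\beta \given \xi)$; the symmetric computation bounds $\sH(\xi) - \sH(\beta) \leq \sH(\xi \given \beta)$, and together these yield $|\sH(\beta) - \sH(\xi)| \leq \dR_\mu(\beta, \xi)$. Part (iv) is identical with an extra fixed partition $\alpha$ in the conditioning: from $\sH(\beta \given \alpha) \leq \sH(\beta \vee \xi \given \alpha) = \sH(\xi \given \alpha) + \sH(\beta \given \xi \vee \alpha)$ and the monotonicity bound $\sH(\beta \given \xi \vee \alpha) \leq \sH(\beta \given \xi)$ one obtains $\sH(\beta \given \alpha) - \sH(\xi \given \alpha) \leq \sH(\beta \given \xi)$, and symmetrically for the reverse difference. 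For (ii), the key observation is that $\alpha$ is measurable with respect to $\salg(\alpha \vee \xi)$, so the chain rule collapses $\sH(\alpha \vee \beta \given \alpha \vee \xi) = \sH(\beta \given \alpha \vee \xi)$, and then $\sH(\beta \given \alpha \vee \xi) \leq \sH(\beta \given \xi)$ by monotonicity; adding the symmetric term gives (ii). For (i), I would expand $\beta^T = \bigvee_{t \in T} t \cdot \beta$, use subadditivity to get $\sH(\beta^T \given \xi^T) \leq \sum_{t \in T} \sH(t \cdot \beta \given \xi^T)$, then bound each term by $\sH(t \cdot \beta \given t \cdot \xi)$ using $t \cdot \xi \leq \xi^T$, and finally invoke measure-preservation of the $G$-action to rewrite $\sH(t \cdot \beta \given t \cdot \xi) = \sH(\beta \given \xi)$; summing the two symmetric halves produces the factor $|T|$.

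The only genuinely delicate part is (v), where the two conditioning partitions differ. My plan is to route through the common refinement $\beta \vee \xi$ and use the information-gain identity obtained by computing $\sH(\alpha \vee \xi \given \beta)$ via the chain rule in two orders: this yields $0 \leq \sH(\alpha \given \beta) - \sH(\alpha \given \beta \vee \xi) = \sH(\xi \given \beta) - \sH(\xi \given \alpha \vee \beta) \leq \sH(\xi \given \beta)$, and symmetrically $0 \leq \sH(\alpha \given \xi) - \sH(\alpha \given \beta \vee \xi) \leq \sH(\beta \given \xi)$. A triangle inequality through $\sH(\alpha \given \beta \vee \xi)$ then gives $|\sH(\alpha \given \beta) - \sH(\alpha \given \xi)| \leq \sH(\xi \given \beta) + \sH(\beta \given \xi) = \dR_\mu(\beta, \xi)$, which is in fact sharper than, and hence implies, the stated bound $2 \cdot \dR_\mu(\beta, \xi)$. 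The main thing to get right here is the direction of each monotonicity inequality and the bookkeeping of the two chain-rule expansions; no new idea beyond the chain rule is needed.
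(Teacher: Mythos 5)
Your proposal is correct, and for parts (i)--(iii) it follows the paper's argument essentially verbatim: subadditivity plus monotonicity of conditional entropy for (i), the collapse $\sH(\alpha \vee \beta \given \alpha \vee \xi) = \sH(\beta \given \alpha \vee \xi) \leq \sH(\beta \given \xi)$ for (ii), and the chain rule $\sH(\beta \vee \xi) = \sH(\xi) + \sH(\beta \given \xi)$ for (iii). The only divergence is in (iv) and (v): the paper derives both from (ii) and (iii) via the identities $\sH(\beta \given \alpha) = \sH(\alpha \vee \beta) - \sH(\alpha)$ and $\sH(\alpha \given \beta) = \sH(\alpha \vee \beta) - \sH(\beta)$ (the two applications of (ii)/(iii) in the latter case being the source of the factor $2$), whereas you argue directly with the chain rule. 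For (v) your route through the common refinement $\beta \vee \xi$ --- using that $\sH(\alpha \given \beta) - \sH(\alpha \given \beta \vee \xi) = \sH(\xi \given \beta) - \sH(\xi \given \alpha \vee \beta) \in [0, \sH(\xi \given \beta)]$ and its mirror image --- is sound and in fact yields the sharper bound $|\sH(\alpha \given \beta) - \sH(\alpha \given \xi)| \leq \dR_\mu(\beta, \xi)$, which trivially implies the stated inequality with constant $2$. Nothing in the paper depends on the weaker constant, so this is a harmless (indeed slightly stronger) substitute; your observation that joins of partitions in $\HPrt$ remain in $\HPrt$ correctly disposes of the only finiteness issue.
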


\begin{proof}
We have
$$\sH(\beta^T \given \xi^T) \leq \sum_{t \in T} \sH(t \cdot \beta \given \xi^T) \leq \sum_{t \in T} \sH(t \cdot \beta \given t \cdot \xi) = |T| \cdot \sH(\beta \given \xi),$$
where the final equality holds since $G$ acts measure-preservingly. This establishes (i). Item (ii) is immediate since $\sH(\alpha \vee \beta \given \alpha \vee \xi) = \sH(\beta \given \alpha \vee \xi) \leq \sH(\beta \given \xi)$. For (iii), we may assume that $\sH(\beta \given \cF) \geq \sH(\xi \given \cF)$. Then we have
$$\sH(\beta \given \cF) - \sH(\xi \given \cF) \leq \sH(\beta \vee \xi \given \cF) - \sH(\xi \given \cF) = \sH(\beta \given \xi \vee \cF) \leq \sH(\beta \given \xi) \leq \dR_\mu(\beta, \xi).$$
Item (iv) follows from (ii) and (iii) by using the identity $\sH(\alpha \given \beta \vee \cF) = \sH(\alpha \vee \beta \given \cF) - \sH(\beta \given \cF)$.
\end{proof}

In the next lemma we will use the well-known property \cite[Fact 1.7.7]{Do11} that for every $n \in \N$, the restrictions of $\dB_\mu$ and $\dR_\mu$ to the space of $n$-piece partitions are uniformly equivalent. Moreover, $\dB_\mu$ is always uniformly dominated by $\dR_\mu$, meaning that for every $\epsilon > 0$ there is $\delta > 0$ such that if $\alpha, \beta \in \HPrt$ and $\dR_\mu(\alpha, \beta) < \delta$ then $\dB_\mu(\alpha, \beta) < \epsilon$.

\begin{lem} \label{lem:coarse}
Let $G \acts (X, \mu)$ be a {\pmp} action. Let $T \subseteq G$ be finite, let $\alpha \in \HPrt$, and let $\beta$ be a coarsening of $\alpha^T$. For every $\epsilon > 0$ there is $\delta > 0$ so that if $\alpha' \in \HPrt$ and $\dR_\mu(\alpha', \alpha) < \delta$, then there is a coarsening $\beta'$ of $\alpha'^T$ with $\dR_\mu(\beta', \beta) < \epsilon$.
\end{lem}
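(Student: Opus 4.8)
The plan is to realize $\beta$ as a fixed ``coding'' of $\alpha^T$ and to define $\beta'$ by applying the very same coding to $\alpha'^T$. Concretely, since $\beta \leq \alpha^T$, there is a map $\Psi$ sending each atom of $\alpha^T$ to the atom of $\beta$ containing it; equivalently, writing $\alpha = \{A_i : i \in I\}$, the $\beta$-class of a point $x$ depends only on the tuple $(\text{index of the } \alpha\text{-class of } t^{-1} x)_{t \in T}$. Given $\alpha'$ with $\dR_\mu(\alpha', \alpha)$ small, I would first relabel the classes of $\alpha'$ by the same index set $I$ so that they match those of $\alpha$ off a small exceptional set, and then define $\beta'$ to be the coarsening of $\alpha'^T$ obtained by feeding the $\alpha'^T$-names into the same $\Psi$. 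By construction $\beta' \leq \alpha'^T$ and $\sH(\beta') \leq \sH(\alpha'^T) \leq |T| \cdot \sH(\alpha') < \infty$, so $\beta' \in \HPrt$.

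For the matching and the choice of exceptional set I would pass from $\dR_\mu$ to $\dB_\mu$: since $\dB_\mu$ is uniformly dominated by $\dR_\mu$ on $\HPrt$, small $\delta$ forces $\dB_\mu(\alpha', \alpha)$ small, so there is a set $Y$ with $\mu(Y)$ small on whose complement $\alpha'$ and $\alpha$ induce the same (labelled) partition. Translating by $T$, on the complement of $Z := \bigcup_{t \in T} t \cdot Y$ all the translates $t \cdot \alpha'$ and $t \cdot \alpha$ agree, hence $\alpha'^T = \alpha^T$ there and the two codings agree: $\beta' = \beta$ off $Z$, with $\mu(Z) \leq |T| \cdot \mu(Y)$.

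It remains to convert ``$\beta' = \beta$ off a small set $Z$'' into a bound on $\dR_\mu(\beta', \beta) = \sH(\beta \given \beta') + \sH(\beta' \given \beta)$. Cutting by the two-piece partition $\{Z, X \setminus Z\}$ and using that $\beta$ and $\beta'$ coincide on $X \setminus Z$, I get $\sH(\beta \given \beta') \leq \sH(\mu(Z), 1 - \mu(Z)) + \mu(Z) \cdot \sH_Z(\beta)$ and symmetrically $\sH(\beta' \given \beta) \leq \sH(\mu(Z), 1 - \mu(Z)) + \mu(Z) \cdot \sH_Z(\beta')$. The binary-entropy term vanishes as $\mu(Z) \to 0$, and the term $\mu(Z) \cdot \sH_Z(\beta)$ vanishes for the fixed finite-entropy partition $\beta$ by the elementary fact that $\sup_{\mu(Z) \leq \eta} \mu(Z) \cdot \sH_Z(\beta) \to 0$ as $\eta \to 0$ (split the atoms of $\beta$ into the finitely many large ones and the small ones, on which $-x \log x$ is increasing, and use that the tail of the convergent series $\sum_B -\mu(B) \log \mu(B)$ is small).

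The genuine obstacle is the term $\mu(Z) \cdot \sH_Z(\beta')$, since $\beta'$ varies with $\alpha'$ and could a priori carry large entropy on the small set $Z$; the fixed-partition estimate above does not apply uniformly. I would control it as follows. Since $\beta' \leq \alpha'^T$, one has $\mu(Z) \cdot \sH_Z(\beta') \leq \sum_{t \in T} \mu(t^{-1} Z) \cdot \sH_{t^{-1} Z}(\alpha')$, and for any set $W$ of small measure $\mu(W) \cdot \sH_W(\alpha') \leq \mu(W) \cdot \sH_W(\alpha) + \mu(W) \cdot \sH_W(\alpha' \given \alpha)$. The first summand is again handled by the fixed-partition estimate applied to $\alpha$, while the second is bounded by $\sH(\alpha' \given \alpha \vee \{W, X \setminus W\}) \leq \sH(\alpha' \given \alpha) \leq \dR_\mu(\alpha', \alpha) < \delta$, using monotonicity of conditional entropy under refinement of the conditioning algebra. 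Thus $\mu(Z) \cdot \sH_Z(\beta')$ is bounded by a quantity tending to $0$ with $\delta$, and assembling the pieces shows $\dR_\mu(\beta', \beta) \to 0$ as $\delta \to 0$, which yields the required $\delta$ for a given $\epsilon$. When $\alpha$ has finitely many pieces everything above simplifies: $\beta, \beta'$ then have a bounded number of pieces and the passage from the $\dB_\mu$-bound $\mu(Z)$ to a $\dR_\mu$-bound is immediate from the uniform equivalence of the two metrics on $n$-piece partitions, bypassing the obstacle entirely.
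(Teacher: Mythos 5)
Your argument is correct, but it takes a genuinely different route from the paper's. The paper's proof begins by replacing $\beta$ with a \emph{finite} $n$-piece coarsening $\beta_0$ satisfying $\dR_\mu(\beta_0, \beta) < \epsilon/2$ (available since $\sH(\beta) \leq |T| \cdot \sH(\alpha) < \infty$), and then works entirely at the level of $\dB_\mu$: choosing $\delta$ so that $\dB_\mu(\alpha', \alpha) < \kappa/|T|$, hence $\dB_\mu(\alpha'^T, \alpha^T) < \kappa$, one gets an $n$-piece coarsening $\beta'$ of $\alpha'^T$ agreeing with $\beta_0$ off a set of measure $\kappa$, and the uniform equivalence of $\dB_\mu$ and $\dR_\mu$ on $n$-piece partitions converts this back to a Rokhlin-metric bound with no entropy computation at all. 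You instead keep $\beta$ countable, define $\beta'$ by transporting the same coding $\Psi$ to $\alpha'^T$, and then must confront exactly the term that the finite-coarsening trick is designed to kill, namely $\mu(Z) \cdot \sH_Z(\beta')$ for a varying partition $\beta'$; your resolution --- bounding it by $\sum_{t \in T} \mu(t^{-1} Z) \cdot \sH_{t^{-1} Z}(\alpha')$ and then splitting via $\sH_W(\alpha') \leq \sH_W(\alpha) + \sH_W(\alpha' \given \alpha)$ together with $\mu(W) \cdot \sH_W(\alpha' \given \alpha) \leq \sH(\alpha' \given \alpha) < \delta$ --- is correct and is the genuinely nontrivial step of your proof. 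The trade-off: the paper's argument is shorter and outsources the analysis to the cited uniform equivalence of the two metrics on $n$-piece partitions, while yours needs only the one-directional domination of $\dB_\mu$ by $\dR_\mu$ and produces a $\beta'$ with the same combinatorial structure (same index set and coding) as $\beta$, a feature the paper's $\beta'$ does not have since it is forced to be finite. Your closing remark that the finite case bypasses the obstacle is precisely the observation the paper exploits from the outset by coarsening $\beta$ first.
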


\begin{proof}
By Lemma \ref{lem:shan}, there is a finite partition $\beta_0$ coarser than $\beta$ with $\dR_\mu(\beta_0, \beta) < \epsilon / 2$. Set $n = |\beta_0|$ and let $\kappa > 0$ be such that $\dR_\mu(\zeta, \zeta') < \epsilon / 2$ whenever $\zeta$ and $\zeta'$ are $n$-piece partitions with $\dB_\mu(\zeta, \zeta') < \kappa$. Let $\delta > 0$ be such that $\dB_\mu(\xi, \xi') < \kappa / |T|$ whenever $\xi, \xi' \in \HPrt$ satisfy $\dR_\mu(\xi, \xi') < \delta$. Now let $\alpha' \in \HPrt$ with $\dR_\mu(\alpha', \alpha) < \delta$. Then $\dB_\mu(\alpha', \alpha) < \kappa / |T|$ and hence $\dB_\mu(\alpha'^T, \alpha^T) < \kappa$. This means there is a set $Y \subseteq X$ with $\mu(Y) < \kappa$ and $\alpha'^T \res (X \setminus Y) = \alpha^T \res (X \setminus Y)$. Thus there is a $n$-piece coarsening $\beta'$ of $\alpha'^T$ with $\beta' \res (X \setminus Y) = \beta_0 \res (X \setminus Y)$. So $\dB_\mu(\beta', \beta_0) < \kappa$ and hence $\dR_\mu(\beta', \beta_0) < \epsilon / 2$. We conclude that $\dR_\mu(\beta', \beta) < \epsilon$.
\end{proof}

\begin{lem} \label{lem:algebra}
Let $(X, \mu)$ be a probability space, and let $(\mathcal{A}_n)_{n \in \N}$ be an increasing sequence of algebras of Borel sets whose union is $\dB_\mu$-dense in a sub-$\sigma$-algebra $\cF$. If $\beta \in \HPrt$, $\beta \subseteq \cF$, and $\epsilon > 0$ then there is $k \in \N$ and a partition $\beta' \subseteq \mathcal{A}_k$ with $\dR_\mu(\beta', \beta) < \epsilon$.
\end{lem}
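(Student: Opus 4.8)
The plan is to reduce first to a finite partition and then to the elementary metric $\dB_\mu$, where the $\dB_\mu$-density hypothesis can be applied directly. Since $\sH(\beta) < \infty$, Lemma \ref{lem:shan} provides a finite coarsening $\beta_0 \leq \beta$ with $\dR_\mu(\beta_0, \beta) < \epsilon/2$; as $\cF$ is a $\sigma$-algebra and each class of $\beta_0$ is a union of classes of $\beta \subseteq \cF$, we still have $\beta_0 \subseteq \cF$. By the triangle inequality in $\HPrt$ it therefore suffices to find a partition $\beta' \subseteq \mathcal{A}$ with $\dR_\mu(\beta', \beta_0) < \epsilon/2$.

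Write $\beta_0 = \{B_1, \dots, B_n\}$. Invoking the uniform equivalence of $\dB_\mu$ and $\dR_\mu$ on the space of $n$-piece partitions (the fact recalled just before Lemma \ref{lem:coarse}), fix $\kappa > 0$ so that any $n$-piece partition within $\dB_\mu$-distance $\kappa$ of $\beta_0$ lies within $\dR_\mu$-distance $\epsilon/2$ of $\beta_0$. Thus it is enough to construct an $n$-piece partition $\beta' \subseteq \mathcal{A}$ with $\dB_\mu(\beta', \beta_0) < \kappa$.

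For this, set $\delta = \kappa / (2 n^2)$ and, using the $\dB_\mu$-density of $\mathcal{A}$ in $\cF$, choose $A_i \in \mathcal{A}$ with $\mu(A_i \symd B_i) < \delta$ for each $1 \leq i \leq n$. The sets $A_i$ need not be disjoint, so I would repair them within the algebra: put $A_i' = A_i \setminus \bigcup_{j < i} A_j$ for $1 \leq i \leq n-1$ and $A_n' = X \setminus \bigcup_{j < n} A_j'$. Because $\mathcal{A}$ is an algebra containing $X$, each $A_i' \in \mathcal{A}$, and by construction $\beta' = \{A_1', \dots, A_n'\}$ is a genuine partition contained in $\mathcal{A}$. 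It then remains to check that $\dB_\mu(\beta', \beta_0)$ is small. For $i < n$ one uses that the $B_j$ are pairwise disjoint, so that $A_j \cap B_i \subseteq A_j \symd B_j$ whenever $j \neq i$, giving $\mu(A_i' \symd B_i) \leq n \delta$; for $i = n$ one uses that complementation preserves symmetric differences together with the identity $\bigcup_{j<n} A_j' = \bigcup_{j<n} A_j$ to obtain the same bound. Summing over the $n$ classes yields $\dB_\mu(\beta', \beta_0) \leq n^2 \delta < \kappa$, and the triangle inequality completes the argument.

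I expect the only genuinely delicate step to be the disjointification bookkeeping in the last paragraph: one must verify both that the repaired classes remain inside $\mathcal{A}$ (which is what forces the complementary definition of the final class $A_n'$) and that the accumulated measure error stays controlled, in particular for $A_n'$. Everything else—the reduction to a finite partition and the passage from $\dB_\mu$ to $\dR_\mu$—is routine given Lemma \ref{lem:shan} and the cited uniform equivalence.
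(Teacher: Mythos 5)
Your proposal is correct and follows essentially the same route as the paper's proof: pass to a finite coarsening $\beta_0$ via Lemma \ref{lem:shan}, then use the uniform equivalence of $\dB_\mu$ and $\dR_\mu$ on $n$-piece partitions to reduce to a $\dB_\mu$-approximation by an $n$-piece partition in $\mathcal{A}$. The only difference is that the paper treats the existence of such a partition as immediate from density, whereas you carry out the disjointification inside the algebra explicitly; your bookkeeping there is accurate.
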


\begin{proof}
By Lemma \ref{lem:shan} there is a finite partition $\beta_0$ coarser than $\beta$ with $\dR_\mu(\beta_0, \beta) < \epsilon / 2$. Set $n = |\beta_0|$ and let $\delta > 0$ be such that $\dR_\mu(\zeta, \zeta') < \epsilon / 2$ whenever $\zeta$ and $\zeta'$ are $n$-piece partitions with $\dB_\mu(\zeta, \zeta') < \delta$. Since the $\mathcal{A}_k$'s are increasing and have dense union in $\cF$ and since $\beta_0$ is finite, there is $k \in \N$ and a $n$-piece partition $\beta' \subseteq \mathcal{A}_k$ with $\dB_\mu(\beta', \beta_0) < \delta$. Then $\dR_\mu(\beta', \beta_0) < \epsilon / 2$ and $\dR_\mu(\beta', \beta) < \epsilon$.
\end{proof}

\begin{cor} \label{cor:gen}
Let $G \acts (X, \mu)$ be a {\pmp} action, let $\cF$ be a sub-$\sigma$-algebra, and let $\alpha$ be a partition with $\cF \subseteq \salg_G(\alpha)$. If $\beta \in \HPrt$, $\beta \subseteq \cF$, and $\epsilon > 0$, then there exists a finite $T \subseteq G$ and a coarsening $\beta'$ of $\alpha^T$ with $\dR_\mu(\beta', \beta) < \epsilon$.
\end{cor}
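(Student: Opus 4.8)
The plan is to deduce this from the preceding lemma by exhibiting a suitable algebra that is $\dB_\mu$-dense in $\cF$ and whose elements are each measurable with respect to $\alpha^T$ for some finite $T \subseteq G$. Concretely, I would let $\mathcal{A}$ be the algebra of all finite Boolean combinations of the translates $t \cdot A$, $t \in G$, $A \in \alpha$. Since these translates generate $\salg_G(\alpha)$ as a $\sigma$-algebra, and since the $\sigma$-algebra generated by an algebra lies in the $\dB_\mu$-closure of that algebra, $\mathcal{A}$ is $\dB_\mu$-dense in $\salg_G(\alpha)$; as $\cF \subseteq \salg_G(\alpha)$ it is in particular $\dB_\mu$-dense in $\cF$.

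The key structural observation is that every element of $\mathcal{A}$ is $\salg(\alpha^T)$-measurable for some finite $T \subseteq G$. Indeed, a given Boolean combination involves only the translates $t \cdot A$ with $t$ ranging over some finite set $T$ and $A \in \alpha$, and each such set $t \cdot A$ is a union of classes of $\alpha^T = \bigvee_{t \in T} t \cdot \alpha$; hence the whole Boolean combination is again a union of classes of $\alpha^T$, i.e.\ it is $\salg(\alpha^T)$-measurable.

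With $\mathcal{A}$ in hand, I would apply the preceding lemma to $\beta \subseteq \cF$ and $\epsilon$, obtaining a \emph{finite} partition $\beta' = \{B_i' : 0 \leq i < n\} \subseteq \mathcal{A}$ with $\dR_\mu(\beta', \beta) < \epsilon$. (The lemma does produce a finite partition, since its proof first replaces $\beta$ by a finite coarsening $\beta_0$ and then approximates $\beta_0$ by an $n$-piece partition inside $\mathcal{A}$.) For each $i$, fix a finite $T_i \subseteq G$ with $B_i' \in \salg(\alpha^{T_i})$ as in the previous paragraph, and set $T = \bigcup_{i} T_i$. Then every $B_i'$ is a union of classes of $\alpha^T$, so $\beta'$ is a coarsening of $\alpha^T$, as required.

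The only point requiring genuine care is this last step: one must exploit the finiteness of $\beta'$ to pass from the individual finite sets $T_i$ attached to its pieces to a single finite $T$ serving all of them simultaneously. Were the approximating partition allowed to be infinite, this amalgamation into one finite $T$ could fail, which is precisely why it is essential that the preceding lemma delivers a finite $\beta'$.
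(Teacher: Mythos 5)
Your proposal is correct and is essentially the paper's own argument: the paper likewise applies the preceding density lemma to the algebra generated by the translates $g \cdot A$, $g \in G$, $A \in \alpha$, which is $\dB_\mu$-dense in $\cF$ since its generated $\sigma$-algebra contains $\cF$. The only difference is that you spell out the (correct) final step of amalgamating the finitely many finite sets $T_i$ into one finite $T$, which the paper leaves implicit.
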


\begin{proof}
Pick an increasing sequence of finite sets $(T_n)_{n \in \N}$ with $\bigcup_n T_n = G$. Let $\mathcal{A}_n$ be the algebra generated by $\alpha^{T_n}$. Then $\bigcup_n \mathcal{A}_n$ is dense in $\cF$ since $\cF \subseteq \salg_G(\alpha)$. Now apply Lemma \ref{lem:algebra}.
\end{proof}

The same proof also provides the following.

\begin{cor} \label{cor:invgen}
Let $G \acts (X, \mu)$ be a {\pmp} action, let $\cF$ be a sub-$\sigma$-algebra, and let $(\alpha_n)$ be an increasing sequence of partitions with $\cF \subseteq \bigvee_{n \in \N} \salg_G(\alpha_n)$. If $\beta \in \HPrt$, $\beta \subseteq \cF$, and $\epsilon > 0$, then there exist $k \in \N$, a finite $T \subseteq G$, and a coarsening $\beta'$ of $\alpha_k^T$ with $\dR_\mu(\beta', \beta) < \epsilon$.
\end{cor}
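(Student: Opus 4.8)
The plan is to mirror the proof of Corollary \ref{cor:gen} and reduce to the preceding unnamed lemma about approximating partitions by a dense algebra. The only genuinely new ingredient is extracting a \emph{single} index $k$ and a \emph{single} finite set $T$ at the end, which is where the hypothesis that $(\alpha_n)$ is increasing gets used.

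First I would let $\mathcal{A}$ be the algebra of Borel sets generated by the translates $g \cdot A$ for $g \in G$, $A \in \alpha_n$, $n \in \N$. The $\sigma$-algebra generated by $\mathcal{A}$ is precisely $\bigvee_{n \in \N} \salg_G(\alpha_n)$, and since an algebra is always $\dB_\mu$-dense in the $\sigma$-algebra it generates, $\mathcal{A}$ is $\dB_\mu$-dense in $\bigvee_{n \in \N} \salg_G(\alpha_n)$. As $\cF \subseteq \bigvee_{n \in \N} \salg_G(\alpha_n)$, every element of $\cF$ is $\dB_\mu$-approximable by members of $\mathcal{A}$; note that this only requires approximating elements of $\cF$ and does not require $\mathcal{A} \subseteq \cF$, so $\mathcal{A}$ is $\dB_\mu$-dense in $\cF$ in the sense of that lemma. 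Applying the preceding lemma to $\beta \subseteq \cF$ then yields a finite partition $\beta' \subseteq \mathcal{A}$ with $\dR_\mu(\beta', \beta) < \epsilon$.

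It remains to realize $\beta'$ as a coarsening of $\alpha_k^T$ for one $k$ and one finite $T$. Since $\beta'$ has finitely many pieces and each piece lies in the algebra $\mathcal{A}$, each piece is a finite Boolean combination of finitely many generators $g \cdot A$ with $A \in \alpha_n$. Gathering these over all pieces, only finitely many pairs $(g, A)$ occur; let $T \subseteq G$ be the finite set of $g$'s appearing and let $k$ be the largest index $n$ appearing. Because the sequence $(\alpha_n)$ is increasing, each class $A \in \alpha_n$ with $n \leq k$ is a union of classes of $\alpha_k$, so every generator $g \cdot A$ with $g \in T$ is $\alpha_k^T$-measurable; hence every piece of $\beta'$ is $\alpha_k^T$-measurable, i.e. $\beta' \leq \alpha_k^T$ is a coarsening of $\alpha_k^T$. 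I expect the finiteness bookkeeping in this last paragraph, rather than any analytic estimate, to be the only point requiring care, and it is precisely the monotonicity of $(\alpha_n)$ together with the finiteness of $\beta'$ that makes a uniform choice of $k$ possible.
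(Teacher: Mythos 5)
Your proposal is correct and follows essentially the same route as the paper: the paper proves Corollary \ref{cor:invgen} by remarking that the proof of Corollary \ref{cor:gen} applies verbatim, namely that the algebra generated by the translates $g \cdot A$, $g \in G$, $A \in \alpha_n$, $n \in \N$, is $\dB_\mu$-dense in $\cF$, so the preceding approximation lemma applies. The final bookkeeping step you spell out (extracting a single $k$ and a single finite $T$ from the finitely many generators appearing in the finite partition $\beta'$, using monotonicity of $(\alpha_n)$) is exactly what the paper leaves implicit, and you handle it correctly.
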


\thebibliography{999}

\bibitem{AW13}
M. Ab\'{e}rt and B. Weiss,
\textit{Bernoulli actions are weakly contained in any free action}, Ergodic Theory and Dynamical Systems 23 (2013), no. 2, 323--333.

\bibitem{AS}
A. Alpeev and B. Seward,
\textit{Krieger's finite generator theorem for actions of countable groups III}, preprint. https://arxiv.org/abs/1705.09707.

\bibitem{AOP02}
P. Ara, K. C. O'Meara, and F. Perera,
\textit{Stable finiteness of group rings in arbitrary characteristic}, Advances in Math 170 (2002), no. 2, 224--238.

%\bibitem{Be} Hangeng Li says false
%F. Berlai,
%\textit{Groups satisfying Kaplansky's stable finiteness conjecture}, preprint. http://arxiv.org/abs/1501.02893.

\bibitem{B10b}
L. Bowen,
\textit{Measure conjugacy invariants for actions of countable sofic groups}, Journal of the American Mathematical Society 23 (2010), 217--245.

\bibitem{B12}
L. Bowen,
\textit{Sofic entropy and amenable groups}, Ergod. Th. \& Dynam. Sys. 32 (2012), no. 2, 427--466.

\bibitem{B12b}
L. Bowen,
\textit{Every countably infinite group is almost Ornstein}, Dynamical systems and group actions, 67--78, Contemp. Math., 567, Amer. Math. Soc., Providence, RI, 2012.

\bibitem{BV98}
M. Burger and A. Valette,
\textit{Idempotents in complex group rings: theorems of Zalesskii and Bass revisited}, Journal of Lie Theory 8 (1998), no. 2, 219--228.

\bibitem{CL13}
V. Capraro and M. Lupini, with an appendix by V. Pestov,
\textit{Introduction to sofic and hyperlinear groups and Connes' embedding conjecture}, to appear in Springer Lecture Notes in Mathematics.

\bibitem{Ch13}
N.-P. Chung,
\textit{Topological pressure and the variational principle for actions of sofic groups}, Ergodic Theory and Dynamical Systems 33 (2013), no. 5, 1363--1390.

\bibitem{Do11}
T. Downarowicz,
Entropy in Dynamical Systems. Cambridge University Press, New York, 2011.

\bibitem{ES04}
G. Elek and E. Szab\'{o},
\textit{Sofic groups and direct finiteness}, Journal of Algebra 280 (2004), 426--434.

\bibitem{Gl03}
E. Glasner,
\textit{Ergodic theory via joinings}. Mathematical Surveys and Monographs, 101. American Mathematical Society, Providence, RI, 2003. xii+384 pp.

\bibitem{Got}
W. Gottschalk,
\textit{Some general dynamical notions}, Recent Advances in Topological Dynamics, Lecture Notes in Mathematics 318 (1973), Springer, Berlin, 120--125.

\bibitem{GK76}
C. Grillenberger and U. Krengel,
\textit{On marginal distributions and isomorphisms of stationary processes}, Math. Z. 149 (1976), no. 2, 131--154.

\bibitem{Gr99}
M. Gromov,
\textit{Endomorphisms of symbolic algebraic varieties}, J. European Math. Soc. 1, 109--197.

\bibitem{Ka72}
I. Kaplansky,
\textit{Fields and rings}. The University of Chicago Press, Chicago, IL, 1972. Chicago Lectures in Mathematics.

\bibitem{K95}
A. Kechris,
Classical Descriptive Set Theory. Springer-Verlag, New York, 1995.

\bibitem{KST99}
A. Kechris, S. Solecki, and S. Todorcevic,
\textit{Borel chromatic numbers}, Adv. in Math. 141 (1999), 1--44.

\bibitem{Ke13}
D. Kerr,
\textit{Sofic measure entropy via finite partitions}, Groups Geom. Dyn. 7 (2013), 617--632.

\bibitem{Ke13a}
D. Kerr,
\textit{Bernoulli actions of sofic groups have completely positive entropy}, to appear in Israel Journal of Math.

\bibitem{KL11a}
D. Kerr and H. Li,
\textit{Entropy and the variational principle for actions of sofic groups}, Invent. Math. 186 (2011), 501--558.

\bibitem{KL13}
D. Kerr and H. Li,
\textit{Soficity, amenability, and dynamical entropy}, American Journal of Mathematics 135 (2013), 721--761.

\bibitem{KL11b}
D. Kerr and H. Li,
\textit{Bernoulli actions and infinite entropy}, Groups Geom. Dyn. 5 (2011), 663--672.

\bibitem{Ko58}
A.N. Kolmogorov,
\textit{New metric invariant of transitive dynamical systems and endomorphisms of Lebesgue spaces}, (Russian) Doklady of Russian Academy of Sciences 119 (1958), no. 5, 861--864.

\bibitem{Ko59}
A.N. Kolmogorov,
\textit{Entropy per unit time as a metric invariant for automorphisms}, (Russian) Doklady of Russian Academy of Sciences 124 (1959), 754--755.

\bibitem{Or70a}
D. Ornstein,
\textit{Bernoulli shifts with the same entropy are isomorphic}, Advances in Math. 4 (1970), 337--348.

\bibitem{Or70b}
D. Ornstein,
\textit{Two Bernoulli shifts with infinite entropy are isomorphic}, Advances in Math. 5 (1970), 339--348.

\bibitem{OW87}
D. Ornstein and B. Weiss,
\textit{Entropy and isomorphism theorems for actions of amenable groups}, Journal d'Analyse Math\'{e}matique 48 (1987), 1--141.

\bibitem{RW00}
D. J. Rudolph and B. Weiss,
\textit{Entropy and mixing for amenable group actions}, Annals of Mathematics (151) 2000, no. 2, 1119--1150.

\bibitem{Si59}
Ya. G. Sina\u{i},
\textit{On the concept of entropy for a dynamical system}, Dokl. Akad. Nauk SSSR 124 (1959), 768--771.

\bibitem{S14}
B. Seward,
\textit{Krieger's finite generator theorem for actions of countable groups I}, to appear in Inventiones Mathematicae. http://arxiv.org/abs/1405.3604.

\bibitem{S16}
B. Seward,
\textit{Weak containment, Pinsker algebras, and Rokhlin entropy}, preprint. https://arxiv.org/abs/1602.06680.

\bibitem{ST14}
B. Seward and R. D. Tucker-Drob,
\textit{Borel structurability on the $2$-shift of a countable group}, preprint. http://arxiv.org/abs/1402.4184.

\bibitem{St75}
A. M. Stepin,
\textit{Bernoulli shifts on groups}, Dokl. Akad. Nauk SSSR 223 (1975), no. 2, 300--302.

%\bibitem{Vi} Hanfeng Li says false
%S. Virili,
%\textit{A point-free approach to L-surjunctivity and stable finiteness}, preprint. http://arxiv.org/abs/1410.1643.

\bibitem{W00}
B. Weiss,
\textit{Sofic groups and dynamical systems}, Ergodic Theory and Harmonic Analysis (Mumbai, 1999). Sankhy\={a} Ser. A 62 (2000), 350--359.

\end{document}